\numberwithin{equation}{section}
\theoremstyle{plain}
\newtheorem{theorem}{Theorem}[section]
\newtheorem{proposition}[theorem]{Proposition}
\newtheorem{lemma}[theorem]{Lemma}
\newtheorem{corollary}[theorem]{Corollary}
  \theoremstyle{remark}
\newtheorem{remark}[theorem]{Remark}
  \theoremstyle{definition}
\newtheorem{definition}[theorem]{Definition}
 \newcommand{\<}{\left\langle}
\renewcommand{\>}{\right\rangle}
 \renewcommand{\(}{\left(}
\renewcommand{\)}{\right)}
\renewcommand{\[}{\left[}
\renewcommand{\]}{\right]}
\newcommand{\eps}{\varepsilon}
\newcommand{\e}{\varepsilon}
\newcommand{\bm}{{\beta_-}}
\newcommand{\bp}{{\beta_+}}
\begin{document}

\title[Sign-changing solutions for critical equations with Hardy potential]{Sign-changing solutions for critical equations with Hardy potential}

\author{Pierpaolo Esposito}
\address{Pierpaolo Esposito, Universit\'a degli Studi Roma Tre,\newline \indent  Dipartimento di Matematica e Fisica,\newline \indent  L.go S. Leonardo Murialdo  1, 00146 Roma, Italy}
\email{esposito@mat.uniroma3.it}

\author{Nassif Ghoussoub}
\address{Nassif Ghoussoub, University of British Columbia,\newline \indent 
Department of Mathematics
4176-2207 Main Mall,
\newline \indent Vancouver BC V6T 1Z4,
Canada}
\email{nassif@math.ubc.ca}

\author[A. Pistoia]{Angela Pistoia}
\address{Angela Pistoia \newline \indent Universit\`a di Roma ``La Sapienza'' \newline \indent
Dipartimento di Metodi e Modelli Matematici,  via Antonio Scarpa 16, \newline 
\indent 00161 Roma, Italy}
\email{angela.pistoia@uniroma1.it}

\author{Giusi Vaira}
\address{Giusi Vaira, Sapienza Universit\`a degli Studi della Campania ``Luigi Vanvitelli'',\newline \indent  Dipartimento di Matematica e Fisica, 
viale Lincoln 5, \newline \indent 81100 Caserta, Italy }
\email{giusi.vaira@unicampania.it}

\date{\today}
\subjclass[2010]{35A15; 35J20; 35J47}

\keywords{critical  problem, Hardy potential, linear perturbation, blow-up point.}

 \begin{abstract}
We consider the following perturbed critical Dirichlet problem involving the Hardy-Schr\"odinger  operator 
 on a smooth bounded domain $\Omega \subset \mathbb{R}^N$, $N\geq 3$, with $0 \in \Omega$:
$$ 
\left\{ \begin{array}{ll}-\Delta u-\gamma \frac{u}{|x|^2}-\epsilon u=|u|^{\frac{4}{N-2}}u &\hbox{in }\Omega\\
u=0 & \hbox{on }\partial \Omega, \end{array}\right. $$
when $\epsilon>0$ is small and $\gamma< {(N-2)^2\over4}$. Setting
$ \gamma_j=  \frac{(N-2)^2}{4}\left(1-\frac{j(N-2+j)}{N-1}\right)\in(-\infty,0]$ for $j \in \mathbb{N},$
we show that 
if $\gamma\leq \frac{(N-2)^2}{4}-1$ and $\gamma \neq \gamma_j$ for any $j$, then  for small $\epsilon$, the above equation has a positive --non variational-- solution that develops a bubble at the origin. 
If moreover $\gamma<\frac{(N-2)^2}{4}-4,$ then for any integer $k \geq 2$, the equation has for small enough $\epsilon$, a sign-changing solution that develops into a superposition of  $k$  bubbles with alternating sign centered at the origin. The above result is optimal in the radial case, where the condition that $\gamma\neq \gamma_j$ is not necessary. Indeed, it is known that, if $\gamma > \frac{(N-2)^2}{4}-1$  and $\Omega$ is a ball $B$, then there is no radial positive solution for $\eps>0$ small. We complete the picture here by showing that, if $\gamma\geq \frac{(N-2)^2}{4}-4$, then the above  problem has no radial sign-changing solutions for $\eps>0$ small. These results  recover and improve what is known in the non-singular case, i.e., when $\gamma=0$. 

\end{abstract}

\maketitle

\maketitle

\tableofcontents
\section{Introduction}
We consider existence issues for the following Dirichlet problem:
\begin{equation} \label{1422}
\left\{ \begin{array}{ll}-\Delta u-\gamma \frac{u}{|x|^2}-\lambda u=|u|^{\frac{4}{N-2}}u &\hbox{in }\Omega\\
u=0 & \hbox{on }\partial \Omega, \end{array}\right. 
\end{equation}
where $\Omega \subset \mathbb{R}^N$, $N\geq 3$, is a smooth bounded domain with $0 \in \Omega$, $\gamma< {(N-2)^2\over4}$ and $\lambda \in \mathbb{R}$. Problem \eqref{1422} is the Euler-Lagrange equation of the following action functional
$$J_{\lambda}(u)=\frac{1}{2}\int_\Omega |\nabla u|^2-\frac{\gamma}{2} \int_\Omega \frac{u^2}{|x|^2}-\frac{\lambda}{2}  \int_\Omega u^2-\frac{N-2}{2N} \int_\Omega |u|^{\frac{2N}{N-2}},\quad u \in H_0^1(\Omega).$$
Since $\frac{(N-2)^2}{4}$ is the best constant in the classical Hardy inequality:
$$\frac{(N-2)^2}{4}= \inf \left\{ \int_\Omega |\nabla u|^2:\ u \in H_0^1(\Omega)\hbox{ s.t. } \int_\Omega \frac{u^2}{|x|^2}=1\right\}$$
see \cite{HLP}, we have that
\begin{equation}\label{1448}
\int_\Omega |\nabla u|^2-\gamma \int_\Omega \frac{u^2}{|x|^2}\geq \(1-\frac{4\gamma}{(N-2)^2}\) \int_\Omega |\nabla u|^2 \qquad \forall \ u \in H_0^1(\Omega).
\end{equation}
It is then useful to equip the Hilbert space $H^1_0(\Omega)$ with the inner product
$$ \langle u,v \rangle  =\int_\Omega\nabla u\nabla v-\gamma\int_\Omega \frac{uv}{|x|^2},$$
and the assumption $\gamma<\frac{(N-2)^2}{4}$ guarantees that the induced norm $\|\cdot\|$ is equivalent to the usual one in view of \eqref{1448}. 
Letting $L_\gamma=-\Delta-\frac{\gamma}{|x|^2}$  be the Hardy operator, let us denote by $0<\lambda_1\leq \lambda_2\leq \dots $ the eigenvalues of $L_\gamma$.

\medskip \noindent For $\lambda < \lambda_1$ positive solutions of \eqref{1422} can be found through the minimization problem:
$$S_{\gamma,\lambda}(\Omega)=\inf \left\{ \|u\|^2-\lambda \int_\Omega u^2: \ u \in H_0^1(\Omega) \hbox{ s.t. }\int_\Omega |u|^{\frac{2N}{N-2}}=1\right\}.$$
When $\lambda=0$, it is classical to see that $S_{\gamma,0}(\Omega)=S_{\gamma,0}(\mathbb{R}^N)$ and is never attained, the difficulty being here that \eqref{1422}  is doubly critical 
for the presence of the Hardy potential $\frac{1}{|x|^2}$ and the nonlinearity $|u|^{\frac{4}{N-2}}u$. Extremals for $S_{\gamma,0}(\mathbb{R}^N)$ exist for $\gamma \geq 0$ and have the form (up to a multiplicative constant)
\begin{equation}\label{bubblebis}
U_\mu(x)=\mu^{-\frac{N-2}{2}} U\(\frac{x}{\mu}\)=\frac{\alpha_N \mu^{\Gamma  }}{|x|^{\beta_-}(\mu^{\frac{4\Gamma}{N-2}}+|x|^{\frac{4\Gamma}{N-2}})^{\frac{N-2}{2}}},\quad  \mu>0, \end{equation}
where
\begin{equation}\label{bubble0bis} U (x)=\frac{\alpha_N}{|x|^{\beta^-}(1+|x|^{\frac{4\Gamma}{N-2}})^{\frac{N-2}{2}}}=
\frac{\alpha_N}{\(|x|^{\frac2{N-2}\beta^-}+|x|^{\frac2{N-2}\beta^+}\)^{\frac{N-2}2}}
\end{equation} 
with 
\begin{equation}\label{betabis}
\Gamma=\sqrt{\frac{(N-2)^2}{4}-\gamma},\quad \beta_\pm=\frac{N-2}{2}\pm \Gamma, \quad \alpha_N=\[\frac{4 \Gamma^2 N}{N-2}\]^{\frac{N-2}{4}},
\end{equation}
see \cite{CaWa,ChCh,Ter}. For $\gamma<0$ the problem is even more difficult since $S_{\gamma,0}(\mathbb{R}^N)=S_{0,0}(\mathbb{R}^N)$ is not attained, even though \eqref{bubblebis} is still a family of positive solutions to  
\begin{equation} \label{1635bis}
-\Delta U-\gamma \frac{U}{|x|^2}=U^{\frac{N+2}{N-2}} \hbox{ in }\mathbb{R}^N\setminus \left\{0\right\}.
\end{equation}

\medskip \noindent As in the classical Br\'ezis-Nirenberg problem \cite{BrNi}, on a bounded domain $\Omega$ the presence of a linear perturbation with $0<\lambda<\lambda_1$ results in a symmetry breaking which is 
responsible for the existence of minimizers for $S_{\gamma,\lambda}(\Omega)$ \cite{GhRo,Jan,RuWi}. More precisely, a positive ground-state solution for \eqref{1422} is found when
\begin{itemize}
\item $\gamma\leq 0$ and either 
$$N=3 \hbox{ and the ``Robin'' function }R_{\gamma,\lambda}>0 \hbox{ somewhere }$$
or 
$$N\geq 4,\quad \lambda>|\gamma| \inf\left\{\frac{1}{|x|^2}: \ x \in \Omega \right\}$$
\item $0<\gamma\leq \frac{(N-2)^2}{4}-1$
\item  $\max\left\{0,\frac{(N-2)^2}{4}-1\right\}<\gamma <\frac{(N-2)^2}{4}$ and ``mass'' $m_{\gamma,\lambda}>0$.
\end{itemize}
The question has been completely settled in \cite{GhRo}, which we refer to for a precise definition of $R_{\gamma,\lambda}$ and $m_{\gamma,\lambda}$, and the ranges displayed above are essentially optimal for the attainability of $S_{\gamma,\lambda}(\Omega)$, see also the recent survey \cite{gr}. Notice that the cases $\gamma<0$ and $\gamma=0$, $N=3$ always require $\lambda$ to be sufficiently away from zero.

\medskip \noindent  By Pohozaev identity \cite{Poh} equation \eqref{1422} has no solution when $\lambda \leq 0$ on domains which are strictly starshaped w.r.t. $0$. Since  solutions of
\eqref{1422} can't have a given sign when $\lambda \geq \lambda_1$, to attack existence issues for general $\lambda$'s one needs to search for sign-changing 
solutions. We can summarize the available results in literature \cite{CaHa,CaPe,CaYa,Che,cz,fg} as:
\begin{itemize}
\item if $ 0\leq  \gamma <\frac{(N-2)^2}{4}-4$ there are infinitely many sign-changing solutions for all $\lambda>0$
\item if $\max\left\{0,\frac{(N-2)^2}{4}-4\right\} \leq \gamma <\frac{(N-2)^2}{4} -\frac{(N+2)^2}{N^2}$ there exists a sign-changing solution for all $\lambda\geq \lambda_1$
\item if $\max\left\{0,  \frac{(N-2)^2}{4}-\frac{(N+2)^2}{N^2} \right\} \leq \gamma \leq \frac{(N-2)^2}{4}-1$ there exists a sign-changing  solution for all $\lambda \in \displaystyle \bigcup_{k=1}^\infty (\lambda_k,\lambda_{k+1})$
\item if $\gamma \geq 0$ and $\frac{(N-2)^2}{4}-1< \gamma <\frac{(N-2)^2}{4}$ there exist $n_k$ sign-changing solutions for all $\lambda$ in a suitable left open neighborhood of $\lambda_k$, $k \geq 2$, where $n_k$ is the multiplicity of $\lambda_k$.
\end{itemize}
Assumption $\gamma \geq 0$ allows here to use $U_\mu$, which are extremals of $S_{\gamma,0}(\mathbb{R}^N)$, as an helpful family of test functions in a variational approach.

\medskip \noindent Hereafter, we restrict our attention to the regime $\lambda=\eps$, with $\eps >0$ small:
\begin{equation} \label{1422bis}
\left\{ \begin{array}{ll}-\Delta u-\gamma \frac{u}{|x|^2}-\eps u=|u|^{\frac{4}{N-2}}u &\hbox{in }\Omega\\
u=0 & \hbox{on }\partial \Omega. \end{array}\right. 
\end{equation}
When $\gamma=0$ $S_{0,\eps}(\Omega)$ is not achieved \cite{BrNi,Dru,Esp} for $N=3$, and \eqref{1422bis} in the ball $B=B_1(0)$ admits no positive solutions for $N=3$ \cite{BrNi} and no radial sign-changing solutions for $N=3,4,5,6$ \cite{AdYa,ABP}. In the singular case, a similar situation arises depending now on $\gamma$: $S_{\gamma,\eps}(\Omega)$ is not achieved 
\cite{GhRo} when either $\gamma<0$ or $\gamma>\frac{(N-2)^2}{4}-1 $, and \eqref{1422bis} in $B$ admits no radial positive solutions \cite{cl} for $\gamma>\frac{(N-2)^2}{4}-1$. Our first main result, along with Theorem \ref{torri} below, completes the picture in a radial setting:
\begin{theorem}\label{nonexistence} 
When $\gamma\geq \frac{(N-2)^2}{4}-4$ problem \eqref{1422bis} has no radial sign-changing solutions in $B$ for $\eps>0$ small.
\end{theorem}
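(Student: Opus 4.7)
The plan is to argue by contradiction. Suppose $\eps_n \to 0^+$ and $u_n = u_{\eps_n}$ are radial sign-changing solutions of \eqref{1422bis} in $B$, with internal zeros $0 < r^n_1 < \dots < r^n_{k_n - 1} < 1$ ($k_n \ge 2$). Set $\Gamma = \sqrt{(N-2)^2/4 - \gamma}$ and $\beta_\pm = (N-2)/2 \pm \Gamma$. The first step is to derive a Pohozaev-type identity on annular regions. Testing \eqref{1422bis} against both $x \cdot \nabla u$ and $u$ on $\{\rho < |x| < r\}$ and combining them so as to eliminate the volume integrals $\int |\nabla u|^2$, $\int u^2/|x|^2$ and $\int |u|^{2N/(N-2)}$, one obtains
\[
\eps \int_{\rho < |x| < r} u^2 \, dx = \Phi(r) - \Phi(\rho),
\]
where
\begin{multline*}
\Phi(s) = \frac{\omega_{N-1}}{2}\Bigl[s^{N} u'(s)^2 + (N-2)\, s^{N-1} u(s) u'(s) + \gamma\, s^{N-2} u(s)^2 \\
+ \eps\, s^{N} u(s)^2 + \tfrac{N-2}{N}\, s^{N} |u(s)|^{\frac{2N}{N-2}}\Bigr].
\end{multline*}
The admissible $H_0^1$ near-origin behaviour $u(s) \sim c\, s^{-\beta_-}$ produces as $s \to 0^+$ a leading coefficient $\beta_-^2 - (N-2)\beta_- + \gamma$ in $\Phi(s)$, which vanishes identically, so $\Phi(0^+) = 0$. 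Evaluating the identity between $0$ and each zero $r_i^n$ of $u_n$ (including $r = 1$) thus gives
\[
(r_i^n)^{N}\,(u_n'(r_i^n))^2 = \frac{2\eps_n}{\omega_{N-1}} \int_{|x| < r_i^n} u_n^2\, dx.
\]

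The second step is a blow-up analysis. Applied at $r = 1$, the identity yields $u'_n(1)^2 \to 0$ along any subsequence along which $u_n \rightharpoonup u_\ast$ in $H_0^1(B)$, so $u_\ast(1) = u'_\ast(1) = 0$, and radial ODE uniqueness forces $u_\ast \equiv 0$. Concentration must therefore occur, and a radial Struwe-type decomposition adapted to the Hardy-singular regime through the classification of positive radial solutions to \eqref{1635bis} recalled in \cite{ChCh,Ter} yields, up to a subsequence,
\[
u_n = \sum_{j=1}^{k} (-1)^{j+1}\, U_{\mu_j^n} + o(1) \quad \text{in } H_0^1(B), \qquad 0 < \mu_1^n \ll \mu_2^n \ll \dots \ll \mu_k^n \to 0,
\]
with $k \ge 2$. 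Matching, at the transition radius $r$, the far-field $U_{\mu_j^n}(r) \sim \alpha_N (\mu_j^n)^{\Gamma} r^{-\beta_+}$ of the inner bubble against the near-origin behaviour $U_{\mu_{j+1}^n}(r) \sim \alpha_N (\mu_{j+1}^n)^{-\Gamma} r^{-\beta_-}$ of the outer bubble pins each zero at $r_j^n \sim (\mu_j^n \mu_{j+1}^n)^{1/2}$, and a direct computation gives the transition estimate
\[
(r_j^n)^{N}\,(u_n'(r_j^n))^2 \sim 4\, \Gamma^2\, \alpha_N^2\,(\mu_j^n/\mu_{j+1}^n)^{\Gamma}.
\]

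Inserting this into the innermost Pohozaev identity ($i = 1$) yields the balance $(\mu_1^n / \mu_2^n)^{\Gamma} \asymp \eps_n \int_{|x| < r_1^n} u_n^2\, dx$. Using the explicit bubble \eqref{bubblebis}, the right-hand side is asymptotic to $\eps_n (\mu_1^n)^{2}$ when $\Gamma > 1$, to $\eps_n (\mu_1^n)^{2}\,|\log\mu_1^n|$ when $\Gamma = 1$, and to the tail contribution $\eps_n (\mu_1^n)^{\Gamma + 1}(\mu_2^n)^{1-\Gamma}$ when $\Gamma < 1$ (the slow decay of the innermost bubble dominating on $\{|x| < r_1^n\}$). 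In each of these three regimes, solving for $\mu_2^n$ and exploiting the standing constraint $\Gamma \le 2$ forces $\mu_2^n \to +\infty$ as $\eps_n \to 0^+$, contradicting $\mu_2^n \le 1$. The main obstacle will be the rigorous verification of the bubble-tower decomposition over the full Hardy-singular range $\gamma < (N-2)^2/4$: in particular for $\gamma < 0$, where $S_{\gamma,0}(\mathbb R^N)$ is not attained and one must work solely with the families of positive solutions to \eqref{1635bis} given by \eqref{bubblebis}, and for $\Gamma \le 1$, where the $L^2$-mass of a single bubble on $B$ no longer scales as $\mu^2$ and the three subcases above require a careful case-by-case concentration-compactness analysis.
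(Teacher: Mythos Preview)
Your overall strategy is sound and your heuristic computations are correct, but the route you take is genuinely different from the paper's. The paper never works with the Hardy bubbles $U_{\mu_j}$ in the asymptotic analysis. Instead it applies the Emden--Fowler change of variables \eqref{1209}, which converts the radial problem \eqref{1422bis} into the Hardy-free equation \eqref{1633}, namely $-\Delta v = |v|^{4/(N-2)}v + \eps |x|^\alpha v$ with $\alpha = (N-2)/\Gamma - 2$. The whole analysis is then a self-contained ODE blow-up argument (Lemma~\ref{lem1243}, Proposition~\ref{prop1807}, Theorem~\ref{thm1210}) showing that any sequence of sign-changing radial solutions forces $\alpha < (N-6)/2$, which is exactly $\Gamma > 2$. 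No Struwe decomposition is invoked: the bubble structure is \emph{derived} from the ODE, and the exclusion of multiple bubbles per nodal region is obtained from the Pohozaev identity \eqref{17577} (your $\Phi$ is the untransformed analogue of the boundary term there).

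What each approach buys: the paper's transformation collapses all three of your regimes $\Gamma>1$, $\Gamma=1$, $\Gamma<1$ into the single parameter $\alpha$, and the condition $\Gamma\le 2$ becomes the clean inequality $\alpha \ge (N-6)/2$; more importantly, it sidesteps entirely the obstacle you flag at the end. Your direct approach keeps the geometric meaning of the Hardy bubbles visible, and your Pohozaev balance $(\mu_1/\mu_2)^\Gamma \asymp \eps_n \int_{|x|<r_1^n} u_n^2$ is the correct untransformed version of the paper's relations \eqref{1150}--\eqref{1232}. However, the step where you write $u_n = \sum_j (-1)^{j+1} U_{\mu_j^n} + o(1)$ in $H_0^1$ is a real gap, not just a technicality: for $\gamma<0$ the profiles $U_\mu$ are not energy minimizers, so standard concentration--compactness does not hand you this decomposition, and you would in any case still need to rule out several bubbles of the same sign within one nodal region (the paper does this in Proposition~\ref{prop1807} via a pointwise bound $|v_n|\le C\,V_{\delta_j^n}$ obtained by a barrier argument). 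Your matched-asymptotics identities $r_j^n\sim\sqrt{\mu_j^n\mu_{j+1}^n}$ and $(r_j^n)^N(u_n'(r_j^n))^2\sim 4\Gamma^2\alpha_N^2(\mu_j^n/\mu_{j+1}^n)^\Gamma$ are exactly the untransformed versions of \eqref{1316} and \eqref{1811}, but turning them into rigorous statements requires precisely the pointwise control that the transformed ODE supplies for free.

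In short: your computations would close once the decomposition and the single-bubble-per-region facts are in hand, but proving those directly in the Hardy setting is essentially as much work as the paper's full Section~\ref{sec1}. The paper's transformation trades that work for a problem where the limiting profiles are the classical Aubin--Talenti bubbles and the entire argument becomes an ODE comparison.
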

\noindent Theorem \ref{nonexistence} is based on a fine asymptotic analysis combined with Pohozaev identities. In this way we also recover, see the precise statement in Corollary \ref{thm1211}, the results in \cite{AdYa,ABP} and \cite{cl} concerning the regular case $\gamma=0$ and the singular case $\gamma>\frac{(N-2)^2}{4}-1$, respectively. Moreover, when $\gamma<\frac{(N-2)^2}{4}-4$ the analysis shows that radial sign-changing solutions need to develop in a very precise way a bubble of alternating towers centered at $0$ as $\eps \to 0^+$, recovering and improving the discussion in \cite{Iac} concerning the asymptotics of radial least-energy sign-changing solutions in the regular case $\gamma=0$ when $N\geq 7$. Once the radial case is well understood, we can attack by  a perturbative approach the case of a general domain $\Omega$ leading to the following result, which is optimal in the radial case.
 \begin{theorem}\label{torri}  
 Let
 \begin{equation}\label{assgamma}
 \gamma_j=  \frac{(N-2)^2}{4}\left(1-\frac{j(N-2+j)}{N-1}\right)\in(-\infty,0]\ , \quad  j \in \mathbb{N}.
\end{equation}
 Assume  that either $\Omega$ is a general domain with $\gamma\not=\gamma_j$ for all $j \in \mathbb N$ or $\Omega$ is $j-$admissible (see Definition \ref{sym} and Remark \ref{adm}) with $\gamma =\gamma_j$ for some $j \in \mathbb N$.\\
i) Let  $\gamma\leq \frac{(N-2)^2}{4}-1.$
Then there exists $\eps_1>0$ such that for all $\eps \in (0, \eps_1)$ problem \eqref{1422bis} has a positive solution $u_\eps$ developing a bubble at the origin. \\
ii)  Let $\gamma<\frac{(N-2)^2}{4}-4.$ For any integer $k \geq 2$ there exists $\eps_k>0$ such that for all $\eps \in (0, \eps_k)$ problem \eqref{1422bis} has a sign-changing solution  $u_\eps$, which looks like the superposition of  $k$  bubbles with alternating sign centered at the origin. 
\end{theorem}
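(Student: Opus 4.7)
The plan is a Lyapunov–Schmidt finite-dimensional reduction built around the explicit family $U_\mu$ of \eqref{bubblebis} and its projection onto $H^1_0(\Omega)$. For $\mu>0$, let $PU_\mu\in H^1_0(\Omega)$ solve $L_\gamma PU_\mu=L_\gamma U_\mu$ in $\Omega$ with zero boundary values; a Green-function expansion gives $PU_\mu=U_\mu-\mu^{\bm}\mathcal{H}_\gamma+\text{l.o.t.}$, where $\mathcal{H}_\gamma$ is the regular part of the Green function of $L_\gamma$ at the origin. In part (i) the ansatz is $W_\eps=PU_\mu$ with $\mu=d\,\eps^{1/(2\Gamma-2)}$ (with logarithmic corrections at the borderline $\Gamma=1$). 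In part (ii) I would take
\[
W_\eps \;=\; \sum_{j=1}^{k}(-1)^{j-1}\, PU_{\mu_j}, \qquad \mu_j \;=\; d_j\, \eps^{\alpha_j},
\]
with free parameters $d_j>0$ in a compact set and exponents $0<\alpha_1<\cdots<\alpha_k$ chosen so that the interaction $\int U_{\mu_j}^{(N+2)/(N-2)}PU_{\mu_{j+1}}$ between adjacent bubbles matches the order of $\eps\int PU_{\mu_j}^2$. The assumption $\gamma<\frac{(N-2)^2}{4}-4$, i.e.\ $\Gamma>2$, is precisely what makes consecutive bubbles interact at an admissible order, mirroring the role of $N\geq 7$ in the classical Br\'ezis--Nirenberg tower construction.

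Setting $u=W_\eps+\phi$, the remainder satisfies an equation governed by the linearized operator $L_\eps\phi=L_\gamma\phi-\eps\phi-p\,|W_\eps|^{p-1}\phi$ with $p=\frac{N+2}{N-2}$. The next step is to develop a linear theory in weighted norms $\|\cdot\|_*,\|\cdot\|_{**}$ adapted to the algebraic decay of the $U_{\mu_j}$, and to prove invertibility of $L_\eps$ on the orthogonal complement of the approximate kernel $\mathrm{span}\{\mu_j\partial_{\mu_j}PU_{\mu_j}\}_{j=1}^k$. Via a contradiction/rescaling argument this reduces to classifying $\ker(L_\gamma-pU_1^{p-1})$ in $\mathcal{D}^{1,2}(\mathbb{R}^N)$: the scaling mode is always present, and by a spherical-harmonic decomposition one checks that an extra mode of degree $j$ appears exactly when $\gamma=\gamma_j$. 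For $\gamma\neq\gamma_j$ this is the entire kernel; for $\gamma=\gamma_j$ the $j$-admissibility hypothesis of Definition \ref{sym} forces all test functions, and hence $\phi$, to live in a symmetry subspace where these extra directions automatically vanish.

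With the linear theory in place, a contraction-mapping argument in $\|\cdot\|_*$ produces a unique remainder $\phi_\eps=\phi_\eps(d_1,\dots,d_k)$ whose norm is controlled by the ansatz error $\|L_\gamma W_\eps-\eps W_\eps-|W_\eps|^{p-1}W_\eps\|_{**}$. Substituting $W_\eps+\phi_\eps$ into $J_\eps$ and expanding term by term yields a reduced functional
\[
F_\eps(d_1,\dots,d_k) \;=\; k\, c(\gamma,N) \;-\; \sum_{j} \eps^{\tau_j}\, \Phi_j(d_1,\dots,d_k) \;+\; o\bigl(\eps^{\tau_j}\bigr),
\]
whose leading profile is driven, for each bubble, by a positive interaction-with-previous-bubble term competing with a negative linear-perturbation term. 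A computation inductive in $k$ produces a nondegenerate critical point $(d_1^*,\dots,d_k^*)\in(0,\infty)^k$ of this leading profile; being $C^1$-stable, it persists as a critical point of $F_\eps$ itself for $\eps$ small, and this critical point corresponds to a genuine solution $u_\eps=W_\eps+\phi_\eps$ of \eqref{1422bis}. The sign pattern of $W_\eps$ transfers to $u_\eps$ because $\phi_\eps$ is smaller than the peaks of each bubble.

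The main obstacle is the linear theory when $\gamma=\gamma_j$: the proliferation of non-radial Emden--Fowler zero-modes destroys invertibility of $L_\eps$ on the full space, and the $j$-admissibility of $\Omega$ has to be used to work entirely inside a symmetry subspace. A secondary technical difficulty is the bubble-interaction estimate in the tower: since the kernel of $L_\gamma$ behaves anisotropically like $|x|^{-\bm}$ near $0$, every interaction integral picks up explicit factors of $\Gamma,\bm,\bp,\alpha_N$ from \eqref{betabis}, and these must be tracked carefully both to fix the exponents $\alpha_j$ and to verify nondegeneracy of the reduced problem.
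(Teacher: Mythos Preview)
Your overall architecture is the same as the paper's: project the bubbles $U_{\mu_j}$ onto $H^1_0(\Omega)$, perform a Lyapunov--Schmidt reduction modulo the approximate kernel generated by the scaling mode, invoke the Dancer--Gladiali--Grossi classification of $\ker(L_\gamma-pU^{p-1})$ together with the $j$-admissibility hypothesis to kill the extra spherical-harmonic directions, and reduce to a finite-dimensional functional in $(d_1,\dots,d_k)$. The scaling $\mu_1=d\,\eps^{1/(2\Gamma-2)}$ for part (i) is correct, as is the identification of $\Gamma>2$ with the Br\'ezis--Nirenberg threshold $N\ge 7$.

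Two implementation choices differ from the paper and are worth flagging. First, you treat the remainder as a single $\phi$ solved in weighted norms $\|\cdot\|_*,\|\cdot\|_{**}$; the paper instead splits $\Phi=\sum_{\ell=1}^k\phi_\ell$ with $\phi_\ell\in\mathcal K_\ell^\bot$ depending only on $(d_1,\dots,d_\ell)$ and solves a \emph{hierarchical} system, working throughout in the $H^1_0$-norm supplemented by a local pointwise bound $|\phi_\ell|\le C\mu_\ell^{-\Gamma}|x|^{-\beta_-}$ near the origin. This splitting, which the paper calls ``crucial'', is what lets the reduced energy decompose as $\sum_\ell[\Upsilon_\ell(\mu_1,\dots,\mu_\ell)+\widetilde\Upsilon_\ell(\mu_1,\dots,\mu_\ell)]$ with each error confined to its own scale; that triangular structure is then exploited to locate the critical point. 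Second, you appeal to a nondegenerate critical point of the leading profile that persists under $C^1$-perturbation, whereas the paper does not prove nondegeneracy: it instead argues that the reduced functional attains a strict interior minimum on a well-chosen compact box, which only requires $C^0$-closeness of the error terms. Your route is in principle viable but would require you to actually verify nondegeneracy of the $k$-variable profile and $C^1$-smallness of the remainder, neither of which is automatic.

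One small correction: the projection expansion is $PU_\mu=U_\mu-\alpha_N\mu^{\Gamma}H_\gamma+O(\mu^{\frac{N+2}{N-2}\Gamma}|x|^{-\beta_-})$, with exponent $\Gamma$, not $\beta_-$; this matters for getting the mass term $A_2 m\,\mu_1^{2\Gamma}$ in the reduced energy right.
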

\noindent   Theorem \ref{torri}-(i) provides positive solutions of \eqref{1422bis} for $\gamma<0$ which are not minimizers for $S_{\gamma,\eps}(\Omega)$, exactly as $U_\mu$ are solutions of \eqref{1635bis} which are not extremals for $S_{\gamma,0}(\mathbb{R}^N)$. More generally, our result allows to consider the case $\gamma<0$ which cannot be dealt in a variational way when $\eps >0$ is small. When $ 0 \leq  \gamma <\frac{(N-2)^2}{4}-4$ the solutions we found likely coincide with the infinitely many ones found in \cite{CaYa,cz}. 

\medskip \noindent The paper is organized as follows. In Section \ref{sec1} we discuss the asymptotic behavior for radial solutions of problem \eqref{1422bis} in $B$ with $\epsilon \to 0^+$, establishing in particular the validity of Theorem \ref{nonexistence}. In Sections \ref{sec2} and \ref{sec3} we deduce Theorem \ref{torri} by developing a very delicate perturbative approach where a crucial splitting of the remainder term is performed, see \cite{IaVa,MPV} for related results. In the Appendix  \ref{sec4} we collect several technical estimates.

\section{Asymptotic analysis in the radial case: proof of Theorem \ref{nonexistence}} \label{sec1}
\noindent 
In this section we will consider the case when $\Omega$ is the unit ball $B$. From now on, for any function $u \in L^q(A)$, $1\leq q \leq +\infty$, we let $|u|_{q, A}=\left(\int_A |u|^qdx\right)^{1/q}$ and $|u|_q=|u|_{q,\Omega}$. We will denote by $c,C$ various positive constants which can vary from lines to lines.

\medskip \noindent Let $u \in H_0^1(B)$ be a radial solution of \eqref{1422}. The function
\begin{equation} \label{1209}
v(r)=(\frac{N-2}{2\Gamma})^{\frac{N-2}{2}} r^{\frac{N-2}{2}(\frac{N-2}{2\Gamma}-1)} u(r^{\frac{N-2}{2 \Gamma}})
\end{equation}
is in $H_0^1(B)$ and is a radial solution of
\begin{equation}\label{1633}
-\Delta v=|v|^{4\over N-2}v+\eps |x|^\alpha v  \hbox{ in }B \setminus \{0\},\ v=0 \hbox{ on } \partial B,\end{equation}
where $\alpha=\frac{N-2}{\Gamma}-2$ and $\eps=(\frac{N-2}{2\Gamma})^2  \lambda$. We have the following simple description of nodal regions:
\begin{lemma} \label{lem1243} Given $\alpha>-2$, any non-trivial radial solution $v \in H_0^1(B)$ of \eqref{1633} is in $C(\overline B) \cap C^2(\overline B \setminus \{0\})$ and, if $\epsilon>0$ and $v(0)>0$, there exist an integer $k=k(v) \geq 1$ and $R_0=r_1=0<R_1<r_2<\dots<R_{k-1}<r_k<R_k=r_{k+1}=1$ so that for all $j=1,\dots,k$
$$(-1)^{j-1} v>v(R_j)=0 \hbox{ in }(R_{j-1},R_j),\quad (-1)^j v'> v'(r_j)=0 \hbox{ in }(r_j,r_{j+1}),$$
with the convention $v'(0)=0$. Moreover, there exists $\eps_0>0$ small, independent on $v$, so that for all $0<\eps \leq \eps_0$ there holds
\begin{equation} \label{2145}
\int_{A}|v|^{\frac{2N}{N-2}} \geq ( \frac{S}{2})^{\frac{N}{2}}
\end{equation}
for any nodal region $A$ of $v$, where $S=S_{0,0}(\mathbb{R}^N)$ is the Sobolev constant.
\end{lemma}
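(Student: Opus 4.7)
The plan is to establish the three assertions in the lemma in order: interior and boundary elliptic regularity of $v$, the alternating structure of zeros and critical points as a function of the radius, and the universal energy lower bound \eqref{2145} on nodal regions, from which the finiteness of the nodal count follows at once.

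For regularity I would run a standard Brezis--Kato / Moser bootstrap on \eqref{1633}. Rewriting the equation as $-\Delta v = V v$ with $V = |v|^{4/(N-2)} + \eps |x|^\alpha$, both summands lie in $L^{N/2}(B)$: the first because $v \in L^{2N/(N-2)}(B)$, the second precisely because $\alpha > -2$ forces $|x|^\alpha \in L^q(B)$ for some $q > N/2$. Brezis--Kato then gives $v \in L^p(B)$ for every $p < \infty$, hence $v \in L^\infty(B)$ by a further application of elliptic regularity; this yields $v \in C^{0,\theta}(\overline B)$ and, by Schauder theory on compact subsets of $\overline B \setminus \{0\}$, $v \in C^2(\overline B \setminus \{0\})$.

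Next I would analyze the radial ODE
\begin{equation*}
v''(r) + \tfrac{N-1}{r} v'(r) + |v(r)|^{\frac{4}{N-2}}v(r) + \eps\, r^\alpha v(r) = 0 \quad\text{on } (0,1).
\end{equation*}
On any sub-interval of $(0,1]$ bounded away from $0$ the right-hand side is locally Lipschitz in $(v,v')$, so Picard--Lindel\"of uniqueness excludes the possibility $v(r_0)=v'(r_0)=0$ for any $r_0 \in (0,1]$; otherwise $v \equiv 0$, contradicting $v(0)>0$. Consequently the zeros of $v$ in $(0,1]$ are simple and hence isolated, and by Rolle's theorem they strictly alternate with the interior critical points. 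Since $v$ is continuous at the origin with $v(0)>0$, no zero of $v$ clusters at $r=0$ either, so the convention $v'(0)=0$ makes $r_1=0$ the initial critical point in the enumeration.

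The crux is \eqref{2145}. I would test \eqref{1633} against $v\chi_A \in H_0^1(B)$ for a nodal region $A$ (admissible since $v$ vanishes on $\partial A$), apply the Sobolev inequality to the gradient term, and H\"older with conjugate exponents $(N/(N-2),\, N/2)$ to the potential term:
\begin{equation*}
S\, I_A^{(N-2)/N} \leq \int_A |\nabla v|^2 = I_A + \eps \int_A |x|^\alpha v^2 \leq I_A + \eps\, C\, I_A^{(N-2)/N},
\end{equation*}
where $I_A := \int_A |v|^{2N/(N-2)}$ and the finiteness of $\int_B |x|^{\alpha N/2}$ (hence of the constant $C$ above) is exactly the condition $\alpha > -2$. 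Dividing by $I_A^{(N-2)/N}$ yields $S - \eps C \leq I_A^{2/N}$, so the threshold $\eps_0 := S/(2C)$, which depends only on $N$ and $\alpha$, gives $I_A \geq (S/2)^{N/2}$ for every nodal region $A$ and every $\eps \in (0,\eps_0]$. Combined with $\int_B |v|^{2N/(N-2)} < \infty$, the number of nodal regions is finite and $k(v)$ is well-defined. The main delicate point I foresee is precisely this $v$-independence of $C$, which is what makes the threshold $\eps_0$ universal as required by the statement.
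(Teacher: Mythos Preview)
Your regularity argument via Brezis--Kato and the derivation of the energy bound \eqref{2145} are correct and coincide with the paper's treatment (the paper spells out the splitting $|v|^{4/(N-2)}+\eps|x|^\alpha=f_1+f_2$ rather than citing Brezis--Kato by name, and identifies your constant $C$ explicitly as $\big||x|^\alpha\big|_{N/2}$).

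There is, however, a real gap in the middle step. You claim that the simple zeros of $v$ and its interior critical points ``strictly alternate by Rolle's theorem'', but Rolle only furnishes \emph{at least one} critical point between consecutive zeros; it does not exclude several, and a generic function can have two local maxima and a local minimum on a single nodal interval. What is missing is exactly where the hypothesis $\eps>0$ enters. On a nodal interval where $(-1)^{j-1}v>0$ the equation reads
\[
(r^{N-1}v')' = -\,r^{N-1}\bigl(|v|^{\frac{4}{N-2}}v+\eps\, r^\alpha v\bigr),
\]
and the right-hand side has the fixed sign $(-1)^j$. Hence $r^{N-1}v'$ is strictly monotone on each nodal interval and can vanish at most once; this is what forces a \emph{unique} critical point $r_j$ there and the fixed sign of $v'$ on $(r_j,r_{j+1})$. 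The paper encodes this monotonicity as the integral identity \eqref{1649} and uses it to build the sequence $(R_j,r_j)$ step by step. The same monotonicity near the origin, together with continuity of $v$ at $0$, gives $\lim_{r\to0}r^{N-1}v'(r)=0$ (otherwise $v$ blows up), which is what legitimises the convention $v'(0)=0$ and yields $v'<0$ on $(0,R_1)$; you pass over this point.

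A small logical slip at the end: you deduce finiteness of $k$ from \eqref{2145}, but that bound is only available once $\eps\le\eps_0$, whereas the lemma asserts the nodal structure for every $\eps>0$. You already have the correct argument earlier---isolated simple zeros in $(0,1]$, no accumulation at $0$ by continuity and $v(0)>0$, compactness of $[0,1]$---so invoke that instead.
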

\begin{proof} Since $\alpha>-2$, we have that  
\begin{equation} \label{1640}
|x|^\alpha \in L^p(B) \hbox{ for some }p>\frac{N}{2}.
\end{equation} 
Since by the Sobolev embedding theorem $v \in L^{\frac{2N}{N-2}}(B)$, for any $\eta>0$ we can decompose $|v|^{\frac{4}{N-2}}+\eps |x|^\alpha$ as $f_1+f_2$ with $|f_1|_{\frac{N}{2}}\leq \eta$ and $f_2 \in L^\infty(B)$ in view of \eqref{1640}. We can re-write \eqref{1633} as
$$v-(-\Delta)^{-1}(f_1 v)=(-\Delta)^{-1}(f_2v).$$
By elliptic regularity theory and the Sobolev embedding $W^{2,\frac{Ns}{N+2s}}(B) \hookrightarrow L^s(B)$ we have that
\begin{equation} \label{1046}
|(-\Delta)^{-1}(f_1 v)|_s \leq C \|(-\Delta)^{-1}(f_1 v)\|_{W^{2,\frac{Ns}{N+2s}}}\leq C  |f_1 v|_{\frac{Ns}{N+2s}} \leq C  \eta |v|_s
\end{equation}
in view of the H\"older's inequality  and $|f_1|_{ \frac{N}{2}} \leq \eta$. Equivalently $H:v \in L^s(B) \to (-\Delta)^{-1}(f_1v) \in L^s(B)$ has operatorial norm $\leq C \eta$, and then the operator $\hbox{Id}-H:L^s(B)\to L^s(B)$ is invertible for all $s>1$ and $\eta$ sufficiently small. Arguing as in \eqref{1046}, we have that 
$$|v|_{\frac{Ns}{N-2s}} \leq \|(\hbox{Id}-H)^{-1} \| |(-\Delta)^{-1}(f_2 v)|_{\frac{Ns}{N-2s}} \leq  C |f_2 v|_s \leq C |f_2|_\infty |v|_s$$
when $s<\frac{N}{2}$ and for all $q>1$
$$|v|_q \leq \|(\hbox{Id}-H)^{-1} \| |(-\Delta)^{-1}(f_2 v)|_q \leq  C |f_2 v|_s \leq C |f_2|_\infty |v|_s$$
when $s\geq \frac{N}{2}$. Starting from $v \in L^{\frac{2N}{N-2}}(B)$ we iteratively prove that $v \in L^s(B)$ for all $s>1$, and then $|v|^{\frac{4}{N-2}}v+\eps |x|^\alpha v \in L^{\frac{N+2p}{4}}(B) \cap L^s_{\hbox{loc}}(\overline B \setminus \{0\})$ for all $s>1$, where $p$ is given in \eqref{1640}. Since $\frac{N+2p}{4}>\frac{N}{2}$, by elliptic regularity theory we deduce that $v \in C(\overline B) \cap C^2(\overline B \setminus \{0\})$. Moreover, we claim that
\begin{equation} \label{1107}
\lim_{r \to 0}r^{N-1} v'(r)=0.
\end{equation}
Indeed, let us write equation \eqref{1633} in radial coordinates as 
\begin{equation} \label{1105}
-\frac{1}{r^{N-1}}(r^{N-1}v')'=|v|^{\frac{4}{N-2}}v+ \eps |x|^\alpha v \qquad r \in (0,1).
\end{equation}
Since $v$ is non-trivial, then $v(0)\not=0$ and then, by continuity of $v$, the R.H.S. in \eqref{1105} has a given sign near $0$. By \eqref{1105} we deduce that the function $r^{N-1}v'(r)$ is monotone in $r$ and then has limit as $r \to 0$: $\displaystyle \lim_{r \to 0} r^{N-1}v'(r)=l$. However, $l\not=0$ would imply a discontinuity of $v$ at $0$, and then \eqref{1107} is established.

\medskip \noindent Take $\epsilon>0$ and assume w.l.o.g. $v(0)>0$. Given $R$ so that $\displaystyle \lim_{r \to R} r^{N-1} v'(r)=0$, observe that the integration of \eqref{1105} in $(R,r)$ gives 
\begin{eqnarray}
\label{1649}
v'(r)=-\frac{
1}{r^{N-1}} \int_{R}^r s^{N-1}(|v|^{\frac{4}{N-2}}v+\epsilon s^\alpha v)ds 
\end{eqnarray}
for all $r >0$. Since $v(0)>0$ and $v'<0$ near $0$ in view of \eqref{1649} with $R=0$, let us define
$$R_1=\sup \{r \in (0,1):\ v>0 \hbox{ in }(R_0,r)\},\quad r_2=\sup\{r\in (0,1):v'<0 \hbox{ in }(r_1,r) \}.$$
If $R_1=1$, then $r_2=1$ and the choice $k=1$ completes the proof. If $R_1<1$, by \eqref{1649} with $R=0$ and $v(1)=0$ we deduce that $R_1<r_2<1$, $v'(r_2)=0$ and
$$v>v(R_1)=0 \hbox{ in }(R_0,R_1),\quad v'<v'(r_1)=0 \hbox{ in }(r_1,r_2).$$
In an iterative way, for $i \geq 2$ assume to have found $R_0=r_1=0<R_1<r_2<\dots<R_{i-1}<r_i<1$ so that $v'(r_i)=0$ and for all $j=1,\dots,i-1$
$$(-1)^{j-1} v>v(R_j)=0 \hbox{ in }(R_{j-1},R_j),\quad (-1)^j v'> v'(r_j)=0 \hbox{ in }(r_j,r_{j+1}).$$
Define
$$R_i=\sup \{r\in (0,1):\ (-1)^{i-1} v>0 \hbox{ in }(R_{i-1},r)\},\quad r_{i+1}=\sup\{r \in(0,1):(-1)^i v'>0 \hbox{ in }(r_i,r) \}.$$
Since $(-1)^{i-1}v'>0$ in $(r_{i-1},r_i)$ and $R_{i-1} \in (r_{i-1},r_i)$, we have that $r_i<R_i \leq 1$, and by \eqref{1649} with $R=r_i$ it follows that $(-1)^i v'>0$ in $(r_i, R_i]$. If $R_i=1$, then $r_{i+1}=1$ and the choice $k=i$ completes the proof. If $R_i<1$, the boundary condition $v(1)=0$ implies that $R_i<r_{i+1}<1$, which in turn leads to $v'(r_{i+1})=0$ and
$$(-1)^{i-1} v>v(R_i)=0 \hbox{ in }(R_{i-1},R_i),\quad (-1)^i v'> v'(r_i)=0 \hbox{ in }(r_i,r_{i+1}).$$
Such a process needs to stop after $k$ steps. Otherwise, we would find an increasing sequence $R_i$, $i \in \mathbb{N}$, so that $v(R_i)=v(R_{i+1})=0$. Letting $R=\displaystyle \lim_{i\to +\infty} R_i \in (0,1]$, we would have that $\displaystyle \lim_{i\to +\infty} r_i=R$ in view of $R_{i-1}<r_i<R_i$. Since $v \in C^2(\overline B \setminus \{0\})$, we would deduce that $v(R)=v'(R)=0$, and then by the uniqueness for the ODE $v=0$, a contradiction.

\medskip \noindent Finally, let us integrate \eqref{1633} against $v$ on a nodal region $A$ to get
\begin{eqnarray*}
S \left(\int_{A} |v|^{\frac{2N}{N-2}}\right)^{\frac{N-2}{N}} &\leq&  \int_{A} |\nabla v|^2=\int_{A} |v|^{\frac{2N}{N-2}}+ \epsilon \int_{A} |x|^\alpha v^2\\
&\leq& \int_{A} |v|^{\frac{2N}{N-2}}+ \epsilon | |x|^\alpha|_{\frac{N}{2}} \left(\int_{A} |v|^{\frac{2N}{N-2}}\right)^{\frac{N-2}{N}}
\end{eqnarray*}
thanks to the H\"older's inequality and to the embedding $\mathcal{D}^{1,2}(\mathbb{R}^N) \subset L^{\frac{2N}{N-2}}(\mathbb{R}^N)$  with Sobolev constant $S$. 
Setting $\eps_0=\frac{S}{2 | |x|^\alpha |_{\frac{N}{2}}}$, the validity of \eqref{2145} easily follows for all $0<\eps \leq \eps_0$.
\end{proof}

\noindent Let $v_n\in H_0^1(B)$ be a sequence of non-trivial radial solutions to \eqref{1633} with $\alpha>-2$. Up to a subsequence, we can assume that there exist $k\geq 1$ and sequences $R_0^n=r_1^n=0<R_1^n<r_2^n<\dots<R_{k-1}^n<r^n_k<R_k^n=r_{k+1}^{n+1}\leq 1$ so that for all $j=1,\dots,k$
\begin{equation}
(-1)^{j-1} v_n>v_n(R_j^n)=0 \quad \hbox{ in }(R_{j-1}^n,R_j^n),\quad (-1)^j v_n'> v_n'(r_j^n)=0 \hbox{ in }(r_j^n,r_{j+1}^n). \label{1148}
\end{equation}
Notice that such an assumption simply means that all the $v_n$'s have at least $k$ nodal regions. The case of positive solutions $v_n$ corresponds to take $k=1$ and $R_1^n=1$, whereas for sign-changing solutions we can always choose a subsequence with at least $k\geq 2$ nodal regions. Set $\delta_j^n=|v_n(r_j^n)|^{-\frac{2}{N-2}}$, where
\begin{equation} \label{1717}
|v_n|(r_j^n)=\max_{[R_{j-1}^n,R_j^n]} |v_n|.
\end{equation}
Blow-up phenomena for \eqref{1633} are described in terms of the limiting problem
\begin{equation} \label{1242}
-\Delta V=V^{\frac{N+2}{N-2}}\hbox{ in }\mathbb{R}^N,
\end{equation}
whose bounded solutions are completely classified \cite{CGS,GNN}. In particular, every radial positive and bounded solution of \eqref{1242} is given by
\begin{equation} \label{1243}
V_\delta(x)=\delta^{-\frac{N-2}{2}} V(\frac{x}{\delta})= \left(\frac{\delta}{\delta^2+a_N |x|^2}\right)^{\frac{N-2}{2}}
\end{equation}
for some $\delta>0$, where $a_N=\frac{1}{N(N-2)}$ and
\begin{equation} \label{Vdelta}
V(x)=\left(\frac{1}{1+a_N |x|^2}\right)^{\frac{N-2}{2}}.
\end{equation}
The asymptotic behavior of $v_n$ is described in the following main result:
\begin{theorem} \label{thm1210} As $n \to +\infty$ there hold
\begin{equation} \label{1139}
\frac{r_j^n}{\delta_j^n} \to 0, \quad \frac{R_j^n}{\delta_j^n} \to +\infty, \quad V_j^n(x)=(-1)^{j-1}   (\delta_j^n)^{\frac{N-2}{2}} v_n(\delta_j^n  x ) \to V \hbox{ in }C^1_{\hbox{loc}}(\mathbb{R}^N \setminus \{0\})
\end{equation}
for all $j=1,\dots,k$. Moreover, $\alpha \leq N-4$ if $k = 1$ and $\alpha<\frac{N-6}{2}$ if $k\geq 2$. If in addition
\begin{equation} \label{shrink}
R_{k-1}^n \to 0 \hbox{ and }R_k^n \to R_k>0
\end{equation}
as $n \to +\infty$, there hold $R_k^n=1$ and for all $j=1,\dots k-1$ 
\begin{eqnarray} 
&& \hspace{-0.6 cm}
 R_j^n \sim \left[ \frac{\int_{\mathbb{R}^N}  V^{\frac{N+2}{N-2}}}{(N-2)\omega_{N-1}}\right]^{\frac{1}{N-2}} \sqrt{\delta_j^n \delta_{j+1}^n}  \label{1316} \\
&& \hspace{-0.6 cm}\delta_j^n \sim \left[\frac{(\alpha+2)   \int_{\mathbb{R}^N} |x|^{\alpha}V^2 }{(N-2) \int_{\mathbb{R}^N} V^{\frac{N+2}{N-2}}}  \eps_n\right]^{\frac{(N-2)(\frac{N-2}{N-6-2\alpha})^{k-j}-(N-4-\alpha)}{(2+\alpha)(N-4-\alpha)}}  \left[\frac{ (N-2) \omega_{N-1} }{\int_{\mathbb{R}^N} V^{\frac{N+2}{N-2}} }\right]^{\frac{1}{N-4-\alpha}(\frac{N-2}{N-6-2\alpha})^{k-j}}  \nonumber \\
&& \hspace{-0.6 cm} \delta_k^n \sim \left[  \frac{(\alpha+2) \omega_{N-1} \int_{\mathbb{R}^N}  |x|^{\alpha} V^2}{(\int_{\mathbb{R}^N} V^{\frac{N+2}{N-2}})^2}   \eps_n \right]^{\frac{1}{N-4-\alpha}}\nonumber
\end{eqnarray}
as $n \to +\infty$ provided $\alpha<N-4$.
\end{theorem}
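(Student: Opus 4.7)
My plan is to split the proof into three stages: local blow-up analysis around each extremum $r_j^n$, Pohozaev identities on each nodal annulus, and iterative resolution of the resulting scaling system.

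\textit{Stage 1 (local blow-up).} Define $V_j^n(x) = (-1)^{j-1}(\delta_j^n)^{(N-2)/2} v_n(\delta_j^n x)$, which on the rescaled annulus $(R_{j-1}^n/\delta_j^n,\, R_j^n/\delta_j^n)$ satisfies
$$-\Delta V_j^n = |V_j^n|^{\frac{4}{N-2}} V_j^n + \eps_n (\delta_j^n)^{\alpha+2}|x|^\alpha V_j^n,$$
is radial, and is bounded by $1 = V_j^n(r_j^n/\delta_j^n)$. First I would rule out $\liminf \delta_j^n > 0$: together with the mass lower bound \eqref{2145}, this would give a non-trivial radial weak limit on a limiting (possibly degenerate) annulus with Dirichlet data, which Pohozaev on the unperturbed critical problem excludes. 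Once $\delta_j^n \to 0$, a Moser iteration of the type used in Lemma \ref{lem1243} gives uniform $L^\infty$ control of $V_j^n$ on compact subsets of the limit annulus, hence $C^1_{\hbox{loc}}$ subsequential convergence to a radial bounded solution $V^*$ of \eqref{1242}. The full classification \eqref{1243} forces $V^* = V$, whose unique critical point at $0$ gives $r_j^n/\delta_j^n \to 0$; and the strict positivity of $V$ everywhere forces the limit annulus to be $\mathbb{R}^N \setminus \{0\}$, yielding both $R_j^n/\delta_j^n \to +\infty$ and $R_{j-1}^n/\delta_j^n \to 0$ (the latter trivially for $j=1$, and by contradiction with the maximum principle otherwise).

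\textit{Stage 2 (Pohozaev on each nodal annulus).} Multiplying \eqref{1633} by $x \cdot \nabla v_n$, integrating on $\{R_{j-1}^n < |x| < R_j^n\}$ and using $v_n = 0$ on its endpoints gives
$$(\alpha+2)\,\eps_n \int_{R_{j-1}^n\le |x|\le R_j^n} |x|^\alpha v_n^2 \,dx= \omega_{N-1}\bigl[(R_j^n)^N\,(v_n'(R_j^n))^2-(R_{j-1}^n)^N\,(v_n'(R_{j-1}^n))^2\bigr].$$
For the outermost region with $k=1$, the inner term vanishes by \eqref{1107}, while the outer derivative is obtained from the Green-function representation $v_n(x) \sim (\delta_1^n)^{(N-2)/2}\bigl(\int V^{(N+2)/(N-2)}\bigr)\, G(x,0)$ for $x$ away from $0$, giving $\omega_{N-1}(v_n'(1))^2 \sim (\delta_1^n)^{N-2}\bigl(\int V^{(N+2)/(N-2)}\bigr)^2/\omega_{N-1}$. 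The LHS is estimated by the blow-up profile as $\sim \eps_n (\delta_1^n)^{\alpha+2}\int |y|^\alpha V^2$, which requires $\alpha < N-4$ to ensure convergence of the integral; if $\alpha > N-4$ the integral diverges at infinity, the balance becomes impossible for small $\eps_n$, and one obtains the constraint $\alpha\le N-4$ for $k=1$. The equality case $\alpha = N-4$ produces a logarithmic correction forcing $\delta_1^n$ to be exponentially small in $\eps_n$.

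\textit{Stage 3 (matching, recursion and the bound for $k\ge 2$).} At each nodal radius $R_j^n$ ($1\le j\le k-1$) one matches the outer tail of the $j$-th bubble with the inner value of the $(j+1)$-th: using $V_{\delta_j^n}(x) \sim \alpha_N(\delta_j^n)^{(N-2)/2}|x|^{-(N-2)}$ in the outer regime and $V_{\delta_{j+1}^n}(x) \sim (\delta_{j+1}^n)^{-(N-2)/2}$ at $|x|\ll \delta_{j+1}^n$, the condition $v_n(R_j^n)=0$ with alternating signs yields $R_j^n\sim c\sqrt{\delta_j^n\delta_{j+1}^n}$, which is \eqref{1316}. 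Plugging the corresponding radial derivative $v_n'(R_j^n)\sim -(N-2)\alpha_N(\delta_j^n)^{(N-2)/2}(R_j^n)^{-(N-1)}$ into the Pohozaev identity on the $j$-th annulus (and using that the mass $\int |x|^\alpha v_n^2$ is dominated by the $j$-th bubble core) produces, after elementary algebra, the recursion
$$\delta_j^n \;\sim\; \eps_n^{\,2/(N-6-2\alpha)}\,(\delta_{j+1}^n)^{(N-2)/(N-6-2\alpha)},$$
which is meaningful (and produces scales shrinking to $0$) precisely when $N-6-2\alpha>0$, giving the constraint $\alpha<(N-6)/2$ for $k\ge 2$. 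Anchored on $\delta_k^n\sim \eps_n^{1/(N-4-\alpha)}$ from Stage 2 applied to the outermost region, iterating $k-j$ times produces exactly the closed-form expression of the theorem, with the iterated exponent $(N-2)/(N-6-2\alpha)$ raised to the $(k-j)$-th power.

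\textit{Main obstacle.} The delicate point is to control the error in $\int |x|^\alpha v_n^2$ on each annulus, since $v_n$ is not exactly a single bubble there. One must split the annulus $[R_{j-1}^n,R_j^n]$ into the bubble core (where $v_n\approx V_{\delta_j^n}$), two transition necks around $R_{j-1}^n$ and $R_j^n$ where the neighbouring bubble tails overlap, and a bulk region; and show that all cross-contributions are of strictly lower order than the leading bubble contribution, with errors uniformly summable through the recursion. This is non-trivial because the multiplicative exponent $(N-2)/(N-6-2\alpha)>1$ in the recursion amplifies any polynomial-in-$\delta$ error, so the pointwise and derivative estimates on $v_n$ away from the bubble cores must be sharp, and must be carried out simultaneously on all $k$ scales.
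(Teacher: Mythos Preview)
Your overall architecture (blow-up + Pohozaev + recursion) matches the paper's, and Stage~2 is essentially correct. However two steps contain genuine gaps.

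\medskip \noindent \textbf{Stage~1 is circular for $j\ge 2$.} You invoke the classification \eqref{1243} to conclude $V^*=V$, but that classification is for bounded solutions on all of $\mathbb{R}^N$. A priori the limit domain is the annulus $\{L_1<|x|<L_2\}$ with $L_1=\lim R_{j-1}^n/\delta_j^n$ and $L_2=\lim R_j^n/\delta_j^n$, and the limiting equation \emph{does} have positive radial solutions on annuli and on ball complements---the paper flags this explicitly as a difficulty. Your ``contradiction with the maximum principle'' does not rule these out. The paper's actual argument is inductive: once the $(j-1)$-th bubble is understood, the precise asymptotic $v_n'(R_{j-1}^n)\sim -(\delta_{j-1}^n)^{(N-2)/2}(R_{j-1}^n)^{-(N-1)}\int V^{(N+2)/(N-2)}$ (equation \eqref{1811}) is matched against the derivative computed from the $j$-th side; this matching first shows $r_j^n/\delta_j^n$ is bounded (2nd Claim) and then that $r_j^n/\delta_j^n\to 0$ (3rd Claim), forcing the limit domain to be $\mathbb{R}^N$.

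\medskip \noindent \textbf{Stage~3's derivation of \eqref{1316} is not a proof.} You write that ``$v_n(R_j^n)=0$ with alternating signs'' yields $R_j^n\sim c\sqrt{\delta_j^n\delta_{j+1}^n}$ by matching the outer tail of $V_{\delta_j^n}$ with the inner plateau of $V_{\delta_{j+1}^n}$. But $v_n$ is a single exact solution, not a superposition of bubbles; there is no sense in which two profiles ``cancel'' at the node. The paper obtains \eqref{1316} differently: it integrates the ODE \eqref{1105} from $R_j^n$ to $r_{j+1}^n$ via the exact representation \eqref{1649}, uses the known value of $(R_j^n)^{N-1}v_n'(R_j^n)$ from \eqref{1811}, and equates the result with $|v_n(r_{j+1}^n)|=(\delta_{j+1}^n)^{-(N-2)/2}$ (see \eqref{1257}). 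This is the ``tricky compatibility condition'' mentioned in the paper's overview.

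\medskip \noindent Finally, the obstacle you highlight---controlling $\int |x|^\alpha v_n^2$ across neck regions---is not where the real work lies. Once one has the pointwise bound $|v_n|\le C V_{\delta_j^n}$ on $[R_{j-1}^n,R_j^n]$ (Proposition~\ref{prop1807}, proved via a monotonicity argument for $r^{(N-2)/2}v_n$ plus a barrier), all integral estimates are immediate by dominated convergence. That proposition is the key technical input you have not addressed, and it is also what rules out multiple bubbles of the same sign within a single nodal region.
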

\noindent Asymptotics for radial least-energy sign-changing solutions of \eqref{1633} with $\alpha=0$ and $N\geq 7$ has been already considered in \cite{Iac} and corresponds to the case $k=2$. Here we develop the asymptotic analysis in a completely general way by refining the results in \cite{Iac} for $k=2$, by covering the situation $\alpha \not=0$ and including the case $k \geq 3$. Several new difficulties arise:
\begin{itemize}
\item in each nodal region $v_n$ might develop multiple bubbles, but the Pohozaev identity will show crucial to prevent the interaction between bubbles of same sign;
\item the limiting problem admits positive radial solutions also on annuli or complements of balls, but none of them can be limit of $V_j^n$, as we will prove by a matching condition on $v_n'(R_j^n)$ as computed from the left and the right;
\item the precise law of $\delta_j^n$ is prescribed by the Pohozaev identity in terms of $\eps_n$ and $R_j^n$, but the asymptotic behavior of $R_j^n$ has to be determined according to a tricky compatibility condition between $v_n'(R_j^n)$ and $v_n(r_j^n)$.
\end{itemize}

\medskip \noindent Given $\Gamma$ in \eqref{betabis}, let 
\begin{equation}\label{sigmaj}
\sigma_j=\frac12 \frac\Gamma{\Gamma-1}\left(\frac{\Gamma}{\Gamma-2}\right)^{j-1}-\frac 12.
\end{equation} 
For $\mu=[\sqrt{N(N-2)} \delta]^{\frac{N-2}{2 \Gamma}}$, notice that the solution $U_\mu$ of \eqref{1635bis} given by \eqref{bubblebis} corresponds through \eqref{1209} to the solution 
$V_\delta$ of \eqref{1242} given by \eqref{1243}. Setting $M_{k-j+1}^n=(R_j^n)^{\frac{2\Gamma}{N-2}}$ and $\mu_j^n=[\sqrt{N(N-2)} \delta_{k-j+1}^n]^{\frac{N-2}{2 \Gamma}}$, by Theorem \ref{thm1210} with $\alpha=\frac{N-2}{\Gamma}-2$ we deduce the following:
\begin{corollary} \label{thm1211}
Let $u_n$ be a sequence of radial solutions for \eqref{1422bis} in $B$ with $\eps_n \to 0^+$ as $n \to +\infty$. 
\begin{itemize}
\item[(i)] If $u_n$ are positive functions, then $\gamma \leq \frac{(N-2)^2}{4}-1$ and
\begin{eqnarray*}
\mu_1^n =d_1 \eps_n^{\sigma_1}(1+o(1)),\quad U_1^n(x)=(\mu_1^n)^{\frac{N-2}{2}} u_n(\mu_1^n  x ) \to U \hbox{ in }C^1_{\hbox{loc}}(\mathbb{R}^N \setminus \{0\})
\end{eqnarray*}
as $n \to +\infty$ when $\gamma <\frac{(N-2)^2}{4}-1$.
\item[(ii)]  If $u_n$ are sign-changing solutions, then $\gamma < \frac{(N-2)^2}{4}-4$.
\item[(iii)] If $u_n$ have precisely $k-1$ shrinking nodal regions with nodes  
$$0=M_{k+1}^n<M_k^n<\dots<M_2^n \to 0,\qquad M_1^n \to M_1 \in (0,1]$$
as $n \to +\infty$, then there exist $\mu_j^n>0$, $j=1,\dots,k$, so that as $n \to +\infty$:
\begin{eqnarray*}
\mu_j^n =d_j \eps_n^{\sigma_j}(1+o(1)),\quad  
U_j^n(x)=(\mu_j^n)^{\frac{N-2}{2}} u_n(\mu_j^n  x ) \to U \hbox{ in }C^1_{\hbox{loc}}(\mathbb{R}^N \setminus \{0\}) 
\end{eqnarray*}
for all $j=1,\dots,k$ and
\begin{eqnarray*}
M_1^n=1,\quad M_j^n =A (\mu_{j-1}^n \mu_j^n )^{\frac{2\Gamma}{(N-2)^2}}(1+o(1))
\end{eqnarray*}
 for all $j=2,\dots, k$.
\end{itemize}
Here $U$ is given in \eqref{bubble0bis} and $A,d_j>0$ are explicit constants.
\end{corollary}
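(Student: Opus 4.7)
My plan is to reduce to Theorem \ref{thm1210} through the Emden--Fowler change of variables \eqref{1209}, which sends $u_n$ to a sequence $v_n$ of radial solutions of \eqref{1633} with exponent $\alpha=\frac{N-2}{\Gamma}-2>-2$ and perturbation $\bar\eps_n=\left(\frac{N-2}{2\Gamma}\right)^2\eps_n\to 0^+$. Since $s\mapsto s^{(N-2)/(2\Gamma)}$ is an increasing homeomorphism of $[0,1]$, this transformation preserves the nodal structure: if $0<M_k^n<\cdots<M_2^n<M_1^n\leq 1$ are the nodes of $u_n$ and $0<R_1^n<\cdots<R_{k-1}^n<R_k^n\leq 1$ those of $v_n$, then $R_j^n=(M_{k-j+1}^n)^{(N-2)/(2\Gamma)}$, so that the outermost scale in $u_n$ corresponds to the outermost scale in $v_n$.

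I would then translate the admissibility conditions of Theorem \ref{thm1210} using the elementary identities $\alpha+2=\frac{N-2}{\Gamma}$, $N-4-\alpha=\frac{(N-2)(\Gamma-1)}{\Gamma}$, $N-6-2\alpha=\frac{(N-2)(\Gamma-2)}{\Gamma}$. They give $\alpha\leq N-4 \Leftrightarrow \Gamma\geq 1 \Leftrightarrow \gamma\leq\frac{(N-2)^2}{4}-1$ for part (i), $\alpha<\frac{N-6}{2}\Leftrightarrow\Gamma>2\Leftrightarrow\gamma<\frac{(N-2)^2}{4}-4$ for part (ii), and the additional requirement $\alpha<N-4$ needed for the refined asymptotics in (iii) is automatic once $\gamma<\frac{(N-2)^2}{4}-4$.

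For the profile convergence I would verify by direct substitution that the bubble family $V_\delta$ in \eqref{1243} is carried by the inverse of \eqref{1209}, namely $u(s)=C^{-1}s^{-\beta_-}v(s^{2\Gamma/(N-2)})$ with $C=\left(\frac{N-2}{2\Gamma}\right)^{(N-2)/2}$, onto the bubble family $U_\mu$ in \eqref{bubblebis} with the explicit scale matching $\mu=[\sqrt{N(N-2)}\,\delta]^{(N-2)/(2\Gamma)}$; the verification reduces to the identity $C^{-1}a_N^{-(N-2)/4}=\alpha_N$, which is immediate from \eqref{betabis}. Setting $\mu_j^n=[\sqrt{N(N-2)}\,\delta_{k-j+1}^n]^{(N-2)/(2\Gamma)}$ (the reversal of indexing aligns the outer-to-inner ordering in $u_n$ with the inner-to-outer ordering of $v_n$ used in Theorem \ref{thm1210}), the $C^1_{\mathrm{loc}}(\mathbb R^N\setminus\{0\})$ convergence $V_{k-j+1}^n\to V$ from \eqref{1139} then becomes $U_j^n\to U$ up to the sign $(-1)^{k-j}$.

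Finally, substituting the formula for $\delta_{k-j+1}^n$ from \eqref{1316} into the definition of $\mu_j^n$ and simplifying with the identities above, the exponent of $\eps_n$ collapses to $\sigma_j=\frac12\frac{\Gamma}{\Gamma-1}\left(\frac{\Gamma}{\Gamma-2}\right)^{j-1}-\frac12$, giving $\mu_j^n=d_j\eps_n^{\sigma_j}(1+o(1))$ with an explicit $d_j$. The relation $M_j^n=(R_{k-j+1}^n)^{2\Gamma/(N-2)}$ together with the asymptotic $R_{k-j+1}^n\sim c\sqrt{\delta_{k-j+1}^n\delta_{k-j+2}^n}$ for $j\geq 2$ and the scale $\delta_\ell^n\sim\mathrm{const}\,(\mu_{k-\ell+1}^n)^{2\Gamma/(N-2)}$ yields the stated formula for $M_j^n$; the equality $M_1^n=1$ is inherited from $R_k^n=1$. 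The hard part will only be the bookkeeping of the index reversal and the sign tracking, since all the genuine analytic content already sits in Theorem \ref{thm1210}.
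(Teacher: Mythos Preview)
Your proposal is correct and follows exactly the route indicated in the paper: the Corollary is deduced from Theorem \ref{thm1210} via the Emden--Fowler change of variables \eqref{1209}, with the index reversal $\mu_j^n=[\sqrt{N(N-2)}\,\delta_{k-j+1}^n]^{(N-2)/(2\Gamma)}$, $M_{k-j+1}^n\leftrightarrow R_j^n$, and the algebraic translation of the thresholds on $\alpha$ into thresholds on $\gamma$ through the identities you listed. The paper gives essentially the same one-paragraph sketch just before the Corollary, so there is no methodological difference.
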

\noindent Let us discuss first the behavior of $v_n$ in $(0,R_1^n)$. Notice that the function $V_1^n=(\delta_1^n)^{\frac{N-2}{2}} v_n(\delta_1^n  x )$ solves
$$\left\{ \begin{array}{ll} 
-\Delta V_1^n=(V_1^n)^{\frac{N+2}{N-2}}+\eps_n (\delta_1^n)^{2+\alpha}|x|^\alpha V_1^n &\hbox{in }B_{\frac{R_1^n}{\delta_1^n}}(0)\\
0<V_1^n \leq V_1^n(0)=1 &\hbox{in }B_{\frac{R_1^n}{\delta_1^n}}(0) \end{array} \right.$$
in view of 
\begin{equation} \label{max}
0<(-1)^{j-1} v_n  \leq (-1)^{j-1} v_n(r_j^n)=\frac{1}{(\delta_j^n)^{\frac{N-2}{2}}}\qquad \hbox{in }
(R_{j-1}^n,R_j^n),
\end{equation}
a simple re-writing of \eqref{1717} through \eqref{1148}. By elliptic estimates we deduce that $V_1^n$ is uniformly bounded in $C^{0,\gamma}_{\hbox{loc}}(\mathbb{R}^N) \cap C^{1,\gamma}_{\hbox{loc}}(\mathbb{R}^N \setminus \{0\}) $, $\gamma \in (0,1)$, in view of \eqref{1640}. By the Ascoli-Arzel\'a's Theorem and a diagonal process, we have that, up to a subsequence, $V_1^n \to V$ in $C_{\hbox{loc}}(\mathbb{R}^N) \cap C^1_{\hbox{loc}}(\mathbb{R}^N \setminus \{0\}) $, where $V$ solves
$$-\Delta V=V^{\frac{N+2}{N-2}} \hbox{ in }\mathbb{R}^N ,\qquad 0<V \leq V(0)=1 \hbox{ in }\mathbb{R}^n $$
and has the form \eqref{Vdelta} \cite{CGS,GNN}.
We have used that 
\begin{equation} \label{1108}
\frac{R_1^n}{\delta_1^n} \to +\infty
\end{equation} 
as $n \to +\infty$. Indeed, if $\frac{\delta_1^n}{R_1^n}$ were bounded away from zero, then $\tilde V_1^n(x)=(R_1^n)^{\frac{N-2}{2}} v_n(R_1^n x)$ would be uniformly bounded in $B$ in view of \eqref{max}. Since $\tilde V_1^n>0$ solves
$$-\Delta \tilde V_1^n=(\tilde V_1^n)^{\frac{N+2}{N-2}}+\eps_n (R_1^n)^{2+\alpha}|x|^\alpha \tilde V_1^n \hbox{ in }B,\qquad \tilde V_1^n=0 \hbox{ on }\partial B,$$
by elliptic estimates, as before, we would deduce that, up to a subsequence, $\tilde V_1^n \to \tilde V_1$ in $C(\overline B) \cap C^1_{\hbox{loc}}(\overline B \setminus \{0\}) $, where $\tilde V_1 \geq 0$ is a bounded solution of
$$-\Delta \tilde V_1=(\tilde V_1)^{\frac{N+2}{N-2}} \hbox{ in }B \setminus \{0\},\quad \tilde V_1=0 \hbox{ on }\partial B.$$ 
Let us recall the Pohozaev identity \cite{Poh} in a radial form: given a solution $v$ of \eqref{1633} and a radial domain $A \subset B$, multiply \eqref{1633} by $\langle x,\nabla v \rangle= |x| v'$ and integrate in $A$ to get
\begin{equation} \label{17577}
(\alpha+2) \eps  \int_A |x|^{\alpha} v^2=\int_{\partial A} \left[ (v')^2
+\frac{N-2}{|x|} v v'+\frac{N-2}{N} |v|^{\frac{2N}{N-2}}+ \eps |x|^{\alpha} v^2 \right] \langle x,\nu \rangle.
\end{equation}
Since $0$ is a removable singularity in view of $\tilde V_1 \in L^\infty(\{0\})$, by \eqref{17577} with $\epsilon=0$ on $A=B$ we would get that $\tilde V_1=0$ and then
$$\int_ B |\tilde V_1^n|^{\frac{2N}{N-2}} \to 0$$
as $n \to +\infty$, in contradiction with \eqref{2145} in view of $\eps_n (R_1^n)^{2+\alpha}\to 0$ as $n \to +\infty$. 

\medskip \noindent We aim to show that there is no superposition of bubbles of same sign in $[0,R_1^n]$.  Interaction between bubbles of same sign can be ruled out by the Pohozaev identity \eqref{17577}. Letting
\begin{equation} \label{J}
J=\{ j=1,\dots,k: \ \eqref{1139} \hbox{ holds} \},
\end{equation}
notice that $1 \in J$ according to \eqref{1108}. We have the following general result:
\begin{proposition} \label{prop1807}
There exists $C>0$ so that
\begin{equation} \label{13044}
|v_n| \leq C V_{\delta_j^n} \qquad \hbox{in }[R_{j-1}^n,R_j^n]
\end{equation}
for all $j \in J$, where $V_\delta$ is given by \eqref{1243}.
\end{proposition}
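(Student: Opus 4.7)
The plan is to argue by contradiction using the Pohozaev identity \eqref{17577} on an annulus lying between two bubble scales, thereby ruling out a second concentration of the same sign in the $j$-th nodal region.

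Suppose, along a subsequence, that $T_n := \sup_{r \in [R_{j-1}^n, R_j^n]} |v_n(r)|/V_{\delta_j^n}(r) \to +\infty$, attained at some $\tilde r_n$. Since $j \in J$, the $C^1_{\mathrm{loc}}$ convergence $V_j^n \to V$ on $\mathbb{R}^N \setminus \{0\}$ from \eqref{1139} gives $|v_n(r)|/V_{\delta_j^n}(r) = 1+o(1)$ on $[A^{-1}\delta_j^n, A\delta_j^n] \cap [R_{j-1}^n, R_j^n]$ for each fixed $A>0$. Moreover, \eqref{max} together with the lower bound $V_{\delta_j^n}(r) \geq c (\delta_j^n)^{-(N-2)/2}$ for $r \leq \delta_j^n$ prevents blow-up of the ratio when $\tilde r_n \lesssim \delta_j^n$; hence $\tilde r_n/\delta_j^n \to +\infty$.

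Define a second scale $\tilde\delta_n$ by $|v_n(\tilde r_n)| = \tilde\delta_n^{-(N-2)/2}$. From \eqref{max}, $\tilde\delta_n \geq \delta_j^n$, and combining $T_n \to +\infty$ with the far-field asymptotic $V_{\delta_j^n}(\tilde r_n) \sim (\delta_j^n)^{(N-2)/2}\tilde r_n^{-(N-2)}$ forces $\tilde r_n^2 \gg \delta_j^n \tilde\delta_n$, so that $v_n$ sustains a ``second plateau'' of height $\sim \tilde\delta_n^{-(N-2)/2}$ around $\tilde r_n$. The rescaling $\tilde V_n(y) := \tilde\delta_n^{(N-2)/2} v_n(\tilde\delta_n y)$ then converges (on bounded subsets of $\mathbb{R}^N \setminus \{0\}$) to a bounded radial solution of the critical equation, which by \cite{CGS,GNN} is another Aubin-Talenti bubble from the family \eqref{1243}. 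I would then apply \eqref{17577} on the annulus $A_n := B_{\rho_n} \setminus B_{r_n}$ with $r_n := A \delta_j^n$ and $\rho_n := A^{-1}\tilde r_n$ for a large fixed $A$. The LHS $(\alpha+2)\eps_n \int_{A_n}|x|^\alpha v_n^2$ tends to zero as $n \to +\infty$ (the integral of $|x|^\alpha V^2$ converges, as $\alpha>-2$), while the boundary integrand evaluated on each individual bubble $V_{\delta}$ vanishes identically, as a direct computation confirms. The dominant contribution therefore comes from the cross-terms in the boundary integrand produced by the superposition $v_n \approx V_{\delta_j^n} + V_{\tilde\delta_n}$; these can be estimated explicitly at $\partial B_{\rho_n}$ and $\partial B_{r_n}$, and are shown to yield a nonvanishing mismatch in the limit $n \to \infty$ for $A$ large, producing the desired contradiction.

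The main obstacle is the careful control of the Pohozaev boundary cross-terms at the transition radius $\rho_n$, namely $2V'_{\delta_j^n} V'_{\tilde\delta_n}$, $\tfrac{N-2}{\rho_n}(V_{\delta_j^n} V'_{\tilde\delta_n} + V_{\tilde\delta_n} V'_{\delta_j^n})$ and the nonlinear cross-term coming from $|v_n|^{2N/(N-2)}$: these must be tracked to leading order in $\delta_j^n$ and $\tilde\delta_n$ and shown not to cancel. A key auxiliary input is the radial monotonicity of $v_n$ on each side of the unique critical point $r_j^n$ of the nodal region, which guarantees that the sign of $v_n'$ is consistent across the transition regime and hence the cross-terms add up with a prescribed sign.
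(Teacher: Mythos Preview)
Your proposal has a genuine gap in the extraction of the ``second bubble.'' You define $\tilde\delta_n$ by $|v_n(\tilde r_n)|=\tilde\delta_n^{-(N-2)/2}$ and claim that the rescaling $\tilde V_n(y)=\tilde\delta_n^{(N-2)/2}v_n(\tilde\delta_n y)$ converges to a bubble via \cite{CGS,GNN}. But by \eqref{1148} the function $|v_n|$ is strictly monotone on $(r_j^n,R_j^n)$, so there is no second local maximum in the nodal region: $\tilde V_n$ is monotone for $y>r_j^n/\tilde\delta_n$, and $\tilde V_n(r_j^n/\tilde\delta_n)=(\tilde\delta_n/\delta_j^n)^{(N-2)/2}$. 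If $\tilde\delta_n/\delta_j^n\to\infty$ this blows up and the CGS/GNN classification does not apply; if $\tilde\delta_n/\delta_j^n$ stays bounded the limit of $\tilde V_n$ on compacta is just a rescaled copy of $V$, while the point $\tilde r_n/\tilde\delta_n\to\infty$ escapes every compact set, so you learn nothing about $v_n$ near $\tilde r_n$. Either way the ``second plateau'' is never established, and your Pohozaev computation on the annulus $B_{\rho_n}\setminus B_{r_n}$ cannot be completed because you have no control of $v_n$ and $v_n'$ at the outer radius $\rho_n$; the appeal to ``cross-terms'' of a superposition $V_{\delta_j^n}+V_{\tilde\delta_n}$ is unjustified.

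The paper's argument is structurally different and exploits monotonicity of the \emph{scale-invariant} quantity $r^{(N-2)/2}v_n$ rather than $v_n$ itself. One first shows, by applying Pohozaev on a \emph{ball} $B_{M_n}$ at the first hypothetical zero of $[r^{(N-2)/2}v_n]'$ beyond $M\delta_j^n$, that no such zero can exist: the identity \eqref{17577} forces a sign contradiction because $M_n v_n'(M_n)=-\tfrac{N-2}{2}v_n(M_n)$ at such a point. This yields $(-1)^{j-1}[r^{(N-2)/2}v_n]'<0$ on $[M\delta_j^n,R_j^n]$, so $r^2|v_n|^{4/(N-2)}$ is uniformly small there. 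A barrier of the form $C(\delta_j^n)^{a}r^{-b}$ with $b$ slightly below $N-2$ then dominates $|v_n|$ by the maximum principle for the operator $-\Delta-|v_n|^{4/(N-2)}-\eps_n|x|^\alpha$, and a bootstrap through the integrated ODE \eqref{20222} upgrades this to the sharp decay $|v_n|\leq C(\delta_j^n)^{(N-2)/2}r^{-(N-2)}$ on $[M\delta_j^n,R_j^n]$, which together with \eqref{max} gives \eqref{13044}.
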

\begin{proof}
The presence of other bubbles in $[R_{j-1}^n,R_j^n]$ can be detected by the behavior of $r^{\frac{N-2}{2}}v_n(r)$. Notice that the function
$r^{\frac{N-2}{2}}V(r)=(\frac{r}{1+a_N r^2})^{\frac{N-2}{2}}$ satisfies
\begin{equation} \label{1119b}
r^{\frac{N-2}{2}}V \Big|_{r=a_N^{-\frac{1}{2}}}=(\frac{N(N-2)}{4})^{\frac{N-2}{4}}, \quad \lim_{r \to +\infty} r^{\frac{N-2}{2}}V(r)=0
\end{equation}
and\begin{equation}\label{1120}
[r^{\frac{N-2}{2}} V(r)]'=\frac{N-2}{2} \frac{r^{\frac{N-4}{2}}(1-a_N r^2)}{(1+a_N r^2)^{\frac{N}{2}}}<0 \hbox{ in } (a_N^{-\frac{1}{2}},+\infty).
\end{equation}
Thanks to \eqref{1119b} let us fix $M>a_N^{-\frac{1}{2}}$ so that
\begin{equation} \label{1033}
M^{\frac{N-2}{2}} V(M)= \min \{  [\frac{N(N-2)}{16}]^{\frac{N-2}{4}},  [\frac{(N-2)^2(N+1)}{2(N+2)^2}]^{\frac{N-2}{4}}\}.
\end{equation}	
We claim that for $n$ large
\begin{equation} \label{1708}
(-1)^{j-1} [r^{\frac{N-2}{2}}v_n]'<0 \hbox{ in }[M \delta_j^n, R_j^n].
\end{equation} 
Indeed, if \eqref{1708} were not true, we could find $M_n \in [M\delta_j^n,R_j^n]$ so that
\begin{equation} \label{0943}
(-1)^{j-1}[r^{\frac{N-2}{2}}v_n]' <[r^{\frac{N-2}{2}}v_n]'(M_n)=0 \hbox{ in } [M\delta_j^n,M_n),\qquad \frac{M_n}{\delta_j^n}\to 0 \hbox{ as }n \to +\infty,
\end{equation} 
as it follows by \eqref{1120} and
\begin{eqnarray*}
(-1)^{j-1} \delta_j^n [r^{\frac{N-2}{2}}v_n]' (r \delta_j^n)= [r^{\frac{N-2}{2}} V_j^n]' \to [r^{\frac{N-2}{2}} V]'
\end{eqnarray*}
locally uniformly in $(0,+\infty)$ as $n \to +\infty$ in view of \eqref{1139}. By \eqref{17577} applied to $v_n$ on $A=B_{M_n}(0)$ we get that
\begin{equation} \label{1035}
[\frac{M_n v'_n(M_n)}{v_n(M_n)}]^2
+(N-2)  \frac{M_n v_n'(M_n)}{v_n(M_n)}+\frac{N-2}{N} M_n^2 |v_n(M_n)|^{\frac{4}{N-2}}+\eps_n M_n^{2+\alpha} >0
\end{equation}
in view of $\alpha>-2$. Since by \eqref{0943}
$$M_nv_n'(M_n)=-\frac{N-2}{2}v_n(M_n),$$
we deduce that
$$-\frac{(N-2)^2}{4}+\frac{N-2}{N} M_n^2 |v_n(M_n)|^{\frac{4}{N-2}}+\eps_n M_n^{2+\alpha}>0.$$
Since
$$(-1)^{j-1} M_n^{\frac{N-2}{2}} v_n(M_n) \leq (-1)^{j-1}(M \delta_j^n)^{\frac{N-2}{2}} v_n(M \delta_j^n) =
M^{\frac{N-2}{2}} V_j^n(M)\to M^{\frac{N-2}{2}} V(M)$$
as $n \to +\infty$ in view of \eqref{1139} and \eqref{0943}, by \eqref{1033} we deduce that
\begin{eqnarray*}
-\frac{(N-2)^2}{4}+\frac{N-2}{N} M_n^2 |v_n(M_n)|^{\frac{4}{N-2}}+\eps_n M_n^{2+\alpha}\leq -\frac{(N-2)^2}{8}+\eps_n<0 
\end{eqnarray*}
for $n$ large, a contradiction with \eqref{1035}. The claim \eqref{1708} is established.

\medskip \noindent Once \eqref{1708} is established, we can prove the validity of \eqref{13044}. First, since $(-1)^{j-1}v_n$ is a positive solution of $L_n v_n=0$ in $[R_{j-1}^n,R_j^n]$, the operator $L_n=-\Delta -|v_n|^{\frac{4}{N-2}}-\eps_n |x|^\alpha$ satisfies the minimum principle in $[R_{j-1}^n,R_j^n]$, and we can compare $(-1)^{j-1} v_n$ with $\varphi_n=\frac{M^{\frac{(N-2)(N+1)}{N+2}}(\delta_j^n)^{\frac{N(N-2)}{2(N+2)}}}{r^{\frac{(N-2)(N+1)}{N+2}}}$ in $[M\delta_j^n, R_j^n]$. Since
\begin{eqnarray*}
L_n \varphi_n &=& M^{\frac{(N-2)(N+1)}{N+2}} (\delta_j^n)^{\frac{N(N-2)}{2(N+2)}} r^{-\frac{N^2+N+2}{N+2}}\left[
\frac{(N-2)^2(N+1)}{(N+2)^2}- r^2 |v_n(r)|^{\frac{4}{N-2}}-\eps_n r^{2+\alpha} \right]\\
&\geq &M^{\frac{(N-2)(N+1)}{N+2}} (\delta_j^n)^{\frac{N(N-2)}{2(N+2)}} r^{-\frac{N^2+N+2}{N+2}}\left[
\frac{(N-2)^2(N+1)}{(N+2)^2}- M^2 |V_j^n(M)|^{\frac{4}{N-2}}-\eps_n  \right]
\end{eqnarray*}
in $[M\delta_j^n,R_j^n]$ in view of \eqref{1708}, we have that $L_n \varphi_n>0$ in $[M\delta_j^n,R_j^n]$ for $n$ large in view of \eqref{1139} and \eqref{1033}. Since
$$(-1)^{j-1} v_n(M\delta_j^n) \leq \frac{1}{(\delta_j^n)^{\frac{N-2}{2}}} =\varphi_n(M\delta_j^n),\quad (-1)^{j-1} v_n(R_j^n)=0<\varphi_n(R_j^n)$$
in view of \eqref{max}, we have that 
$$|v_n| (r)=(-1)^{j-1}v_n(r) \leq   \frac{M^{\frac{(N-2)(N+1)}{N+2}}(\delta_j^n)^{\frac{N(N-2)}{2(N+2)}}}{r^{\frac{(N-2)(N+1)}{N+2}}} \quad \hbox{ in }[M\delta_j^n, R_j^n],$$
or equivalently
\begin{equation} \label{17077}
V_j^n (r)\leq   \frac{M^{\frac{(N-2)(N+1)}{N+2}}}{r^{\frac{(N-2)(N+1)}{N+2}}}
\quad \hbox{ in }[M, \frac{R_j^n}{\delta_j^n}].
\end{equation}
By \eqref{1649} with $R=r_j^n$ we get that in $[R_{j-1}^n,R_j^n]$
\begin{eqnarray}
(-1)^j v_n'(r) &=&
\frac{1}{r^{N-1}} \int_{r_j^n}^r s^{N-1}(|v_n|^{\frac{N+2}{N-2}}+\epsilon_n s^\alpha |v_n|)ds \nonumber \\
&=&
\frac{(\delta_j^n)^{\frac{N-2}{2}}}{r^{N-1}} \int_{\frac{r_j^n}{\delta_j^n}}^{\frac{r}{\delta_j^n}} s^{N-1}
(V_j^n)^{\frac{N+2}{N-2}}
+\frac{\epsilon_n}{r^{N-1}} \int_{r_j^n}^r s^{N-1+\alpha} |v_n| ds.  \label{20222}
\end{eqnarray}
Inserting \eqref{17077} into \eqref{20222} we deduce that
\begin{eqnarray*}
|v_n'(r)| &\leq&  \frac{(\delta_j^n)^{\frac{N-2}{2}}}{r^{N-1}} \left[
\frac{M^N}{N}+M^{N+1} \int_M^\infty \frac{1}{s^2}\right]\\
&&+\frac{ \epsilon_n}{r^{N-1}} \left[
\frac{M^{N+\alpha}}{N+\alpha} (\delta_j^n)^{\alpha+\frac{N+2}{2}}+
\frac{1}{\alpha+2} \sup_{[M\delta_j^n,R_j^n]}  r^{N-2}|v_n|(r) \right] \\
&\leq & \frac{C}{r^{N-1}} [ (\delta_j^n)^{\frac{N-2}{2}}+  \epsilon_n 
\sup_{[M\delta_j^n,R_j^n]}  r^{N-2}|v_n|(r)]
\end{eqnarray*}
for $M \delta_j^n\leq r \leq R_j^n$ in view of \eqref{max} and $\alpha+\frac{N+2}{2}>\frac{N-2}{2}$. Integrating in $[r,R_j^n]$ we get that
\begin{eqnarray*}
r^{N-2} |v_n(r)| \leq r^{N-2} \int_r^{R_j^n} |v_n'| \leq C (\delta_j^n)^{\frac{N-2}{2}}
\end{eqnarray*}
in $[M \delta_j^n,R_j^n]$, and then
\begin{eqnarray} \label{21033}
|v_n| (r) \leq  C \frac{(\delta_j^n)^{\frac{N-2}{2}}}{r^{N-2}} \leq C V_{\delta_j^n} \qquad \hbox{in }[M \delta_j^n,R_j^n]
\end{eqnarray}
for $n$ large. By \eqref{max} there holds that
$$|v_n|\leq \frac{1}{(\delta_j^n)^{\frac{N-2}{2}}}\leq C V_{\delta_j^n}\quad \hbox{in }[R_{j-1}^n,M \delta_j^n] $$
which, combined with \eqref{21033}, completes the proof.
\end{proof}

\noindent Thanks to Proposition \ref{prop1807} we are now in position to establish Theorem \ref{thm1210}.

\medskip \noindent $Proof \: (of \: Theorem \:\ref{thm1210}).$ Let $j\in J$, $J$ given in \eqref{J}, so that Proposition \ref{prop1807} applies. By \eqref{max} and \eqref{21033} we deduce that
\begin{equation} \label{1119}
\eps_n \int_{r_j^n}^{R_j^n} s^{N-1+\alpha} |v_n|ds=O(\eps_n (\delta_j^n)^{\alpha+\frac{N+2}{2}}+\eps_n 
(\delta_j^n)^{\frac{N-2}{2}})=o((\delta_j^n)^{\frac{N-2}{2}})
\end{equation}
as $n \to +\infty$ in view of $\alpha+\frac{N+2}{2}>\frac{N-2}{2}$, and \eqref{13044} can be re-written as
\begin{equation} \label{1808}
|V_j^n| \leq C V \quad \hbox{in }[\frac{R_{j-1}^n}{\delta_j^n},\frac{R_j^n}{\delta_j^n}].
\end{equation}
Inserting \eqref{1119} into \eqref{20222}, by the Lebesgue's Theorem we have that
\begin{eqnarray}  \label{1811}
(-1)^j (\delta_j^n)^{-\frac{N-2}{2}}  (R_j^n)^{N-1} v_n'(R_j^n) \to \int_0^\infty s^{N-1} V^{\frac{N+2}{N-2}} 
\end{eqnarray} 
for all $j \in J$ as $n \to +\infty$, in view of $V_j^n \to V$ in $C_{\hbox{loc}}(\mathbb{R}^N \setminus \{0\})$ and \eqref{1808}. 

\medskip \noindent Since $1 \in J$, let us apply \eqref{17577} to $v_n$ on $B_{R_1^n}(0)$ if $j=1$ or on $B_{R_j^n}(0) \setminus B_{R_{j-1}^n}(0)$ if $j \geq 2$ with $j-1,\ j \in J$. As $n \to +\infty$ we get that
\begin{eqnarray} 
&& (\alpha+2) \eps_n \int_{R_{j-1}^n}^{R_j^n}  r^{N-1+\alpha} v_n^2 =(R_j^n)^N (v_n'(R_j^n))^2  -(R_{j-1}^n)^N (v_n'(R_{j-1}^n))^2 \nonumber \\
&&= \left(\int_0^\infty r^{N-1} V^{\frac{N+2}{N-2}} \right)^2 [(\frac{\delta_j^n}{R_j^n})^{N-2} (1+o(1)) -(\frac{\delta_{j-1}^n}{R_{j-1}^n})^{N-2} (1+o(1))] \label{1150}
\end{eqnarray}
in view of \eqref{1811}, with the convention $\frac{\delta_0^n}{R_0^n}=0$. The LHS above has the following asymptotic behavior: if $\alpha > N-4$ there holds
\begin{eqnarray} \label{1215}
\int_{R_{j-1}^n}^{R_j^n}  r^{N-1+\alpha} v_n^2 \leq  C^2 [N(N-2)]^{N-2}
(\delta_j^n)^{N-2} \int_{R_{j-1}^n}^{R_j^n}  r^{3+\alpha-N}=O((\delta_j^n)^{N-2})
\end{eqnarray}
in view of  \eqref{13044}; if $-2< \alpha\leq N-4$ there holds
\begin{eqnarray}
\int_{R_{j-1}^n}^{R_j^n}  r^{N-1+\alpha} v_n^2 &=&(\delta_j^n)^{2+\alpha} \int_{\frac{R_{j-1}^n}{\delta_j^n}}^{\frac{R_j^n}{\delta_j^n}}  r^{N-1+ \alpha} (V_j^n)^2 \nonumber \\
&=& \left\{ \begin{array}{ll} (\delta_j^n)^{2+\alpha} \int_0^{+\infty}  r^{N-1+ \alpha} V^2 (1+o(1)) &\hbox{if }\alpha<N-4\\
O( (\delta_j^n)^{N-2} |\log \frac{R_j^n}{\delta_j^n}|) &\hbox{if }\alpha=N-4 \end{array} \right.  \label{1232}
\end{eqnarray}
in view of \eqref{1139}, \eqref{1808} and the Lebesgue's Theorem. 

\medskip \noindent We have some useful properties to establish. 

\medskip \noindent {\bf \underline{$1^{\hbox{st}}$ Claim}}: We have that
\begin{equation} \label{1519bb}
j-1 \in J,\ R_{j-1}^n <1 \: \Rightarrow \: \max_{[R_{j-1}^n,R_j^n]} |v_n| \to +\infty \hbox{ as }n\to +\infty.
\end{equation}
Up to a subsequence, assume that $R_{j-1}^n \to R_{j-1}$ and $R_j^n \to R_j$ as $n \to +\infty$. If $\displaystyle \max_{[R_{j-1}^n,R_j^n]} |v_n| \leq C$, by $\eps_n \to 0$ as $n \to +\infty$, \eqref{2145} and elliptic estimates we deduce that $R_{j-1}<R_j$ and, up to a subsequence, $(-1)^{j-1} v_n \to v$ in $C^2_{\hbox{loc}}(A)$, $A=B_{R_j}(0)\setminus \overline{B_{R_{j-1}}(0)}$, as $n \to +\infty$, where $v>0$ is a bounded solution of
\begin{equation} \label{1537}
-\Delta v=v^{\frac{N+2}{N-2}} \hbox{ in }A,\quad v=0 \hbox{ on }\partial A \setminus \{0\}.
\end{equation}
We have that $R_{j-1}>0$, since otherwise $v$ would be a solution of \eqref{1537} in the whole $B_{R_j}(0)$, $0$ being a removable singularity, and then would vanish by the Pohozaev identity \eqref{17577}. Up to a subsequence, by elliptic estimates $\tilde v_n(r)=(-1)^{j-1} (R_{j-1}^n)^{\frac{N-2}{2}} v_n( r R_{j-1}^n) \to \tilde v$ in $C^2_{\hbox{loc}}(A)$, $A=B_{\frac{R_j}{R_{j-1}}}(0)\setminus B$, as $n \to +\infty$, where $\tilde v>0$ is a bounded solution of
$$-\Delta \tilde v=\tilde v^{\frac{N+2}{N-2}} \hbox{ in }A,\quad \tilde v=0 \hbox{ on }\partial A .$$
In particular, $ \tilde v_n'(1)=(-1)^{j-1} (R_{j-1}^n)^{\frac{N}{2}} v_n'(R_{j-1}^n) \to \tilde v'(1)>0$, in contradiction with \eqref{1811} when $j-1 \in J$ and $R_{j-1}^n \to R_{j-1}>0$ as $n \to +\infty$. Then \eqref{1519bb} is established and the Claim is proved. \qed

\medskip \noindent {\bf \underline{$2^{\hbox{nd}}$ Claim}}: We have that
\begin{equation} \label{1519}
j-1 \in J,\ R_{j-1}^n <1 \: \Rightarrow \: \sup \frac{r_j^n}{\delta_j^n} <+\infty.
\end{equation}
If  $\frac{r_j^n}{\delta_j^n} \to +\infty$ as $n \to +\infty$, then $j\geq 2$ and the function $\tilde V_j^n(r)= (-1)^{j-1} (\delta_j^n)^{\frac{N-2}{2}} v_n(r_j^n+\delta_j^n r)$ solves
$$ \left\{ \begin{array}{ll}
-(\tilde V_j^n)''-(N-1) \frac{\delta_j^n}{r_j^n+\delta_j^n r}(\tilde V_j^n)'=(\tilde V_j^n)^{\frac{N+2}{N-2}}+\eps_n (\delta_j^n)^2 (r_j^n+\delta_j^n r)^\alpha \tilde V_j^n &\hbox{in }I_n=\left(-\frac{r_j^n-R_{j-1}^n}{\delta_j^n}, \frac{R_j^n-r_j^n}{\delta_j^n} \right)\\
0<\tilde V_j^n \leq \tilde V_j^n(0)=1 &\hbox{in }I_n\\
\tilde V_j^n=0 &\hbox{on }\partial I_n \end{array} \right.$$
in view of \eqref{max}. Up to a subsequence, assume that
$$\frac{r_j^n-R_{j-1}^n}{\delta_j^n} \to L_1 \in [0,+\infty],\quad \frac{R_j^n-r_j^n}{\delta_j^n} \to L_2 \in [0,+\infty]$$
as $n \to +\infty$. As we will justify later, we have that 
\begin{equation} \label{LL}
L_1,L_2>0.
\end{equation} 
Notice that
\begin{equation} \label{star1716}
(\delta_j^n)^2 (r_j^n+\delta_j^n r)^\alpha= (\frac{\delta_j^n}{r_j^n+\delta_j^n r})^2 (r_j^n+\delta_j^n r)^{2+\alpha} \leq (\frac{\delta_j^n}{r_j^n+\delta_j^n r})^2  \to 0
\end{equation}
as $n \to +\infty$ in $C_{\hbox{loc}}(-L_1,L_2)$, in view of $\frac{r_j^n}{\delta_j^n}\to +\infty$ as $n \to +\infty$. Up to a subsequence, by elliptic estimates we have that $\tilde V_j^n \to \tilde V_j$ in $C^1_{\hbox{loc}}(-L_1,L_2) $, where $\tilde V_j$ is a solution of
$$\left\{ \begin{array}{ll}
-(\tilde V_j)''=(\tilde V_j)^{\frac{N+2}{N-2}} &\hbox{in }(-L_1,L_2)\\
0<\tilde V_j \leq \tilde V_j(0)=1 &\hbox{in }(-L_1,L_2). \end{array} \right.$$ 
Since by energy conservation there holds
$$\frac{N}{N-2}(\tilde V_j')^2+(\tilde V_j)^{\frac{2N}{N-2}}=1,$$
the property $\tilde V_j>0$ implies that $L_1,L_2<+\infty$. By \eqref{1649} with $R=r_j^n$ and $r=R_{j-1}^n$ we get 
\begin{eqnarray}
(-1)^{j-1} (\delta_j^n)^{\frac{N}{2}}  v_n'(R_{j-1}^n) &=& \frac{(\delta_j^n)^{\frac{N}{2}}}{(R_{j-1}^n)^{N-1}}
\int_{R_{j-1}^n}^{r_j^n} s^{N-1}(|v_n|^{\frac{N+2}{N-2}}+\epsilon_n s^\alpha |v_n|)ds \nonumber \\ 
&=& \int_{-\frac{r_j^n-R_{j-1}^n}{\delta_j^n}}^{0} (\frac{r_j^n+\delta_j^n s}{R_{j-1}^n})^{N-1}[(\tilde V_j^n)^{\frac{N+2}{N-2}}+\epsilon_n  (\delta_j^n)^2(r_j^n+\delta_j^n s)^\alpha \tilde V_j^n]ds \nonumber \\
&\to& \int_{-L_1}^{0} (\tilde V_j)^{\frac{N+2}{N-2}} ds \label{1904}
\end{eqnarray}
in view of $\tilde V_j^n \leq 1$, \eqref{star1716} and 
\begin{equation} \label{1726}
\frac{r_j^n}{\delta_j^n} \to +\infty,\: \frac{r_j^n-R_{j-1}^n}{\delta_j^n} \to L_1 \in [0,+\infty) \: \Rightarrow\:
\frac{r_j^n}{R_{j-1}^n}=1+\frac{\frac{r_j^n-R_{j-1}^n}{\delta_j^n}}{\frac{r_j^n}{\delta_j^n}-\frac{r_j^n-R_{j-1}^n}{\delta_j^n}} \to 1
\end{equation}
as $n \to +\infty$. When $j-1 \in J$, \eqref{1904} is in contradiction with \eqref{1811} since
\begin{eqnarray*} 
\frac{(\delta_{j-1}^n)^{\frac{N-2}{2}}}{(R_{j-1}^n)^{N-1}}=(\frac{\delta_{j-1}^n}{R_{j-1}^n})^{\frac{N-2}{2}}
(\frac{r_j^n}{R_{j-1}^n})^{\frac{N}{2}} (\frac{\delta_j^n}{r_j^n})^{\frac{N}{2}}
\frac{1}{(\delta_j^n)^{\frac{N}{2}}}
=o\left( \frac{1}{(\delta_j^n)^{\frac{N}{2}}} \right)
\end{eqnarray*} 
as $n \to +\infty$, as it follows by \eqref{1726}, $j-1 \in J$ and $\frac{r_j^n}{\delta_j^n} \to +\infty$ as $n \to +\infty$. Then \eqref{1519} is established.

\medskip \noindent To complete the proof of the Claim, we need to establish \eqref{LL}. Apply \eqref{1649} with $R=r_j^n$ to get by \eqref{max} that
\begin{equation} \label{11000}
|v_n'(r)| \leq (\frac{r_j^n}{r})^{N-1} (\delta_j^n)^{-\frac{N-2}{2}} \left[\frac{r_j^n-R_{j-1}^n}{(\delta_j^n)^2}+ \eps_n \frac{(r_j^n)^{\alpha+1}}{N+\alpha} \right]
\end{equation}
for all $R_{j-1}^n \leq r \leq r_j^n$ and
\begin{equation} \label{1101}
|v_n'(r)| \leq (\delta_j^n)^{-\frac{N-2}{2}}\left[ 
\frac{r-r_{j}^n}{(\delta_j^n)^2}+ \eps_n \frac{r^{\alpha+1}}{N+\alpha} \right]
\end{equation}
for all $r_j^n \leq r \leq R_j^n$. We deduce the following estimates by integrating \eqref{11000} in $[R_{j-1}^n,r_j^n]$:
\begin{equation} \label{2012}
(\delta_j^n)^{-\frac{N-2}{2}}=|\int_{R_{j-1}^n}^{r_j^n} v_n'|\leq 
 (\frac{r_j^n}{R_{j-1}^n})^{N-1} (\delta_j^n)^{-\frac{N-2}{2}} \left[(\frac{r_j^n-R_{j-1}^n}{\delta_j^n})^2+\frac{\eps_n }{N+\alpha} \right],
\end{equation}
and \eqref{1101} in $[r_j^n,R_j^n]$:
\begin{equation}
(\delta_j^n)^{-\frac{N-2}{2}}=|\int_{r_j^n}^{R_j^n} v_n'|\leq 
 (\delta_j^n)^{-\frac{N-2}{2}}\left[ 
(\frac{R_j^n-r_{j}^n}{\delta_j^n})^2+ \frac{ \eps_n}{N+\alpha} \right], \label{2013}
\end{equation}
in view of $\alpha+2>0$ and $\int_0^1 r^{\alpha+1} dr<+\infty$. Therefore we have shown that
\begin{equation} \label{1058}
\frac{R_j^n-r_{j}^n}{\delta_j^n},\ \frac{r_j^n-R_{j-1}^n}{\delta_j^n} \geq \delta>0
\end{equation}
for some $\delta>0$ in view of \eqref{1726}, and the validity of \eqref{LL} follows. \qed

\medskip \noindent When $k=1$, we can apply \eqref{1150} with $j=1$ to get $\alpha\leq N-4$. Indeed, $\alpha>N-4$ would imply, by inserting \eqref{1215} into \eqref{1150}, that $1 =O( \eps_n (R_1^n)^{N-2})$, yielding a contradiction in view of $\eps_n (R_1^n)^{N-2} \to 0$ as $n \to +\infty$. If in addition $R_1^n \to R_1>0$ as $n \to +\infty$, by \eqref{1519bb} for $j=2$ condition $R_1^n<1$ would imply $\delta_2^n \to 0$ and then $\frac{r_2^n}{\delta_2^n} \to +\infty$ as $n \to +\infty$, in contradiction with
\eqref{1519} for $j=2$. Hence $R_1^n=1$ for $n$ large and, when $\alpha<N-4$, by inserting \eqref{1232} into \eqref{1150} for $j=1$ we get that
$$\delta_1^n=\left[\frac{(\alpha+2) \omega_{N-1} \int_{\mathbb{R}^N}  |x|^\alpha V^2}{(\int_{\mathbb{R}^N} V^{\frac{N+2}{N-2}})^2}  \eps_n\right]^{\frac{1}{N-4-\alpha}} (1+o(1)),$$
completing the proof for $k=1$.

\medskip \noindent When $k \geq 2$, by \eqref{1519bb} and \eqref{1519} for $j=2$ we can assume, up to a subsequence, that $\delta_2^n \to 0$ and $\frac{r_2^n}{\delta_2^n} \to L \in [0,+\infty)$ as $n \to +\infty$.

\medskip \noindent {\bf \underline{$3^{\hbox{rd}}$ Claim}}: There holds
\begin{equation} \label{1728}
\lim_{n\to +\infty} \frac{r_2^n}{\delta_2^n}=0.
\end{equation}
Assume by contradiction that $L>0$. Since
$$\frac{r_2^n}{R_1^n}=1+\frac{\frac{r_2^n-R_1^n}{\delta_2^n}}{\frac{r_2^n}{\delta_2^n}-\frac{r_2^n-R_1^n}{\delta_2^n}} \to 1$$
if $\frac{r_2^n-R_1^n}{\delta_2^n} \to 0$ as $n \to +\infty$, by \eqref{2012}-\eqref{2013} we can still deduce the validity of \eqref{1058} for $j=2$. Up to a subsequence, we can then assume that
$$  \frac{R_1^n}{\delta_2^n} \to  L_1 \in [0,L),\qquad  \frac{R_2^n}{\delta_2^n} \to L_2 \in(L,+\infty].$$
The function $V_2^n$ does solve
$$\left\{ \begin{array}{ll} 
-\Delta V_2^n=(V_2^n)^{\frac{N+2}{N-2}}+ \eps_n (\delta_2^n)^{2+\alpha}|x|^\alpha V_2^n &\hbox{in }I_n=(\frac{R_1^n}{\delta_2^n},\frac{R_2^n}{\delta_2^n})\\
0<V_2^n \leq V_2^n(\frac{r_2^n}{\delta_2^n})=1 &\hbox{in }I_n\\
V_2^n =0 &\hbox{on }\partial I_n \end{array} \right.$$
in view of \eqref{max}. Arguing as above, by elliptic estimates we have that, up to a subsequence, $V_2^n \to V_2$ in $C^1_{\hbox{loc}}(A) $, $A=B_{L_2}(0) \setminus \overline{ B_{L_1}(0)}$, where $V_2$ solves
$$-\Delta V_2=(V_2)^{\frac{N+2}{N-2}} \hbox{ in }A,\qquad 0<V_2 \leq V_2(L)=1 \hbox{ in }A.$$
By \eqref{20222} it follows that
\begin{eqnarray} 
-(\delta_2^n)^{-\frac{N-2}{2}}  (R_1^n)^{N-1} v_n'(R_1^n) &=&
\int_{\frac{R_1^n}{\delta_2^n}}^{\frac{r_2^n}{\delta_2^n}} s^{N-1}(V_2^n)^{\frac{N+2}{N-2}}+ \eps_n
(\delta_2^n)^{2+\alpha} \int_{\frac{R_1^n}{\delta_2^n}}^{\frac{r_2^n}{\delta_2^n}} s^{N-1+\alpha} V_2^n \nonumber \\
&\to&  \int_{L_1}^{L} s^{N-1} (V_2)^{\frac{N+2}{N-2}} \label{1840} 
\end{eqnarray} 
as $n \to +\infty$ in view of $V_2^n \leq 1$. Since $1\in J$, by \eqref{1811} and \eqref{1840} we get that $\delta_1^n \sim \delta_2^n$ as $n \to +\infty$, in contradiction with
$$\frac{\delta_2^n}{\delta_1^n}\geq \frac{1}{2L} \frac{r_2^n}{\delta_1^n}\geq \frac{1}{2 L} \frac{R_1^n}{\delta_1^n} \to +\infty$$
as $n \to +\infty$ as it follows by \eqref{1108}. Then \eqref{1728} is established and the Claim is proved. \qed

\medskip \noindent Once \eqref{1728} is established, we proceed as follows. Since $0\leq \frac{r_2^n-R_1^n}{\delta_2^n}\leq \frac{r_2^n}{\delta_2^n}\to 0$ observe that 
\begin{equation} \label{2225}
\frac{R_1^n}{r_2^n} \to 0 \hbox{ as }n \to +\infty
\end{equation} 
in view of \eqref{2012}. Up to a subsequence, we can assume that $\frac{R_2^n}{\delta_2^n} \to L_2\in (0,+\infty]$ in view of \eqref{2013}, and, arguing as above, deduce by elliptic estimates that $V_2^n \to V_2$ in $C^1_{\hbox{loc}} (B_{L_2}(0) \setminus \{0\})$ as $n \to +\infty$, where $V_2$ solves
$$-\Delta V_2=(V_2)^{\frac{N+2}{N-2}} \hbox{ in }B_{L_2}(0),\qquad 0\leq V_2\leq 1 \hbox{ in }B_{L_2}(0)$$
with $V_2(L_2)=0$ if $L_2<+\infty$. Since by \eqref{1101} there holds
$$|(V_2^n)'|(r)\leq r-\frac{r_2^n}{\delta_2^n}+ \eps_n (\delta_2^n)^{\alpha+2} \frac{r^{\alpha+1}}{N+\alpha}$$
for all $\frac{r_2^n}{\delta_2^n} \leq r \leq \frac{R_2^n}{\delta_2^n}$, we have that
$$V_2^n(r)=1+\int_{\frac{r_2^n}{\delta_2^n}}^r (V_2^n)' \geq 1-
\frac{1}{2} (r-\frac{r_2^n}{\delta_2^n})^2- \eps_n (\delta_2^n)^{\alpha+2}\int_0^r \frac{s^{\alpha+1}}{N+\alpha} $$
for all $\frac{r_2^n}{\delta_2^n}\leq r \leq \frac{R_2^n}{\delta_2^n}$, and then as $n \to +\infty$ we deduce that $1\geq V_2(r) \geq 1-
\frac{1}{2} r^2$
for all $0<r<L_2$. Hence $V_2(0)=1$, $L_2=+\infty$ by Pohozaev identity \eqref{17577} and $V_2=V$, where $V$ is given by \eqref{Vdelta}. 

\medskip \noindent So far we have shown that $1 \in J \: \Rightarrow \: 2 \in J$. As already explained, the new estimate \eqref{1316}  becomes crucial here. The difficulty is that very few is known about $v_n$ in the range $[R_1^n,r_2^n]$, a problem which can be by-passed through the following trick. The key remark is that
\begin{equation} \label{1237}
\frac{1}{r^{N-1}}\int_{R_1^n}^r s^{N-1} (|v_n|^{\frac{N+2}{N-2}}+ \eps_n s^\alpha |v_n|)ds=(\delta_2^n)^{-\frac{N-2}{2}} O\left(   \frac{r_2^n}{(\delta_2^n)^2}+ \eps_n r^{\alpha+1} \right)
\end{equation}
for all $r \in [R_1^n, r_2^n]$ in view of \eqref{max}. By integrating \eqref{1105} for $v_n$ in $(R_1^n,r)$ we get that
\begin{eqnarray}
\label{1247}
v_n'(r)=\frac{(R_1^n)^{N-1} v_n'(R_1^n)}{r^{N-1}}-\frac{1}{r^{N-1}} \int_{R_1^n}^r s^{N-1}(|v_n|^{\frac{N+2}{N-2}}+\epsilon s^\alpha |v_n|)ds 
\end{eqnarray}
for all $r \in [R_1^n,r_2^n]$. Inserting \eqref{1811} with $j=1$ and \eqref{1237} into \eqref{1247} we deduce that
$$v_n'(r)= -\frac{(\delta_1^n)^{\frac{N-2}{2}}}{r^{N-1}}  \int_0^\infty s^{N-1} V^{\frac{N+2}{N-2}} [1+o(1)]
+(\delta_2^n)^{-\frac{N-2}{2}} O\left(   \frac{r_2^n}{(\delta_2^n)^2}+ \eps_n r^{\alpha+1} \right)$$
for all $r \in [R_1^n,r_2^n]$, and then
\begin{eqnarray}
(\delta_2^n)^{-\frac{N-2}{2}}&=& - \int_{R_1^n}^{r_2^n}  v_n' \label{1257} \\
&=& \frac{(\delta_1^n)^{\frac{N-2}{2}} }{N-2} \int_0^\infty s^{N-1} V^{\frac{N+2}{N-2}}[1+o(1)][\frac{1}{(R_1^n)^{N-2}}-\frac{1}{(r_2^n)^{N-2}}]  \nonumber  \\
&& +(\delta_2^n)^{-\frac{N-2}{2}} O\left(   (\frac{r_2^n}{\delta_2^n})^2+ \eps_n \int_0^1 r^{\alpha+1} \right) \nonumber
\end{eqnarray}
as $n \to +\infty$. Since $\frac{R_1^n}{r_2^n}, \ \frac{r_2^n}{\delta_2^n} \to 0$ as $n\to +\infty$ in view of \eqref{1139} with $j=2$ and \eqref{2225}, by \eqref{1257} we deduce the validity of \eqref{1316} for $R_1^n$.

\medskip \noindent We already have that $\alpha\leq N-4$. The case $\alpha=N-4$ can be excluded since \eqref{1232} into \eqref{1150} for $j=1$ would provide $1=O(\eps_n (R_1^n)^{N-2} |\log \frac{R_1^n}{\delta_1^n}|)$, a contradiction in view of $\eps_n, R_1^n \to 0$ and
$$\frac{R_1^n}{\delta_1^n}=
\frac{\delta_2^n}{R_1^n}  \frac{(R_1^n)^2}{\delta_1^n \delta_2^n}=O(
\frac{\delta_2^n}{R_1^n})=O(\frac{1}{R_1^n})$$
as $n \to +\infty$, thanks to \eqref{1316} for $R_1^n$. Hence $\alpha<N-4$ and \eqref{1232} into \eqref{1150} provides that
\begin{equation} \label{1518}
(\alpha+2) \eps_n (\delta_1^n)^{2+\alpha} \int_0^{+\infty}  r^{N-1+ \alpha}V^2= \left(\int_0^\infty r^{N-1} V^{\frac{N+2}{N-2}} \right)^2 (\frac{\delta_1^n}{R_1^n})^{N-2} (1+o(1)).
\end{equation}
In view of \eqref{1316} for $R_1^n$, \eqref{1518} gives that
$$(\delta_1^n)^{\frac{N-6-2\alpha}{2}} \sim \eps_n (\delta_2^n)^{\frac{N-2}{2}} \to 0$$
as $n \to +\infty$, which necessarily requires $\alpha<\frac{N-6}{2}.$

\medskip \noindent We can easily iterate the above procedure to show that $J=\{1,\dots,k\}$ and \eqref{1316} does hold for all $j=1,\dots,k-1$. If \eqref{shrink} does hold, condition $R_k^n<1$ would imply the existence of $R_k^n<r_{k+1}^n < R_{k+1}^n\leq 1$ so that
$v_n(R_{k+1}^n)=0$ and
$$|v_n|(r_{k+1}^n)=\max_{[R_k^n,R_{k+1}^n]} |v_n|.$$
Setting $\delta_{k+1}^n=|v_n(r_{k+1}^n)|^{-\frac{2}{N-2}}$, by  \eqref{1519bb} with $j=k$ we would deduce that
$\delta_{k+1}^n \to 0$ and then $\frac{r_{k+1}^n}{\delta_{k+1}^n} \to +\infty$ as $n \to +\infty$, in contradiction with \eqref{1519} for $j=k$. Hence $R_k^n=1$ for $n$ large.

\medskip \noindent Since by \eqref{1139}
$$\frac{\delta_j^n}{\delta_{j+1}^n} =\frac{\delta_j^n}{R_j^n} \frac{R_j^n}{\delta_{j+1}^n} <\frac{\delta_j^n}{R_j^n} \frac{r_{j+1}^n}{\delta_{j+1}^n}\to 0 \quad \hbox{as }n \to +\infty$$
for all $j=1,\dots,k-1$, by \eqref{1150} and \eqref{1232} we get that
\begin{equation} \label{15188}
(\alpha+2) \eps_n (\delta_j^n)^{2+\alpha} \int_0^{+\infty}  r^{N-1+ \alpha}V^2= \left(\int_0^\infty r^{N-1} V^{\frac{N+2}{N-2}} \right)^2 (\frac{\delta_j^n}{R_j^n})^{N-2} (1+o(1)).
\end{equation}
For $j=k$ by \eqref{15188} we have that
\begin{equation} \label{1529}
\delta_k^n =\left[  \frac{(\alpha+2) \omega_{N-1} \int_{\mathbb{R}^N}  |x|^{\alpha}V^2}{(\int_{\mathbb{R}^N} V^{\frac{N+2}{N-2}})^2}   \eps_n \right]^{\frac{1}{N-4-\alpha}}(1+o(1))
\end{equation}
as $n \to +\infty$ in view of $R_k^n=1$. For $j=1,\dots,k-1$, by inserting \eqref{1316} into \eqref{15188} we have that
$$(\alpha+2)   \int_{\mathbb{R}^N} |x|^{\alpha} V^2  \eps_n (\delta_{j+1}^n)^{\frac{N-2}{2}}= (N-2) \int_{\mathbb{R}^N} V^{\frac{N+2}{N-2}} (\delta_j^n)^{\frac{N-6-2\alpha}{2}} (1+o(1))$$
as $n \to +\infty$. We finally deduce that 
\begin{equation} \label{1530}
\delta_j^n = \left[\frac{(\alpha+2)   \int_{\mathbb{R}^N}  |x|^{\alpha} V^2 }{(N-2) \int_{\mathbb{R}^N} V^{\frac{N+2}{N-2}}}\right]^{\frac{2}{N-6-2\alpha}}
( \eps_n)^{\frac{2}{N-6-2\alpha}} (\delta_{j+1}^n)^{\frac{N-2}{N-6-2\alpha}}(1+o(1))
\end{equation}
as $n \to +\infty$ for all $j=1,\dots,k-1$, or equivalently
\begin{eqnarray*} \delta_j^n \sim  \left[\frac{(\alpha+2)   \int_{\mathbb{R}^N} |x|^{\alpha} V^2 }{(N-2) \int_{\mathbb{R}^N} V^{\frac{N+2}{N-2}}} \eps_n \right]^{\frac{(N-2)(\frac{N-2}{N-6-2\alpha})^{k-j}-(N-4-\alpha)}{(2+\alpha)(N-4-\alpha)}}  \left[\frac{ (N-2) \omega_{N-1}  }{\int_{\mathbb{R}^N} V^{\frac{N+2}{N-2}} }\right]^{\frac{1}{N-4-\alpha}(\frac{N-2}{N-6-2\alpha})^{k-j}}
\end{eqnarray*}
as it follows iteratively by \eqref{1529}-\eqref{1530}. This completes the proof. \qed

\section{A perturbative approach: setting of the problem}\label{sec2}
\noindent In this section we provide a very delicate perturbative scheme in order to prove Theorem \ref{torri}. The main ingredient in our construction are the Euclidean bubbles defined in \eqref{bubblebis} which are all the solutions to the  critical equation \eqref{1635bis} with Hardy potential  in the Euclidean space. 

\medskip \noindent It turns out to be useful to rewrite problem \eqref{1422bis} as follows.
We let $\i^*:L^{\frac{2N}{N+2}}\(\Omega\)\rightarrow H^1_0(\Omega)$ be the adjoint operator of the embedding $\i:H^1_0\(\Omega\)\hookrightarrow L^{{2N\over N-2}}\(\Omega\)$, i.e. for any $w \in L^{\frac{2N}{N+2}}(\Omega)$ the function $u=\i^*\(w\) \in H^1_0\(\Omega\)$ is the unique solution of 
\begin{equation}\label{is}
L_\gamma u=-\Delta u-\gamma {u\over|x|^2}=w \ \hbox{in}\ \Omega,\ u=0\ \hbox{on}\ \partial\Omega.
\end{equation}
By continuity of the embedding $H^1_0(\Omega)\hookrightarrow L^{2N\over N-2}\(\Omega\)$, we get
$$\left\|\i^*\(w\)\right\|\le C\left |w\right |_{\frac{2N}{N +2}}$$
for some $C>0$. We rewrite problem \eqref{1422bis} as
\begin{equation}\label{Eq1b}
u=\i^*\[|u|^{\frac{4}{N-2}}u +\eps u\],\ u\in H^1_0(\Omega).\end{equation}

\subsection{The projection of the bubble}
To get a good approximation of our solution, it is necessary to project the bubble $U_\mu$ onto the space $H^1_0(\Omega)$. 
More precisely, letting $PU_\mu=\i^*\(U_\mu^{\frac{N+2}{N-2}}\)$, according to \eqref{is} $PU_\mu$ solves
\begin{equation}\label{PU}
L_\gamma PU_\mu=L_\gamma U_\mu=U_\mu^{\frac{N+2}{N-2}} \ \hbox{in}\ \Omega,\ PU_\mu=0\ \hbox{on}\ \partial\Omega
\end{equation}
in view of \eqref{1635bis} for $U_\mu$. Since $U_\mu^{\frac{N+2}{N-2}}\geq 0$ in $\Omega$ and $PU_\mu \in H_0^1(\Omega)$, by the weak maximum principle we have that $PU_\mu \geq 0$ in $\Omega$. To get the expansion of $PU_\mu$ with respect to $\mu$, we make use of some tools introduced by Ghossoub and Robert \cite{gr,GhRo}. First, let us recall the existence of a positive singular solution $G_\gamma \in C^2(\bar \Omega\setminus\left\{0\right\})$ to 
\begin{equation}\label{pbgreenreg}
\left\{
\begin{array}{ll}
L_\gamma G_{\gamma}=0 &\mbox{in } \Omega\setminus\{0\}\\
G_{\gamma }=0 &\mbox{on }\partial\Omega
\end{array}
\right.
\end{equation}
having  near the origin the following expansion:
\begin{equation}\label{expgreen}
G_{\gamma} (x)=\frac{c_1}{|x|^{\bp}}-\frac{c_2}{|x|^{\bm}}+o\(\frac{1}{|x|^{\bm}}\)\qquad \mbox{as}\,\, x\to 0,
\end{equation}
where $c_1,\: c_2>0$ and $\beta_\pm$ are given in \eqref{betabis}. The function $H_\gamma=\frac{c_1}{|x|^{\bp}}-G_\gamma$ in turn satisfies
\begin{equation} 
\left\{
\begin{aligned}
&L_\gamma H_{\gamma}=0 &\qquad & \mbox{in}\,\, \Omega\setminus\{0\}\\
&H_{\gamma }=\frac{c_1}{|x|^{\bp}}&\qquad & \mbox{on}\,\, \partial\Omega
\end{aligned}
\right.
\end{equation}
with
\begin{equation} \label{1922}
H_\gamma (x) \sim \frac{c_2}{|x|^{\bm}}\qquad \hbox{as } x \to 0.
\end{equation}
By Theorem 9 in
\cite{GhRo} observe that $H_\gamma \in H_0^1(\Omega)$, whereas $G_\gamma \notin H_0^1(\Omega)$.
The quantity $m=m_{\gamma,0}=\frac{c_2}{c_1}>0$ is referred to as the {\it Hardy interior mass} of $\Omega$ associated to $L_\gamma$ and w.l.o.g. we can assume $c_1=1$.

\medskip \noindent We have the following estimates.
\begin{lemma}\label{prop-pro} There hold
\begin{itemize} 
\item[(i)] $0\le PU_\mu\le U_\mu\ \hbox{in}\ \Omega$
\item[(ii)] $PU_\mu=U_\mu- \alpha_N \mu^{\Gamma} H_{\gamma}+O\(\frac{\mu^{\frac{N+2}{N-2}\Gamma}}{|x|^{\beta_-}}\)$
uniformly in $\Omega$ as $\mu \to 0$
\item[(iii)] $PU_\mu=U_\mu+O\({\mu^{\Gamma}\over |x|^\bm}\)$ uniformly in $\Omega$ as $\mu \to 0$.
\end{itemize}
\end{lemma}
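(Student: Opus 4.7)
The overall approach is to deploy the weak maximum principle for $L_\gamma$, which is applicable thanks to the coercivity of $\langle\cdot,\cdot\rangle$ on $H^1_0(\Omega)$ under the standing assumption $\gamma<(N-2)^2/4$. All three inequalities are obtained by comparing $U_\mu$, $PU_\mu$ and $H_\gamma$ via the maximum principle, after a suitable choice of auxiliary $L_\gamma$-harmonic function.

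For part \emph{(i)}, set $w:=U_\mu-PU_\mu\in H^1(\Omega)$. Since $L_\gamma U_\mu=U_\mu^{(N+2)/(N-2)}=L_\gamma PU_\mu$ holds classically on $\Omega\setminus\{0\}$, a standard cutoff argument at the origin (using $\beta_-<(N-2)/2$ and the Hardy inequality to kill the error term) promotes this to the weak identity $L_\gamma w=0$ on $\Omega$. Because $0\in\Omega$, $U_\mu$ is smooth and strictly positive on $\partial\Omega$, so $w|_{\partial\Omega}=U_\mu>0$, and the weak maximum principle yields $w\ge 0$, i.e.\ $PU_\mu\le U_\mu$; the inequality $PU_\mu\ge 0$ is the one already recorded in the preamble via the same principle applied to $L_\gamma PU_\mu=U_\mu^{(N+2)/(N-2)}\ge 0$ with zero Dirichlet datum.

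For part \emph{(ii)}, the natural auxiliary function is
$$
\Psi_\mu\;:=\;U_\mu-PU_\mu-\alpha_N\mu^\Gamma H_\gamma,
$$
which lies in $H^1(\Omega)$ and, by the same removable-singularity argument together with $L_\gamma H_\gamma=0$ on $\Omega\setminus\{0\}$, satisfies $L_\gamma\Psi_\mu=0$ weakly on $\Omega$. Using $H_\gamma|_{\partial\Omega}=|x|^{-\beta_+}$ (because $G_\gamma=0$ on $\partial\Omega$ and $H_\gamma=|x|^{-\beta_+}-G_\gamma$) and a direct Taylor expansion of \eqref{bubblebis}, the boundary trace of $\Psi_\mu$ is
$$
\Psi_\mu|_{\partial\Omega}=U_\mu-\frac{\alpha_N\mu^\Gamma}{|x|^{\beta_+}}=\frac{\alpha_N\mu^\Gamma}{|x|^{\beta_+}}\Big[\big(1+\mu^{4\Gamma/(N-2)}|x|^{-4\Gamma/(N-2)}\big)^{-(N-2)/2}-1\Big]=O\!\left(\mu^{\frac{N+2}{N-2}\Gamma}\right),
$$
uniformly on $\partial\Omega$, since $|x|$ is bounded below there and the exponents add up to $\Gamma+4\Gamma/(N-2)=\frac{N+2}{N-2}\Gamma$. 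I would then compare $\Psi_\mu$ with the barrier $C\mu^{(N+2)\Gamma/(N-2)}H_\gamma$: for $C$ large enough the barrier dominates $|\Psi_\mu|$ on $\partial\Omega$ (recall that $H_\gamma$ is positive and bounded below on the compact set $\partial\Omega$), and since both functions $\pm\Psi_\mu-C\mu^{(N+2)\Gamma/(N-2)}H_\gamma$ are $L_\gamma$-harmonic on $\Omega$ with nonpositive boundary trace, the weak maximum principle yields $|\Psi_\mu|\le C\mu^{(N+2)\Gamma/(N-2)}H_\gamma$ pointwise on $\Omega$. Finally, \eqref{1922} and boundedness of $H_\gamma$ away from the origin give the global bound $H_\gamma(x)\le C'|x|^{-\beta_-}$ on $\Omega$, which closes (ii).

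Statement \emph{(iii)} is then immediate: $\mu^{(N+2)\Gamma/(N-2)}=o(\mu^\Gamma)$ as $\mu\to 0^+$ and $H_\gamma=O(|x|^{-\beta_-})$, so the two correction terms in \emph{(ii)} collapse into a single $O(\mu^\Gamma/|x|^{\beta_-})$. The main technical subtlety I anticipate is the removable-singularity step, namely ensuring that an $H^1$ weak solution of $L_\gamma u=0$ on $\Omega\setminus\{0\}$ is automatically a weak solution on all of $\Omega$. Under $\gamma<(N-2)^2/4$ this is a routine consequence of the Hardy inequality applied to cutoffs of the origin, but it is the step one must invoke explicitly before legitimately applying the weak maximum principle across $\{0\}$.
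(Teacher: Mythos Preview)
Your proof is correct and follows essentially the same route as the paper: the same auxiliary functions $U_\mu-PU_\mu$ and $U_\mu-PU_\mu-\alpha_N\mu^\Gamma H_\gamma$, the same boundary expansion on $\partial\Omega$, and the same comparison with a multiple of $H_\gamma$ via the weak maximum principle, with (iii) deduced from (ii) and \eqref{1922}. The only difference is that you spell out the removable-singularity step at $0$ more carefully than the paper, which simply invokes the $H^1$-regularity of the auxiliary functions together with the weak comparison principle.
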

\begin{proof} (i) The function $\varphi_\mu=U_\mu- PU_\mu$ solves 
$$\left\{
\begin{aligned}
&L_\gamma \varphi_\mu=0 &\qquad & \mbox{in}\,\, \Omega\setminus\left\{0\right\}\\
&\varphi_\mu=U_\mu&\qquad & \mbox{on}\,\, \partial\Omega.
\end{aligned}
\right. $$
Since $U_\mu \geq 0$ and $\varphi_\mu \in H^1(\Omega)$, by the weak maximum principle it follows that $\varphi_\mu \ge 0$ and (i) holds.

\medskip \noindent (ii) Let $W_\mu= U_\mu-PU_\mu-\alpha_N \mu^{\Gamma}H_\gamma$. Then $W_\mu$ satisfies the following problem
$$\left\{
\begin{array}{ll}
L_\gamma W_\mu=0 & \mbox{in } \Omega \setminus \{0 \} \\
W_\mu=  \frac{\alpha_N \mu^{\Gamma}}{ |x|^{\beta_-} (\mu^{\frac{4\Gamma}{N-2}} +
|x|^{\frac{4\Gamma}{N-2}} )^{\frac{N-2}{2}}}-  \frac{\alpha_N \mu^\Gamma}{|x|^{\bp}}=O \(\mu^{\frac{N+2}{N-2} \Gamma} \) & \mbox{on}\,\, \partial\Omega.
\end{array}
\right. $$
Since $W_\mu \in H^1(\Omega)$,  by weak comparison principle it follows that 
$$ W_\mu = O\( \mu^{\frac{N+2}{N-2}\Gamma} H_\gamma \)=
O \( \frac{\mu^{\frac{N+2}{N-2}\Gamma}}{|x|^{\beta_-}}\) \qquad \mbox{in}\,\,\Omega\setminus\left\{0\right\}$$  
in view of \eqref{1922}, and (ii) follows.

\medskip \noindent (iii) It follows immediately by (ii) and \eqref{1922}.
\end{proof}

\subsection{The linearized operator}
It is important to linearize the problem \eqref{1635bis} around the 
 solution $U$   defined in \eqref{bubble0bis}. More precisely, let us consider the linear problem
\begin{equation}\label{lin}
\left\{\begin{aligned}&-\Delta Z-\gamma{Z\over|x|^2}=\frac{N+2}{N-2} U^{\frac{4}{N-2}} Z\ \hbox{ in }\ \mathbb R^N\\ &Z\in D^{1, 2}(\mathbb R^N).\end{aligned}\right.
\end{equation}
Dancer, Gladiali and Grossi in   \cite{dgg}  classified all  the solutions to \eqref{lin}:
\begin{lemma}[Lemma 1.3, \cite{dgg}]\label{34}
Let $\gamma<\frac{(N-2)^2}{4}$ so that $\gamma\neq \gamma_j$ for all $j\in\mathbb N$, where $\gamma_j$ is given by \eqref{assgamma}. Then the space of solutions to \eqref{lin} has dimension 1 and is spanned by 
$$Z^\gamma(x)={1-|x|^{{4 \Gamma \over N-2}}\over |x|^{\bm} \(1+|x|^{{4 \Gamma \over N-2}}\)^{N\over2}},\ x\in\mathbb R^N. $$
If   $\gamma=\gamma_j$ for some $j\in\mathbb N,$ then the space of solutions to \eqref{lin} has dimension $1+\frac{(N+2j-2)(N+j-3)!}{(N-2)! j!}$ and is spanned by $$Z^{\gamma}(x)\quad \hbox{and}\quad Z^{\gamma}_ i(x)=\frac{|x|^{\frac{N \Gamma}{N-2}-\frac{N-2}{2}}P_{j, i}(x)}{\(1+|x|^{\frac{4 \Gamma}{N-2}}\)^{\frac N 2}},\  i=1, \ldots, \frac{(N+2j-2)(N+j-3)!}{(N-2)! j!},$$ where $\{P_{j, i}\}$  is a basis for the space $\mathbb P_j(\mathbb R^N)$ of $j-$homogeneous harmonic polynomials in $\mathbb R^N$.
\end{lemma}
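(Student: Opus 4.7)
The plan is to classify solutions by decomposing $Z \in D^{1,2}(\mathbb{R}^N)$ into spherical harmonics. Writing $Z(x) = \sum_{j \geq 0} \phi_j(r) Y_j(\theta)$, where $\{Y_j\}$ is an orthonormal basis of $L^2(S^{N-1})$ consisting of spherical harmonics corresponding to Laplace--Beltrami eigenvalues $j(N+j-2)$, each mode $\phi_j$ decouples and satisfies the radial ODE
$$-\phi_j'' - \frac{N-1}{r}\phi_j' + \frac{j(N+j-2)-\gamma}{r^2}\phi_j = \frac{N+2}{N-2} U^{4/(N-2)}\phi_j, \qquad r \in (0,\infty),$$
with the natural weighted $D^{1,2}$-integrability requirement. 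Effectively, the $j$-th mode replaces $\gamma$ by the shifted Hardy coefficient $\gamma - j(N+j-2)$, so the classification question reduces to a one-parameter Sturm--Liouville problem for each $j$.

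For the radial mode $j=0$, the scaling invariance $U_\mu(x)=\mu^{-(N-2)/2} U(x/\mu)$ of the family of solutions to \eqref{1635bis} yields, by differentiating in $\mu$ at $\mu=1$, the explicit element $Z^\gamma$ displayed in the statement, which belongs to $D^{1,2}(\mathbb{R}^N)$. A Frobenius analysis of the ODE at $r=0$ shows that the two local solutions behave like $|x|^{-\beta_-}$ and $|x|^{-\beta_+}$; only the first is compatible with $D^{1,2}$-integrability near the origin, and a similar asymptotic matching at infinity (using that $U^{4/(N-2)}$ decays fast) singles out a one-dimensional subspace, necessarily spanned by $Z^\gamma$.

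For each $j \geq 1$, I would perform the Emden--Fowler-type change of variables suggested by \eqref{1209}, namely $\psi_j(s) = s^{a_j}\phi_j(s^{b_j})$ with exponents $a_j,b_j$ chosen in terms of $N$, $\gamma$ and $j$, which annihilates the Hardy-type singular potential and reduces the ODE to a hypergeometric equation whose solutions are explicit. The $D^{1,2}$-matching between the indicial exponents at $r=0$ and the decay exponents at $r=\infty$ then imposes an algebraic condition on the parameters which, upon simplification, becomes precisely $\gamma = \gamma_j$ as in \eqref{assgamma}. At resonance, one verifies by direct computation (using the harmonicity of $P_{j,i}$ and the product rule for $-\Delta$) that the functions $Z^\gamma_i$ displayed in the statement solve the linearized equation, lie in $D^{1,2}(\mathbb{R}^N)$, and, together with $Z^\gamma$, span the space of solutions, with multiplicity matching the standard dimension formula $\dim \mathbb{P}_j(\mathbb{R}^N) = \frac{(N+2j-2)(N+j-3)!}{(N-2)! \, j!}$.

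The main obstacle is the exclusion step for non-resonant $\gamma$ and $j\geq 1$, which must rule out any stray $D^{1,2}$-solution. The cleanest route is the explicit hypergeometric reduction just described, where non-resonance translates into the incompatibility of the two local $D^{1,2}$-branches under analytic continuation. Alternatively, one may run a Pohozaev-type identity for $\phi_j$ on $(0,R)$ and let $R \to \infty$ to close a boundary term which vanishes only at the $\gamma_j$'s; or, more abstractly, invoke spectral properties of the operator $L_\gamma - \frac{N+2}{N-2} U^{4/(N-2)}$ sector by sector, using sharp Hardy-type inequalities to relate the resonance condition to the vanishing of an eigenvalue.
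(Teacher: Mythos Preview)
The paper does not prove this lemma at all: it is stated as Lemma~1.3 of \cite{dgg} and simply quoted, so there is no proof in the paper to compare your proposal against. Your outline via spherical-harmonic decomposition, scaling differentiation for the radial mode, and an Emden--Fowler/hypergeometric reduction for the higher modes is indeed the standard route and is essentially what is carried out in \cite{dgg}; if you want to flesh it out, the main work lies in making the exclusion step for $j\geq 1$ rigorous (your sketch correctly identifies this as the delicate point), and you should consult \cite{dgg} directly for the details rather than the present paper.
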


\noindent Given $\mathcal G$  a closed subgroup in the space of linear isometries $\mathcal O(N)$ of $\mathbb R^N,$ we say that a domain $\Omega\subset\mathbb R^N$ is  $\mathcal G-$invariant if $\mathcal G  x\subset \Omega$ for any $x\in \Omega $ and a function  $u:\Omega\to\mathbb R$ is $\mathcal G-$invariant if $u(g x)=u(x) $ for any $x\in \Omega  $ and $g\in\mathcal G .$

 \begin{definition}\label{sym}
If $\gamma=\gamma_j$ for some $j \in \mathbb N$ (see \eqref{assgamma}), $\Omega$ is said to be a $j-$admissible domain if  $\Omega$ is $\mathcal G_j-$invariant  for some closed subgroup  $\mathcal G_j \subset \mathcal O(N)$  so that
 $\int\limits_{\mathbb R^N}Z^{\gamma}_{ i}(x) \phi(x)dx=0$ for any $i$ and any $\mathcal G_j-$invariant function $\phi\in D^{1,2}(\mathbb R^N).$ 
\end{definition}

\begin{remark}\label{adm}
 A ball is $j-$admissible   for  all $j \in \mathbb N$ by taking $\mathcal G_j=O(N)$. Any even domain $\Omega$ (i.e. $x\in\Omega$ iff $-x\in\Omega$) is $j-$admissible for all $j \in \mathbb N$ odd by taking $\mathcal G_j=\{Id,-Id\}$, since any homogeneous harmonic polynomials of odd degree is odd.\end{remark}

\begin{remark}\label{1d}
In the following we will work in a setting where  the space of solutions to \eqref{lin} is simply generated by $Z^\gamma.$ In a general domain, we will require either   $\gamma>0$ or  $\gamma\le 0$ with $\gamma\neq \gamma_j$ for all $j \in \mathbb N$. If $\gamma=\gamma_j$ for some $j \in \mathbb N$, we will assume that $\Omega$ is a $j-$admissible domain and we will work in the space of $\mathcal G_j-$invariant functions. Indeed, by  Lemma \ref{34} we immediately deduce that  the space of $\mathcal G_j-$invariant   solutions to \eqref{lin} is spanned by $Z^\gamma.$ 
\end{remark}

\medskip \noindent From now on we let $Z=Z^\gamma$ and we omit the dependence on $\gamma$. 
It is clear that the function 
$$Z_\mu(x)=\mu ^{-\frac{N-2}{2}}Z\left(\frac{x}{\mu}\right)=
\frac{\mu^\Gamma (\mu^{\frac{4\Gamma}{N-2}}-|x|^{\frac{4\Gamma}{N-2}})}{|x|^{\beta_-}(\mu^{\frac{4\Gamma}{N-2}}+|x|^{\frac{4\Gamma}{N-2}})^{\frac{N}{2}}},\quad \ x\in \mathbb R^N,$$
solves the linear problem
$$-\Delta Z_\mu-\gamma{Z_\mu\over|x|^2}=\frac{N+2}{N-2} U_\mu^{\frac{4}{N-2}} Z_\mu\ \hbox{in}\ \mathbb R^N.$$
We need to project the function $Z_\mu$ to fit Dirichlet boundary condition, i.e. we consider the function $PZ_\mu=\i^*\(\frac{N+2}{N-2} U_\mu^{\frac{4}{N-2}}  Z_\mu\)$ according to \eqref{is}. We need an expansion of $PZ_\mu$ with respect to $\mu$.
\begin{lemma}\label{prop-proZ} As $\mu \to 0$ there hold uniformly in $\Omega$  
\begin{itemize}
\item[(i)] $PZ_\mu=Z_\mu+\mu^\Gamma H_\gamma+O\(\frac{\mu^{\frac{N+2}{N-2}\Gamma}}{|x|^{\beta_-}}\)$
\item[(ii)] $PZ_\mu= Z_\mu+O\({\mu^{\Gamma}\over |x|^\bm}\)$.
\end{itemize}
\end{lemma}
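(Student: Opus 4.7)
The plan is to mirror the strategy used for $PU_\mu$ in Lemma \ref{prop-pro}: namely, subtract off the singular correction $-\mu^\Gamma H_\gamma$ and control the resulting $L_\gamma$-harmonic error via the weak comparison principle, which is available because $\gamma<\frac{(N-2)^2}{4}$.

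First, by definition $PZ_\mu=\i^*(\frac{N+2}{N-2}U_\mu^{4/(N-2)}Z_\mu)$ solves $L_\gamma PZ_\mu=\frac{N+2}{N-2}U_\mu^{4/(N-2)}Z_\mu$ in $\Omega$ with zero boundary data. Since $Z_\mu$ satisfies the same equation on $\mathbb{R}^N$, setting
$$W_\mu=Z_\mu-PZ_\mu+\mu^\Gamma H_\gamma,$$
we have $L_\gamma W_\mu=0$ in $\Omega\setminus\{0\}$ and $W_\mu\in H^1(\Omega)$ (both $Z_\mu-PZ_\mu$ and $H_\gamma$ belong to this class).

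The next step is to compute the boundary trace of $W_\mu$. For $x\in\partial\Omega$ we have $|x|\ge c>0$, so factoring $|x|^{4\Gamma/(N-2)}$ out of $(\mu^{4\Gamma/(N-2)}+|x|^{4\Gamma/(N-2)})^{N/2}$ and expanding in powers of $\mu^{4\Gamma/(N-2)}/|x|^{4\Gamma/(N-2)}$ yields
$$Z_\mu(x)=-\frac{\mu^\Gamma}{|x|^{\bp}}+O\(\mu^{\frac{N+2}{N-2}\Gamma}\)\qquad\text{uniformly on }\partial\Omega,$$
where I used $\beta_-+2\Gamma=\bp$ to simplify the exponent. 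Recalling that (with the normalization $c_1=1$) one has $H_\gamma=1/|x|^{\bp}$ on $\partial\Omega$, this gives
$$W_\mu=Z_\mu+\mu^\Gamma H_\gamma=O\(\mu^{\frac{N+2}{N-2}\Gamma}\)\quad\text{on }\partial\Omega.$$

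Now I would conclude by weak comparison. Since $H_\gamma$ is bounded away from $0$ on $\partial\Omega$, the boundary data satisfy $|W_\mu|\le C\mu^{\frac{N+2}{N-2}\Gamma}H_\gamma$ on $\partial\Omega$. The same comparison argument employed in the proof of Lemma \ref{prop-pro}(ii) (valid because $W_\mu\in H^1$ and $\gamma<\frac{(N-2)^2}{4}$) then propagates this to
$$|W_\mu|\le C\,\mu^{\frac{N+2}{N-2}\Gamma}\,H_\gamma=O\(\frac{\mu^{\frac{N+2}{N-2}\Gamma}}{|x|^{\bm}}\)\qquad\text{in }\Omega\setminus\{0\}$$
by \eqref{1922}, proving (i). Assertion (ii) is then immediate: combining (i) with the estimate $\mu^\Gamma H_\gamma=O(\mu^\Gamma/|x|^{\bm})$ (again via \eqref{1922}) and observing that the error in (i) is of lower order in $\mu$ yields $PZ_\mu-Z_\mu=O(\mu^\Gamma/|x|^{\bm})$.

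The only delicate point is the application of the comparison principle to $L_\gamma$ on a domain containing the singular point of the potential; this is precisely the step performed in Lemma \ref{prop-pro}(ii) for $U_\mu-PU_\mu-\alpha_N\mu^\Gamma H_\gamma$, and the argument carries over verbatim because $W_\mu$ enjoys the same $H^1$-regularity as the function considered there. No new compatibility at the origin has to be checked beyond this.
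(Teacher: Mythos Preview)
Your proof is correct and follows exactly the approach indicated by the paper, which simply says ``We argue as in the proof of Lemma \ref{prop-pro}.'' You have spelled out precisely that argument: subtract the correction $\mu^\Gamma H_\gamma$, verify the boundary trace is $O(\mu^{\frac{N+2}{N-2}\Gamma})$, and invoke the weak comparison principle with $H_\gamma$ as barrier---nothing differs from the intended proof.
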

\begin{proof}We argue as in the proof of  Lemma \ref{prop-pro}.
\end{proof}

\subsection{The tower}
Let $k\ge1$ be a fixed integer. We look for  solutions to \eqref{1422bis}, or equivalently to \eqref{Eq1b}, of the form 
\begin{equation}\label{sol}
u= \sum_{j=1}^k (-1)^j P U_{\mu_j} +\Phi,
\end{equation}
where 
\begin{equation}\label{mujbis}
\mu_1=e^{-\frac{d_1}{\eps}} 
\end{equation} 
when $\Gamma=1$ and 
\begin{equation}\label{muj}
\mu_{j }= d_{j } \eps^{\sigma_j}, \ j=1,\ldots,k,
\end{equation} 
when $\Gamma>1$, with $d_1,\dots,d_k\in(0,+\infty)$ and $\sigma_j$ given by \eqref{sigmaj}. The choice \eqref{mujbis}-\eqref{muj} of the concentration rates is motivated by the validity of the following crucial relations: for $\Gamma=1$
\begin{equation}\label{muj2bis}
\mu_1^2 \sim \eps\mu_1^2 \log \frac{1}{\mu_1} 
\end{equation} 
and for $\Gamma>1$
\begin{equation}\label{muj2}\mu_1^{2\Gamma}\sim\eps\mu_1^2\quad \hbox{and}\quad \({\mu_j\over\mu_{j-1}}\)^{\Gamma}\sim \eps\mu_j^2, \ j=2,\ldots,k .
\end{equation} 
To build solutions of given sign with a simple blow-up point at the origin, we need to assume $\Gamma \geq 1$ and consider the case $k=1$. The assumption $\Gamma>2$ is necessary when constructing  sign-changing solutions, i.e. $k\ge2$, to guarantee $\sigma_1,\ldots,\sigma_k>0$.

\medskip \noindent  The remainder term $\Phi$ shall be splitted into the sum of $k$ terms of different order:
\begin{equation}\label{resto}
\Phi=\sum_{\ell=1}^k \phi_{\ell},
\end{equation}
where  each remainder term $\phi_{\ell}$ only depends on  $\mu_{1},\dots,\mu_\ell $ and belongs to the space $\mathcal K^\bot_{\ell}$ defined as follows. For any $\ell=1,\dots,k$  we define the subspace $\mathcal K_{\ell}={\rm Span}\left\{PZ_{\mu_1 },\dots,PZ_{\mu_\ell}\right\} $ and either
$$ \mathcal K^\bot_{\ell}=\left\{\phi\in H^1_0(\Omega)\ :\ \langle \phi, PZ_{\mu_i}\rangle  =0,\     i=1,\dots,\ell\right\}$$
when $\Omega$ is a general domain and $\gamma\not=\gamma_j$ for all $j \in \mathbb N$ or
$$ \mathcal K^\bot_{\ell}=\left\{\phi\in H^1_0(\Omega)\ :\ \hbox{$\phi$ is $\mathcal G_j-$invariant,}\ \langle \phi, PZ_{\mu_i}\rangle  =0,\     i=1,\dots,\ell\right\}$$
when $\Omega$ is $j-$admissible and $\gamma=\gamma_j$ for some $j \in \mathbb N$ (see Remark \ref{1d}). We also define $\Pi_{\ell}$ and $\Pi^\bot_{\ell}$ as the projections of the Sobolev space $H^1_0(\Omega)$ onto the respective subspaces $\mathcal K_{\ell}$ and $\mathcal K^\bot_{\ell}$. 

\medskip \noindent In order to solve \eqref{Eq1b}, we shall solve the system
\begin{eqnarray} \label{bif}
&&\Pi^\bot_{k}\left\{u-\i^*\left[|u|^{\frac{4}{N-2}}u+\eps u \right]\right\}=0\\
&&\Pi_{k}\left\{u-\i^*\left[|u|^{\frac{4}{N-2}}u+\eps u \right]\right\}=0\nonumber
\end{eqnarray}
for $u$ given as in \eqref{sol}. For sake of simplicity, for any $j=1,\dots,k$ we set $U_j=U_{\mu_j}$ and $Z_j=Z_{\mu_j}$.

\section{The Ljapunov-Schmidt procedure}\label{sec3}
\noindent In this section we give an outline for the proof of Theorem \ref{torri}. To make the presentation more clear, all the results are stated without proofs, which are postponed into the Appendix.

\subsection{The remainder term: solving equation \eqref{bif}}
In order to find the remainder term $\Phi,$ we shall find functions $\phi_{\ell}$, $\ell=1, \ldots, k,$ which solve the following system:
\begin{equation}\label{sistema}
\left\{
\begin{aligned}
&\mathcal E_1+\mathcal L_1(\phi_{1})+\mathcal N_1(\phi_{1})=0\\
&\mathcal E_2+\mathcal L_2(\phi_{2})+\mathcal N_2(\phi_{1}, \phi_{2})=0\\
&\ldots\\
&\ldots\\
&\mathcal E_k+\mathcal L_k(\phi_{k})+\mathcal N_k(\phi_{1}, \ldots, \phi_{k})=0.\\
\end{aligned}
\right.
\end{equation}
Setting $f(u)=|u|^{\frac{4}{N-2}}u $, the error terms $\mathcal E_\ell$ are defined by
$$\begin{aligned}
\mathcal E_\ell&=\Pi^\bot_{\ell}\left\{(-1)^\ell PU_\ell-\i^*\left[f\left(\sum_{j=1}^\ell (-1)^jPU_j\right)-f\left(\sum_{j=1}^{\ell-1}(-1)^jPU_j\right)+\eps (-1)^\ell PU_\ell\right]\right\} \end{aligned} $$
and the linear operators $\mathcal L_\ell$ are given by
$$\mathcal L_\ell (\phi)=\Pi^\bot_{\ell}\left\{\phi-\i^*\left[f'\left(\sum_{j=1}^\ell (-1)^jPU_j\right)\phi+\eps \phi \right]\right\},$$
with the convention that a sum over an empty set of indices is zero. The nonlinear terms $\mathcal N_\ell$ have the form
\begin{equation}\label{Nj}
\begin{aligned}
&\mathcal N_\ell(\phi_1, \ldots, \phi_\ell) =\\ &\Pi^\bot_{\ell}\left\{-\i^*\left[f\left(\sum_{j=1}^\ell \((-1)^jPU_j+\phi_{j}\)\right)-f\left(\sum_{j=1}^\ell (-1)^jPU_j\right)-f'\left(\sum_{j=1}^\ell (-1)^jPU_j\right)\phi_{\ell}\right.\right.\\
&\left.\left.-f\left(\sum_{j=1}^{\ell-1}\((-1)^jPU_j+\phi_{j}\)\right)+f\left(\sum_{j=1}^{\ell-1} (-1)^jPU_j\right)\right]\right\}.
\end{aligned}
\end{equation}

\medskip \noindent In order to solve system \eqref{sistema}, first we need to evaluate the $H^1_0(\Omega)-$ norm of the error terms $\mathcal E_\ell $.
\begin{lemma}\label{errorej}
For any $\ell=1, \ldots, k$ and any compact subset $A_\ell\subset (0,+\infty)^{\ell} $ there exist $C, \ \eps_0>0$ such that for any $\eps\in(0,\eps_0)$  and for any $(d_1,\dots,d_\ell)\in A_\ell$ there holds 
\begin{equation} \label{115}
\|\mathcal E_1\|=\left\{ \begin{array}{ll} O\(\eps \mu_1^\Gamma \) &\hbox{if } 1\leq \Gamma <2\\
O\(\eps \mu_1^2 \log^{\frac{N+2}{2N}} \frac{1}{\mu_1} \) &\hbox{if } \Gamma=2\\
O\(\eps \mu_1^2\) &\hbox{if } \Gamma>2 \end{array} \right.+\left\{\begin{array}{ll} O\(\mu_1^{2\Gamma}\) &\mbox{if }3\le N \leq 5\\
O\(\mu_1^{2\Gamma}\log^{\frac 23}\frac{1}{\mu_1}\) &\mbox{if } N= 6\\
O\(\mu_1^{{N+2\over N-2}\Gamma}\) &\mbox{if } N\ge 7 \end{array}\right.
\end{equation}
and 
\begin{equation} \label{116}
\|\mathcal E_\ell\|=O(\eps \mu_\ell^2)+\left\{\begin{array}{ll}
O\(({\mu_\ell\over\mu_{\ell-1}} )^{\Gamma}\) &\mbox{if } 3 \le N \leq 5\\
O\( ({\mu_\ell \over \mu_{\ell-1}})^{\frac{N+2}{N-2}\frac{\Gamma}{2}} \log^{\frac 23} \frac{1}{\mu_\ell} \) &\mbox{if } N\ge 6
\end{array}\right.\end{equation}
for any $l=2,\ldots, k$, when $k\geq 2$ and $\Gamma>2$.
\end{lemma}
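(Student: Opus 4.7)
The plan is to write each $\mathcal{E}_\ell$ as $\Pi^\bot_\ell \, \i^*(R_\ell)$ for an explicit residual source $R_\ell$, and then estimate $\|\mathcal E_\ell\|\le C|R_\ell|_{\frac{2N}{N+2}}$ via the continuity of $\i^*$ together with the fact that $\|\Pi^\bot_\ell\|\le 1$. Using that $f$ is odd and \eqref{PU}, the identity $(-1)^\ell PU_\ell=\i^*\bigl(f((-1)^\ell U_\ell)\bigr)$ lets me rewrite the residual as
$$R_\ell = f\bigl((-1)^\ell U_\ell\bigr)-f\Bigl(\textstyle\sum_{j=1}^\ell (-1)^j PU_j\Bigr)+f\Bigl(\textstyle\sum_{j=1}^{\ell-1}(-1)^j PU_j\Bigr)-\eps(-1)^\ell PU_\ell,$$
which naturally decomposes into (i) a single-bubble projection error $f((-1)^\ell U_\ell)-f((-1)^\ell PU_\ell)$, (ii) a cross-interaction term $f((-1)^\ell PU_\ell)+f(\sum_{j<\ell}(-1)^j PU_j)-f(\sum_{j\le\ell}(-1)^j PU_j)$ (absent when $\ell=1$), and (iii) the linear perturbation $\eps PU_\ell$.

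For (iii) I would compute $|U_\mu|_{\frac{2N}{N+2}}$ by rescaling $y=x/\mu$: the factor $\mu^{\frac{4N}{N+2}}$ multiplies $\int_{\Omega/\mu} U^{\frac{2N}{N+2}}$, whose behavior depends on whether the integrand at infinity, which decays like $|y|^{-\frac{2N\beta_+}{N+2}}$, is integrable. This gives precisely the trichotomy $\mu^\Gamma$ (when $\Gamma<2$, boundary truncation dominates), $\mu^2\log^{\frac{N+2}{2N}}(1/\mu)$ (at $\Gamma=2$), $\mu^2$ (when $\Gamma>2$), explaining the three cases in the first bracket of \eqref{115}. Since $\mu_\ell/\mu_{\ell-1}\to 0$ and $\Gamma>2$ is assumed for $\ell\ge 2$, one only needs the $\eps\mu_\ell^2$ bound in \eqref{116}.

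For (i), MVT gives $|f(U_\ell)-f(PU_\ell)|\le \frac{N+2}{N-2}U_\ell^{\frac4{N-2}}(U_\ell-PU_\ell)$, and by Lemma \ref{prop-pro}(ii) the correction $U_\ell-PU_\ell$ equals $\alpha_N\mu_\ell^\Gamma H_\gamma+O(\mu_\ell^{\frac{N+2}{N-2}\Gamma}/|x|^{\beta_-})$. The leading piece $\mu_\ell^\Gamma U_\ell^{\frac4{N-2}}H_\gamma$ is computed by splitting $\Omega$ into $\{|x|\le\mu_\ell\}$ and $\{|x|\ge\mu_\ell\}$ and using $H_\gamma\sim c_2|x|^{-\beta_-}$ near zero. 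For $\ell=1$ this produces the second bracket in \eqref{115}, with the logarithm at $N=6$ arising from a borderline integral and the changeover at $N=7$ from the nonlinearity becoming sublinear (so the quadratic remainder $(U_\ell-PU_\ell)^2\cdot U_\ell^{\frac{6-N}{N-2}}$ starts to dominate, forcing the $\mu_1^{\frac{N+2}{N-2}\Gamma}$ rate). This split-by-dimension behavior is the main source of case analysis.

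For (ii), which controls the tower interaction when $\ell\ge 2$, I would exploit the strong scale separation $\mu_\ell\ll\mu_{\ell-1}$. On the region where $U_\ell$ is large ($|x|\lesssim\sqrt{\mu_\ell\mu_{\ell-1}}$), Taylor expansion of $f$ around $(-1)^\ell PU_\ell$ shows the interaction is controlled by $U_\ell^{\frac{4}{N-2}}\sum_{j<\ell}PU_j$, which is of order $U_\ell^{\frac{4}{N-2}}\mu_{\ell-1}^{-\frac{N-2}{2}}$ times a Green-type weight; on the complementary region one controls $U_\ell$ by $\mu_\ell^{\Gamma}/|x|^{\beta_+}$ and integrates. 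Putting the two regions together yields the $(\mu_\ell/\mu_{\ell-1})^\Gamma$ rate for $N\le 5$ and the $(\mu_\ell/\mu_{\ell-1})^{\frac{N+2}{2(N-2)}\Gamma}\log^{\frac23}(1/\mu_\ell)$ rate for $N\ge 6$. The main obstacle is performing these pointwise estimates uniformly on the annular overlap zone $|x|\sim\sqrt{\mu_\ell\mu_{\ell-1}}$, where neither bubble dominates; there, the estimates of Lemma \ref{prop-pro}(iii) and the precise law $\beta_-+\beta_+=N-2$ allow a clean $L^{\frac{2N}{N+2}}$ bound via Hölder's inequality, closing the argument.
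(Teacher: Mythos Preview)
Your approach is essentially the paper's: bound $\|\mathcal E_\ell\|$ by the $L^{\frac{2N}{N+2}}$-norm of the source via the continuity of $\i^*$, split into the linear perturbation $\eps PU_\ell$, the single-bubble projection error $f(U_\ell)-f(PU_\ell)$, and (for $\ell\ge 2$) the interaction term, then estimate each piece by rescaling and Lemma~\ref{prop-pro}. Two small points: your heuristic for the $N\ge 7$ changeover is off --- there is no quadratic remainder when $\frac{N+2}{N-2}<2$ (the expression $U_\ell^{\frac{6-N}{N-2}}(U_\ell-PU_\ell)^2$ would carry a negative power of $U_\ell$); the rate $\mu_1^{\frac{N+2}{N-2}\Gamma}$ comes instead from the \emph{linear} term $U_1^{\frac{4}{N-2}}(U_1-PU_1)$ itself, whose rescaled $L^{\frac{2N}{N+2}}$-integral $\int_{\Omega/\mu_1}U^{\frac{4}{N-2}}|y|^{-\beta_-}$ diverges precisely when $N\ge 7$ (the relevant exponent identity is $\frac{2N}{N+2}(\beta_-+\frac{4\beta_+}{N-2})=N-\frac{2N(N-6)}{N^2-4}\Gamma$). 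For the interaction term the paper is slightly more systematic than your two-region split: it decomposes $\Omega$ into the full chain of annuli $\mathcal A_h=B_{\sqrt{\mu_{h-1}\mu_h}}\setminus B_{\sqrt{\mu_h\mu_{h+1}}}$, $h=0,\dots,\ell$, and on each $\mathcal A_h$ expands $f$ around the locally dominant bubble, but the dominant contribution indeed comes from the $\mathcal A_{\ell-1}/\mathcal A_\ell$ interface you identified.
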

\noindent Next, we need to understand the invertibility of the linear operators $\mathcal L_\ell  $.
This is done in the following lemma whose proof can be carried out as in \cite{mp}. 
 \begin{lemma}\label{lineare}  
For any $\ell=1, \ldots, k$ and any compact subset $A_\ell\subset (0,+\infty)^{\ell} $ there exist $C,\eps_0>0$ such that for any $\eps\in(0,\eps_0)$  and for any $(d_1,\dots,d_\ell )\in A_\ell$  there holds
\begin{equation}
\|\mathcal L_\ell (\phi_{\ell})\|\geq C  \|\phi_{\ell}\|\ \hbox{for any $\phi_\ell \in \mathcal K_{\ell}^\bot$}.\end{equation}
In particular $\mathcal L_\ell^{-1}:\mathcal K_\ell^\bot \to K_\ell^\bot$ is well defined for $\eps \in (0,\eps_0)$ and $(d_1,\dots,d_\ell )\in A_\ell$ and has uniformly bounded operatorial norm.
\end{lemma}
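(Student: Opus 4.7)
\medskip\noindent\textbf{Proof proposal for Lemma \ref{lineare}.} I would argue by contradiction along the classical blow-up pattern used in Ljapunov--Schmidt reductions with towers of bubbles, adapting the argument of \cite{mp} to the presence of the Hardy operator $L_\gamma$. Suppose the uniform coercivity fails: there exist $\eps_n\to0^+$, points $(d_1^n,\dots,d_\ell^n)$ in the compact set $A_\ell$ (converging, up to a subsequence, to $(\bar d_1,\dots,\bar d_\ell)\in A_\ell$), and $\phi_n\in\mathcal K_\ell^\bot$ with $\|\phi_n\|=1$ and $\|\mathcal L_\ell(\phi_n)\|\to 0$. Setting $W_n=\sum_{j=1}^\ell(-1)^jPU_{\mu_j^n}$, the defining relation for $\mathcal L_\ell$ gives scalars $c_i^n$ such that
$$L_\gamma\phi_n-\tfrac{N+2}{N-2}|W_n|^{\frac{4}{N-2}}\phi_n-\eps_n\phi_n=R_n+\sum_{i=1}^\ell c_i^n\,\tfrac{N+2}{N-2}U_{\mu_i^n}^{\frac{4}{N-2}}Z_{\mu_i^n},$$
with $R_n\to 0$ in $H^{-1}(\Omega)$.

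\medskip\noindent The heart of the argument is the rescaling at each scale. For each $i=1,\dots,\ell$ I would define $\tilde\phi_n^i(y)=(\mu_i^n)^{\frac{N-2}{2}}\phi_n(\mu_i^n y)$, which is bounded in $D^{1,2}(\mathbb R^N)$, hence weakly convergent (along a subsequence) to some $\phi^i$. Since, by the choice \eqref{muj} and the monotonicity of $\sigma_j$, the ratios satisfy $\mu_j^n/\mu_i^n\to 0$ for $j>i$ and $\mu_i^n/\mu_j^n\to 0$ for $j<i$, the bubbles $PU_{\mu_j^n}$ with $j>i$ shrink to the origin and those with $j<i$ spread out in the $y$ variable. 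A careful annular decomposition (splitting $\Omega$ into rings adapted to the nested scales $\mu_1^n\gg\mu_2^n\gg\dots\gg\mu_\ell^n$) shows that, after rescaling, $|W_n(\mu_i^n y)|^{\frac{4}{N-2}}$ converges to $U(y)^{\frac{4}{N-2}}$ in $L_{\hbox{loc}}^{N/2}(\mathbb R^N\setminus\{0\})$, while the Hardy potential is scale-invariant. Passing to the limit in the equation one gets that $\phi^i\in D^{1,2}(\mathbb R^N)$ solves
$$-\Delta\phi^i-\tfrac{\gamma}{|y|^2}\phi^i=\tfrac{N+2}{N-2}U^{\frac{4}{N-2}}\phi^i\quad\hbox{in }\mathbb R^N.$$

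\medskip\noindent The orthogonality $\langle\phi_n,PZ_{\mu_i^n}\rangle=0$, combined with Lemma \ref{prop-proZ}(ii) to replace $PZ_{\mu_i^n}$ by $Z_{\mu_i^n}$ modulo negligible terms, transfers in the limit to $\langle\phi^i,Z\rangle_{D^{1,2}}=0$. In the non-resonant case $\gamma\neq\gamma_j$, Lemma \ref{34} implies that the linearized kernel is one-dimensional and spanned by $Z$, so $\phi^i\equiv 0$. In the resonant case $\gamma=\gamma_j$, the $\mathcal G_j$-invariance of $\phi_n$ is inherited by $\phi^i$, and by the $j$-admissibility assumption in Definition \ref{sym} the invariant solutions of the limiting linearized equation are again reduced to $\Span\{Z\}$ (see Remark \ref{1d}), yielding $\phi^i\equiv 0$ once more. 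Testing then the equation against each $PZ_{\mu_i^n}$ and using that the diagonal terms $\|PZ_{\mu_i^n}\|^2$ are bounded below while the off-diagonal ones are negligible by the separated scales, one deduces $c_i^n\to 0$.

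\medskip\noindent Finally I would test the equation against $\phi_n$ itself to get
$$\|\phi_n\|^2=\tfrac{N+2}{N-2}\int_\Omega|W_n|^{\frac{4}{N-2}}\phi_n^2+\eps_n|\phi_n|_2^2+o(1).$$
The vanishing of every profile $\phi^i$ together with the annular decomposition shows that $\int_\Omega|W_n|^{\frac{4}{N-2}}\phi_n^2\to 0$; the term $\eps_n|\phi_n|_2^2$ also vanishes since $\phi_n$ is bounded in $H^1_0(\Omega)$. This contradicts $\|\phi_n\|=1$ and proves the coercivity. Bounded invertibility of $\mathcal L_\ell:\mathcal K_\ell^\bot\to\mathcal K_\ell^\bot$ then follows from the Fredholm structure of $\hbox{Id}-T$ with $T:\phi\mapsto\Pi_\ell^\bot\i^*[f'(W_n)\phi+\eps_n\phi]$ compact. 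The main obstacle I foresee is the accurate control of $|W_n|^{\frac{4}{N-2}}$ across annular transition regions between the nested scales $\mu_i^n$ --- since all bubbles concentrate at the same point $0$, the Hardy weight $|x|^{-2}$ interacts non-trivially with each rescaled profile, and the reduction of $f'(W_n)$ to the single-bubble weight $U^{\frac{4}{N-2}}$ in each rescaling requires estimates on $|PU_{\mu_j^n}(\mu_i^n y)|^{\frac{4}{N-2}}$ for $j\neq i$ that are uniform in suitable weighted Lebesgue spaces.
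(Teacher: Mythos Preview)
Your proposal is correct and follows precisely the approach the paper has in mind: the authors omit the proof and simply state that it ``can be carried out as in \cite{mp}'', i.e.\ the standard contradiction/blow-up argument for towers of bubbles, which is exactly what you sketch. The only adaptations needed over \cite{mp} are the scale-invariance of the Hardy term and the use of Lemma \ref{34} (together with Remark \ref{1d} in the $j$-admissible case) in place of the classical nondegeneracy of the Aubin--Talenti bubble, both of which you have identified correctly.
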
 
\noindent Finally, we are able to solve system \eqref{sistema}. This is done in the following proposition, whose proof in the Appendix relies on a sophisticated contraction mapping argument.
\begin{proposition}\label{phij}  Given $A \subset  (0,+\infty)^{k} $ compact, there exists $\eps_0>0$ such that for any $\eps\in(0,\eps_0)$  there exist
$C^1-$maps $(d_1,\dots,d_k )\in A\to \phi_{\ell, \eps}=\phi_{\ell, \eps}(d_1,\dots,d_\ell)  \in \mathcal K^\bot_{\ell}$, $\ell=1,\ldots,k$,  which solve \eqref{sistema} and satisfy uniform estimates:
\begin{eqnarray} 
\|\phi_{1, \eps} \|=O(\|\mathcal E_1\|), \quad  \|\phi_{\ell, \eps}\| = O\( ({\mu_\ell\over\mu_{\ell-1}} )^{\Gamma} +({\mu_\ell\over\mu_{\ell-1}} )^{\frac{\Gamma}{2}+1}+ ({\mu_\ell \over \mu_{\ell-1}})^{\frac{N+2}{2(N-2)}\Gamma} \log^{\frac 23} \frac{1}{\mu_\ell}\) \label{stimaphi1} 
\end{eqnarray}
for $l\geq 2$ and
\begin{eqnarray} 
\|\nabla_{(d_1,\dots,d_\ell)}\phi_{\ell, \eps}\|=o(1) \quad \ell=1,\ldots,k. \label{stimaphi2}
\end{eqnarray}
Moreover, there exists $\rho>0$ so that 
\begin{equation}\label{stimalinfty}
 |\phi_{\ell, \eps}(x)|=O\( 1\over \mu_\ell^{\Gamma} |x|^\bm\)\ \hbox{if}\ x\in B_{\rho \mu_\ell}(0).
\end{equation}  
\end{proposition}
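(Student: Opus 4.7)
\textbf{Proof proposal for Proposition \ref{phij}.} The plan is to exploit the triangular structure of system \eqref{sistema}: the $\ell$-th equation involves only $\phi_1,\dots,\phi_\ell$, so I will solve it inductively on $\ell$. At step $\ell$, assuming $\phi_{1,\eps},\dots,\phi_{\ell-1,\eps}$ have already been constructed and satisfy \eqref{stimaphi1}, the invertibility of $\mathcal{L}_\ell:\mathcal{K}_\ell^\perp\to\mathcal{K}_\ell^\perp$ (Lemma \ref{lineare}) lets me rewrite the $\ell$-th equation as the fixed-point problem
$$\phi_\ell=T_\ell(\phi_\ell):=-\mathcal{L}_\ell^{-1}\bigl(\mathcal{E}_\ell+\mathcal{N}_\ell(\phi_{1,\eps},\dots,\phi_{\ell-1,\eps},\phi_\ell)\bigr)$$
on the closed ball $B_\ell=\{\phi\in\mathcal{K}_\ell^\perp:\|\phi\|\leq C_\ell R_\ell\}$, where $R_\ell$ is the RHS of \eqref{stimaphi1} and $C_\ell$ is a large constant.

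The contraction is verified from three ingredients. First, Lemma \ref{errorej} combined with relations \eqref{muj2bis}--\eqref{muj2} gives $\|\mathcal{L}_\ell^{-1}\mathcal{E}_\ell\|\lesssim R_\ell$. Second, inspecting the structure of $\mathcal{N}_\ell$ in \eqref{Nj}, each summand is either a difference $f(A+\phi_\ell)-f(A)-f'(A)\phi_\ell$ which is super-linear in $\phi_\ell$, or involves cross-terms between $\phi_\ell$ and the already-fixed $\phi_1,\dots,\phi_{\ell-1}$ which are small. Using H\"older's inequality and the continuity $\i^*:L^{2N/(N+2)}\to H_0^1$, on $B_\ell$ one obtains
$$\|\mathcal{N}_\ell(\phi_{1,\eps},\dots,\phi_{\ell-1,\eps},\phi_\ell)\|\leq C\bigl(\|\phi_\ell\|^{\min(2,\frac{N+2}{N-2})}+\|\phi_\ell\|\cdot o(1)\bigr),$$
so for $\eps$ small, $T_\ell$ sends $B_\ell$ into itself. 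Third, the same estimate applied to differences yields $\|T_\ell(\phi)-T_\ell(\phi')\|\leq\frac{1}{2}\|\phi-\phi'\|$ on $B_\ell$, and Banach's fixed-point theorem produces the unique $\phi_{\ell,\eps}$ satisfying \eqref{stimaphi1}.

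The $C^1$-dependence on $(d_1,\dots,d_\ell)$ follows from a standard implicit function argument applied to the map $F_\ell(\phi_\ell,d_1,\dots,d_\ell)=\phi_\ell-T_\ell(\phi_\ell)$: since $\partial_{\phi_\ell}F_\ell=\operatorname{Id}-\partial_{\phi_\ell}T_\ell$ is invertible with uniformly bounded inverse by the contraction estimate, and the parameter derivatives of $\mathcal{E}_\ell$ and of $\mathcal{N}_\ell$ are $o(1)$ (by differentiating the expansions of $PU_j$ and $PZ_j$ from Lemmas \ref{prop-pro} and \ref{prop-proZ}), we obtain \eqref{stimaphi2}. For the pointwise bound \eqref{stimalinfty}, once $\phi_{\ell,\eps}\in H_0^1$ is known, I rewrite the $\ell$-th equation in strong form as $L_\gamma\phi_{\ell,\eps}=g_\ell$ in $B_{\rho\mu_\ell}(0)$, where $g_\ell$ collects the nonlinear and error terms. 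The bound $|g_\ell|\lesssim\mu_\ell^{-\Gamma-2}|x|^{-\beta_-+O(1)}$ and a direct comparison with the supersolution $|x|^{-\beta_-}/\mu_\ell^\Gamma$ for $L_\gamma$ (which solves $L_\gamma(|x|^{-\beta_-})=0$ away from $0$), combined with boundary control from the $H^1$ estimate via a Moser-type iteration weighted by $|x|^{\beta_-}$, yields \eqref{stimalinfty}.

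The main obstacle I anticipate is the third step: the contraction constant and the apriori bound must be simultaneously consistent with the order of magnitude of $R_\ell$ for each $\ell$, and in low dimensions ($N\leq 6$) the nonlinearity is superquadratic so one must be careful that $\|\phi_\ell\|^{(N+2)/(N-2)}\ll R_\ell$ holds on $B_\ell$. This forces the explicit choice of $R_\ell$ in \eqref{stimaphi1} to include the three distinct terms $(\mu_\ell/\mu_{\ell-1})^\Gamma$, $(\mu_\ell/\mu_{\ell-1})^{\Gamma/2+1}$ and the $\log$-term, each of which dominates in a different dimensional regime. Verifying the compatibility via the prescribed relations \eqref{muj2} between successive $\mu_j$'s is the core computational point.
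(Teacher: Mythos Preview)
Your overall scheme --- triangular induction on $\ell$, fixed point via $\mathcal L_\ell^{-1}$ on a ball of radius $\sim R_\ell$, and $C^1$-dependence by the implicit function theorem --- matches the paper. There is, however, a genuine structural gap in how you treat the nonlinear term and the pointwise estimate.

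\medskip

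\textbf{The nonlinear term has a zero-order part in $\phi_\ell$.} Your bound $\|\mathcal N_\ell\|\le C\bigl(\|\phi_\ell\|^{\min(2,\frac{N+2}{N-2})}+\|\phi_\ell\|\cdot o(1)\bigr)$ is not correct: set $\phi_\ell=0$ in \eqref{Nj} and observe that $\mathcal N_\ell(\phi_{1,\eps},\dots,\phi_{\ell-1,\eps},0)\not=0$, since it contains the cross-interaction of $PU_\ell$ with the \emph{previously constructed} remainders $\phi_{1,\eps},\dots,\phi_{\ell-1,\eps}$. In the paper this zero-order piece is estimated as $O(R_\ell)$ (not $o(1)$ times something), and it is precisely what forces $R_\ell$ to carry the three terms in \eqref{stimaphi1} rather than just $\|\mathcal E_\ell\|$. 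The control of this piece is obtained by an annular decomposition $\Omega=\bigcup_h\mathcal A_h$; on the innermost annuli $\mathcal A_{\ell-1}\cap B_{\rho\mu_{\ell-1}}(0)$ and $\mathcal A_\ell$ the $H^1$-bound on $\phi_{j,\eps}$, $j<\ell$, is \emph{not} enough: one needs the pointwise bound $|\phi_{j,\eps}(x)|\le C\mu_j^{-\Gamma}|x|^{-\beta_-}$ for $j<\ell$ to get terms like $|U_\ell^{4/(N-2)}\phi_{j,\eps}|_{\frac{2N}{N+2},\mathcal A_\ell}$ and $||\phi_{j,\eps}|^{4/(N-2)}PU_\ell|_{\frac{2N}{N+2},\mathcal A_{\ell-1}}$ down to size $R_\ell$.

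\medskip

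\textbf{Hence \eqref{stimalinfty} must be part of the induction hypothesis}, not a post-processing step. The paper's induction at level $\ell$ assumes \eqref{stimaphi1}, \eqref{stimaphi2} \emph{and} \eqref{stimalinfty} for $j=1,\dots,\ell-1$, uses all three to close the contraction, and then re-establishes \eqref{stimalinfty} at level $\ell$ before moving on. Your proposal decouples \eqref{stimalinfty} from the loop, which breaks the argument for $\ell\ge 2$.

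\medskip

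\textbf{On proving \eqref{stimalinfty} itself.} Your sketch (comparison with $|x|^{-\beta_-}$ plus a weighted Moser iteration) is in spirit correct but applied to the wrong object. The equation satisfied by $\phi_{\ell,\eps}$ alone carries the projection $\Pi_\ell^\perp$, i.e.\ a combination $\sum_j\lambda_{j,\eps}PZ_j$ on the right-hand side, and a priori you do not know these coefficients are small. The paper instead sums the $k$ equations to obtain a clean equation for $\sum_{j\le\ell}u_j$ (with $u_j=(-1)^jPU_j+\phi_{j,\eps}$), first proves $\lambda_{j,\eps}=o(1)$ by testing against $PZ_i$, then rescales by $\mu_\ell$ and applies a separate regularity lemma (Moser iteration up to exponents $<N/\beta_-$, followed by a barrier $\Phi=|x|^{-\beta_-}-|x|^{-\alpha}$ for $L_\gamma$ perturbed by a H\"older-small potential). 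Only after bounding the full sum does one extract the bound on $\phi_{\ell,\eps}$ by subtracting the already-controlled $u_j$, $j<\ell$.
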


\subsection{The reduced problem: proof of Theorem \ref{torri}}
Let us recall the expression for the energy functional $J_\eps:H_0^1(\Omega)\to \mathbb R$:
$$J_\epsilon(u)={1\over2}\int\limits_\Omega\(|\nabla u|^2-\gamma{u^2\over|x|^2} -\epsilon u^2\)dx-{N-2 \over 2N}\int\limits_\Omega |u|^{\frac{2N}{N-2}}dx,$$
whose critical points are solutions to the problem \eqref{1422bis}. Let us introduce the reduced energy as
$$J_\eps (\mu_1, \ldots, \mu_k )=J_\eps\(\sum_{j=1}^k (-1)^jPU_j  \).$$
Given $\Phi_{\eps}$ according to \eqref{resto} and Proposition \ref{phij}, the following result is the main core of the finite dimensional reduction of our problem.
\begin{proposition}\label{prob-rido}
Given \eqref{mujbis}-\eqref{muj}, we have that
\begin{equation}\label{21556}
J_\eps(\mu_1) =  A_1+ \left\{\begin{array}{ll} A_2 m \mu_1^2- A_3 \eps \mu_1^2 \log \frac{1}{\mu_1}&\hbox{if }\Gamma=1\\
A_2 m \mu_1^{2\Gamma}- A_3 \eps \mu_1^2 & \hbox{if }\Gamma>1 \end{array} \right. +\Upsilon_1(\mu_1)
\end{equation}
and when $\Gamma>2$
\begin{equation}\label{21557}
\begin{aligned}
J_\eps(\mu_1, \ldots, \mu_k ) =&  kA_1+ A_2 m \mu_1^{2\Gamma}-A_3 \eps \mu_1^2+\sum_{\ell=2}^k \[A_4 (\frac{\mu_\ell}{\mu_{\ell-1}})^\Gamma -A_3 \eps  \mu_\ell^2 \]\\
&+\sum_{\ell=1}^k \Upsilon_\ell (\mu_1,\ldots,\mu_\ell),
\end{aligned}
\end{equation}
where $ |\Upsilon_1|=o(\mu_1^{2\Gamma})$ and $|\Upsilon_\ell|=o\left( (\frac{\mu_\ell}{\mu_{\ell-1}})^\Gamma \right)$, $\ell=2,\ldots,k$, do hold as $\eps \to 0$ locally uniformly for $(d_1, \ldots, d_k ) $ in $(0, +\infty)^k$. Here $A_1,\ldots,A_4>0$ and $ m>0$ is the {\it Hardy interior mass} of $\Omega$ associated to $L_\gamma$.  
Moreover, critical points of 
$$\widetilde{J}_\eps(\mu_1,\ldots,\mu_k)=J_\eps\(\sum_{j=1}^k (-1)^jPU_j  +\Phi_\eps \)=J_\eps(\mu_1,\ldots,\mu_k)+\sum_{\ell=1}^k \widetilde{\Upsilon}_\ell (\mu_1,\ldots,\mu_\ell)$$
give rise to solutions $\displaystyle \sum_{j=1}^k (-1)^jPU_j  +\Phi_{ \eps}$ of \eqref{1422bis}, where
$\widetilde  {\Upsilon}_\ell$ satisfies the same estimate as $\Upsilon_\ell$.
\end{proposition}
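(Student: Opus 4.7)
The plan is to expand $J_\eps(W_\eps)$ with $W_\eps := \sum_{j=1}^k (-1)^j PU_j$ term by term, and then to pass to $\widetilde J_\eps$ via a Taylor expansion around $W_\eps$ that exploits the smallness of $\Phi_\eps$ from Proposition \ref{phij}. Using \eqref{PU} I rewrite the quadratic part as
\[
\|W_\eps\|^2 = \sum_{j,\ell=1}^k (-1)^{j+\ell} \int_\Omega U_j^{(N+2)/(N-2)}\, PU_\ell\,dx,
\]
and substitute the expansion of $PU_\ell$ from Lemma \ref{prop-pro}(ii). The diagonal terms $j=\ell$ produce a constant contribution (part of $A_1$) plus a Hardy-mass term: using $H_\gamma(x)\sim m/|x|^{\beta_-}$ near the origin from \eqref{1922} and the rescaling $x = \mu_j y$, the quantity $\alpha_N \mu_j^\Gamma \int_\Omega U_j^{(N+2)/(N-2)} H_\gamma\,dx$ is of exact order $m\mu_j^{2\Gamma}$, since the change of variables produces the factor $\mu_j^{N-(N+2)/2-\beta_-}= \mu_j^{\Gamma}$. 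For $j<\ell$, the off-diagonal interactions are dominated by consecutive pairs $(\ell-1,\ell)$, producing the leading interaction $(\mu_\ell/\mu_{\ell-1})^{\Gamma}$; non-consecutive pairs are strictly smaller in view of \eqref{muj2bis}--\eqref{muj2}, hence absorbed into $\Upsilon_\ell$.

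The nonlinear term $\int_\Omega |W_\eps|^{2N/(N-2)}$ is expanded by isolating self-interactions and bubble cross-terms (using the pointwise first-order expansion of $|a+b|^{2N/(N-2)}$ in the smaller summand) combined with the decay estimate in Lemma \ref{prop-pro}(iii). Together with $-\tfrac{1}{2}\|W_\eps\|^2$, the constants rearrange into $kA_1$, the Hardy-mass contributions consolidate the coefficient $A_2$, and the consecutive-bubble terms consolidate the coefficient $A_4$. The linear perturbation $-\tfrac{\eps}{2}\int PU_j^2$ is handled by a direct scaling computation: since $U(y)\sim |y|^{-\beta_+}$ at infinity with $\beta_+ = (N-2)/2+\Gamma$, the integral $\int U^2$ diverges logarithmically exactly when $\Gamma=1$, yielding $A_3\eps\mu_1^2\log(1/\mu_1)$, while for $\Gamma>1$ it converges and gives $A_3\eps\mu_\ell^2$. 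Off-diagonal cross-terms $\eps\int PU_j PU_\ell$ are of strictly lower order by \eqref{muj2bis}--\eqref{muj2}. The main technical obstacle here is the accurate bookkeeping: each $O$-term must be shown to fit into the target $\Upsilon_\ell = o(\mu_1^{2\Gamma})$ or $o((\mu_\ell/\mu_{\ell-1})^{\Gamma})$, which is achieved by systematically comparing scales through \eqref{muj2bis}--\eqref{muj2}.

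To compare $\widetilde J_\eps$ with $J_\eps(W_\eps)$, I Taylor expand
\[
\widetilde J_\eps = J_\eps(W_\eps) + \langle J_\eps'(W_\eps), \Phi_\eps\rangle + \tfrac{1}{2} J_\eps''(W_\eps)[\Phi_\eps,\Phi_\eps] + O(\|\Phi_\eps\|^{2N/(N-2)}).
\]
The linear term equals, up to contributions that vanish by the orthogonality $\phi_\ell \in \mathcal K_\ell^\bot$, a combination of $\langle \mathcal E_\ell, \phi_\ell\rangle$-type pairings; by Cauchy--Schwarz together with Lemma \ref{errorej} and the size bound \eqref{stimaphi1} of Proposition \ref{phij}, this is $o$ of the leading terms in \eqref{21556}--\eqref{21557}, and the quadratic and higher-order pieces are similarly absorbed into the $\widetilde\Upsilon_\ell$'s. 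Finally, the claim that critical points of $\widetilde J_\eps$ yield solutions of \eqref{1422bis} is the standard Lyapunov--Schmidt conclusion: differentiating $\widetilde J_\eps$ in each $\mu_j$ and using the reduced system \eqref{sistema} along with the differentiability bound \eqref{stimaphi2}, the Lagrange multipliers of \eqref{bif} satisfy a linear system whose matrix is a small perturbation of a diagonal one (since $\partial_{\mu_j}PU_j$ differs from a multiple of $PZ_j$ only by lower-order terms), and its invertibility for small $\eps$ forces all multipliers to vanish, yielding a true solution.
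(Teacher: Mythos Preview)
Your overall strategy matches the paper's, but there is one structural point you gloss over and one incorrect claim.

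The proposition asserts that each $\Upsilon_\ell$ and $\widetilde\Upsilon_\ell$ depends only on $\mu_1,\ldots,\mu_\ell$, not on all $k$ parameters; this layered dependence is what makes the subsequent minimisation in the proof of Theorem~\ref{torri} work. Your expansion of $J_\eps(W_\eps)$ lumps all remainders together without explaining how to separate them into pieces depending only on $\mu_1,\ldots,\mu_\ell$. The paper achieves this by a telescoping decomposition $J_\eps(W_\eps)=\sum_{\ell=1}^k a_\ell$ with each $a_\ell$ built explicitly from $PU_1,\ldots,PU_\ell$ only, and an analogous explicit telescoping definition for each $\widetilde\Upsilon_\ell$. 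The cross-term integrals are then estimated annulus by annulus on the $\mathcal A_h$ of \eqref{anelli}, which is where most of the actual work lies and which your sketch does not mention.

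Your claim that in the linear piece $\langle J_\eps'(W_\eps),\Phi_\eps\rangle$ ``contributions vanish by the orthogonality $\phi_\ell\in\mathcal K_\ell^\bot$'' is not right: that orthogonality is to $PZ_1,\ldots,PZ_\ell$, none of which appear in $J_\eps'(W_\eps)$. The paper does not use a Taylor expansion here; instead it tests equation \eqref{18222} for $u_j$ against $\phi_{i,\eps}$ with $i\ge j$, which converts the quadratic-form pairings into nonlinear ones and produces the needed cancellations. The smallness you want does hold, but it comes from the system \eqref{sistema} and the pointwise control \eqref{stimalinfty} (used to estimate $\phi_{j,\eps}$ on the innermost annulus $\mathcal A_\ell$), not from the $\mathcal K_\ell^\bot$-orthogonality.
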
 

\begin{proof} [\bf {Proof of Theorem \ref{torri}}]
By \eqref{mujbis}-\eqref{muj} and Proposition \ref{prob-rido} it is sufficient to find a critical point of 
$$F_\eps(d_1)= e^{-\frac{2d_1}{\eps}} \(A_2 m -A_3 d_1+o_\ell(1)\) $$
when $\Gamma=1$ and
$$F_\eps(d_1,\dots,d_k)=\sum\limits_{\ell=1}^k\eps^{2\sigma_\ell+1} \(G_\ell(d_1,\dots,d_\ell)+o_\ell(1)\)$$
when $\Gamma>1$, where
$$G_1(d_1)=A_2 m d_1^{2\Gamma}-A_3 d_1^2, \qquad G_\ell(d_1,\dots,d_\ell)=A_4 (\frac{d_\ell}{d_{\ell-1}} )^\Gamma -A_3 d_\ell^2, \  \ell=2,\dots,k.$$
Here $o_\ell(1)$ only depends on $d_1,\dots,d_\ell$ and $o_\ell(1)\to0$ as $\e\to0$  locally uniformly for $(d_1, \dots, d_\ell) $ in $(0, +\infty)^\ell.$ For $k=1$ it is easily found an interval 
$$I=\left\{ \begin{array}{ll}  (\frac{A_2}{A_3}m+\frac{\eps}{4},\frac{A_2}{A_3}m+\eps)& \hbox{if }\Gamma=1\\
\(\frac{1}{2}(\frac{A_3}{A_2m \Gamma})^{\frac{1}{2(\Gamma-1)}},2(\frac{A_3}{A_2m \Gamma})^{\frac{1}{2(\Gamma-1)}}\)& \hbox{if }\Gamma>1 \end{array} \right. \subset (0,+\infty)$$ 
so that
$$\inf_I F_\eps< \inf_{\partial I}F_\eps$$
for $\eps$ small, which guarantees the existence of a minimum point $d_\eps \in I$ of $F_\eps$. For $k\ge2$ it is still possible to show that $F_\eps$ has a minimum point but the proof is more involved. Since it can be carried out as in \cite{MPV}, we omit the details.
\end{proof}

\section{Appendix} \label{sec4}
\noindent All the technical proofs can be carried out as in \cite{MPV}. Since they are quite involved, we rewrite some of them here by re-adapting the arguments to the present situation.

\subsection{The rate of the error: proof of Lemma \ref{errorej} }
By the property of $\i^*,$ we get
\begin{equation}\label{e1}
\begin{aligned}
\|\mathcal E_1\|=O\(|(U_1)^{\frac{N+2}{N-2}}-(PU_1)^{\frac{N+2}{N-2}}|_{2N\over N+2}\)+O\(\eps|PU_1|_{2N\over N+2}\).
\end{aligned}
\end{equation}
By Lemma \ref{prop-pro} and scaling $x=\mu_1 y$ we have that
\begin{equation} \label{114}
\begin{aligned}
|PU_1|_{2N\over N+2} \le |U_1|_{2N\over N+2}=\mu^2_1|U|_{\frac{2N}{N+2}, \frac{\Omega}{\mu_1}}=\left\{ \begin{array}{ll} O\(\mu_1^{\Gamma} \) &\hbox{if } 1\leq \Gamma <2\\
O\(\mu_1^2 \log^{\frac{N+2}{2N}} \frac{1}{\mu_1}\) &\hbox{if } \Gamma=2\\
O\(\mu_1^2\) &\hbox{if } \Gamma>2 \end{array} \right.
\end{aligned}\end{equation}
in view of $\frac{2 \bm}{N+2}<1$ and $\frac{2 \bp}{N+2} =\frac{N-2+2\Gamma}{N+2}.$ Since $|a+b|^{\frac{N+2}{N-2}}-|a|^{\frac{N+2}{N-2}}=O( |a|^{\frac{4}{N-2}}|b|+|b|^{\frac{N+2}{N-2}})$ for all $a, b\in\mathbb R$, we deduce that
\begin{equation} 
\Big|(U_1)^{\frac{N+2}{N-2}}-(PU_1)^{\frac{N+2}{N-2}}\Big|_{2N\over N+2}=O \( \Big| U_1^{\frac{4}{N-2}}(PU_1-U_1)\Big|_{2N\over N+2}+\Big|PU_1-U_1 \Big|^{\frac{N+2}{N-2}}_{2N \over N-2}\). \label{113}
\end{equation}
By Lemma \ref{prop-pro} and scaling $x=\mu_1 y$ we have that 
\begin{equation}
\begin{aligned}\Big|U_1^{\frac{4}{N-2}}(PU_1-U_1)\Big|_{2N\over N+2}&\le c
\mu_1^\Gamma \Big| \frac{U_1^{\frac{4}{N-2}}}{|x|^{\beta_-}}\Big|_{2N\over N+2}
=   c (\mu_1)^{2\Gamma}
\Big| {U^{4\over N-2} \over |y|^\bm} \Big|_{{2N\over N+2}, \frac{\Omega}{\mu_1}}  \\
&=\left\{\begin{array}{ll} O\(\mu_1^{2\Gamma}\) & \hbox{if } 3\leq N \leq 5\\
O\( \mu_1^{2\Gamma} \log^{\frac{2}{3}} \frac{1}{\mu_1}\) & \hbox{if } N=6\\
O\(\mu_1^{{N+2\over N-2}\Gamma}\) & \hbox{if } N\geq 7 \end{array}\right.
\end{aligned} \label{111}
\end{equation}
and
\begin{equation}\Big|PU_1-U_1 \Big|^{N+2\over N-2}_{2N\over N-2}=O\( | \frac{\mu_1^{\Gamma}}{|x|^{\bm}}|^{N+2\over N-2}_{2N\over N-2}\)
=O\(\mu_1^{{N+2\over N-2}\Gamma}\),
\label{112}
\end{equation}
in view of $\frac{2 \beta_-}{N-2} <1$ and 
\begin{equation} \label{13311}
\frac{2N}{N+2}(\beta_-+\frac{4\beta_+}{N-2})=N-\frac{2N(N-6)}{N^2-4}\Gamma.
\end{equation}
Inserting \eqref{111}-\eqref{112} into \eqref{113}, by \eqref{e1}-\eqref{114} we deduce the validity of \eqref{115}.

\medskip \noindent Let us now consider the case $k \geq 2$ and assume $\Gamma>2$. For $\ell\geq 2$ we have that
\begin{equation*}
\begin{aligned} \|\mathcal E_\ell\| 
=& \underbrace{O(| |\sum_{j=1}^\ell  (-1)^j PU_j|^{\frac{4}{N-2}}\sum_{j=1}^\ell  (-1)^j PU_j- |\sum_{j=1}^{\ell-1} (-1)^j PU_j|^{\frac{4}{N-2}}\sum_{j=1}^{\ell-1} (-1)^j PU_j -(-1)^l ( PU_\ell)^{\frac{N+2}{N-2}}|_{\frac{2N}{N+2}})}_{(I)}\\
& +\underbrace{O\( \left|(U_\ell)^{\frac{N+2}{N-2}}-( PU_\ell )^{\frac{N+2}{N-2}}\right|_{\frac{2N}{N+2}}+ \eps \left|  PU_\ell\right|_{\frac{2N}{N+2}}\)}_{(II)}. 
\end{aligned} \end{equation*}
$(II)$ is estimated as in \eqref{115} with $\mu_1$ replaced by $\mu_l$. As for $(I)$, let us introduce disjoint annuli $\mathcal A_h$ as  
\begin{equation}\label{anelli}
\mathcal A_0=\Omega\setminus B_r(0),\quad \mathcal A_h = B_{\sqrt{\mu_{h-1}\mu_h}}(0)\setminus B_{\sqrt{\mu_h \mu_{h+1}}}(0),\   h=1,\ldots, \ell, 
\end{equation}
where $\mu_0$ satisfies $\mu_0 \mu_1=r^2$ with $r=\frac{1}{2}\hbox{dist}(0,\partial \Omega)$ and $\mu_{\ell+1}=0$. Moreover define $\mu_{-1}$ so that $\mu_{-1} \mu_0=(\hbox{diam}\ \Omega)^2$, in order to get $\mathcal A_0 \subset B_{\sqrt{\mu_{-1}\mu_0}}(0)\setminus B_{\sqrt{\mu_0 \mu_1}}(0)$. Since 
\begin{equation} \label{18355}
|a+b|^{\frac{4}{N-2}}(a+b)-|a|^{\frac{4}{N-2}}a-\frac{N+2}{N-2} |a|^{\frac{4}{N-2}} b= O(|b|^{\frac{N+2}{N-2}})+\underbrace{O(|a|^{\frac{6-N}{N-2}}b^2)}_{\hbox{if } 3\leq N\leq 5} \end{equation}
for all $a, b\in\mathbb R$, we have that 
\begin{eqnarray} \nonumber
&&  | |\sum_{j=1}^\ell (-1)^j PU_j|^{\frac{4}{N-2}}\sum_{j=1}^\ell (-1)^j PU_j - |\sum_{j=1}^{\ell-1} (-1)^j PU_j |^{\frac{4}{N-2}}\sum_{j=1}^{\ell-1} (-1)^j PU_j-(-1)^l ( PU_\ell )^{\frac{N+2}{N-2}} |_ {\frac{2N}{N+2}, \mathcal A_h} \nonumber \\
&&= \left\{ \begin{array}{ll}
O \(\displaystyle \sum_{j=1}^{\ell-1}| (PU_j)^{\frac{4}{N-2}} PU_\ell |_{\frac{2N}{N+2}, \mathcal A_h}+| PU_\ell |^{\frac{N+2}{N-2}}_{\frac{2N}{N-2}, \mathcal A_h}\) & \hbox{if }h=0,\ldots,l-1\\
O \(\displaystyle \sum_{j=1}^{\ell-1} |  (PU_\ell )^{\frac{4}{N-2}} PU_j |_{\frac{2N}{N+2}, \mathcal A_\ell}+\sum_{j=1}^{\ell-1} |PU_j |^{\frac{N+2}{N-2}}_{\frac{2N}{N-2}, \mathcal A_\ell}\)& \hbox{if }h=l.  \end{array}\right. \label{15066}
\end{eqnarray}
Hereafter we will repeatedly use that $\mu_1>>\ldots>>\mu_k$. Since $\frac{2 \beta_-}{N-2}<1<\frac{2 \beta_+}{N-2}$, by Lemma \ref{prop-pro} and scaling $x=\mu_i y$ we have that 
\begin{equation}\label{ok1}\begin{aligned}
\left| PU_ j \right| _{\frac{2N}{N-2}, \mathcal A_h}&\le \left| U_j \right| _{\frac{2N}{N-2}, \mathcal A_h}
=\left| U\right| _{\frac{2N}{N-2}, \frac{\mathcal A_h}{\mu_j}}
=\left\{ \begin{array}{ll} 
O ((\frac{\mu_\ell }{\sqrt{\mu_h \mu_{h+1}}})^{\Gamma})  &\hbox{if }j=\ell \\
O ((\frac{\sqrt{\mu_{l-1} \mu_l }}{\mu_j})^{\Gamma})  &\hbox{if }h=\ell \end{array} \right.
=O (( {\mu_\ell\over \mu_{\ell-1}})^{\frac{\Gamma}{2}})
\end{aligned}\end{equation}
for any $j=1, \ldots, \ell$ and $h=0, \ldots, \ell$ with $\max\{j,h\}=\ell$, $j \not=h$. Since $|x| >>\mu_l$ in $\mathcal A_h$, for any $j=1, \ldots, \ell-1$ and $h=0, \ldots, \ell-1$ by Lemma \ref{prop-pro} we have 
\begin{equation}\label{ok2}\begin{aligned}
\left| (PU_j)^{\frac{4}{N-2}} PU_\ell \right|_{\frac{2N}{N+2}, \mathcal A_h} & \le
\left| U_j^{\frac{4}{N-2}} U_\ell \right|_{\frac{2N}{N+2}, \mathcal A_h}
\leq c \mu_l^{\Gamma} \left| \frac{U_j^{\frac{4}{N-2}}}{|x|^{\beta_+} } \right|_{\frac{2N}{N+2}, \mathcal A_h}\\
&= c (\frac{\mu_l}{\mu_j})^{\Gamma} \left| \frac{U^{\frac{4}{N-2}}}{|y|^{\beta_+} } \right|_{\frac{2N}{N+2}, \frac{\mathcal A_h}{\mu_j}}=O\( (\frac{\mu_l}{\mu_{l-1}})^{\Gamma}\)
\end{aligned}
\end{equation}
when $3\leq N\leq 5$ and
\begin{equation}\label{ok22}
\begin{aligned} & \left| (PU_j)^{\frac{4}{N-2}} PU_\ell \right|_{\frac{2N}{N+2}, \mathcal A_h} \le
\left| U_j^{\frac{4}{N-2}} U_\ell \right|_{\frac{2N}{N+2}, \mathcal A_h}
\leq c \mu_j^{-\frac{4\Gamma}{N-2}} \left| \frac{U_l}{|x|^{\frac{4\beta_-}{N-2}} } \right|_{\frac{2N}{N+2}, \mathcal A_h}\\
&= c (\frac{\mu_l}{\mu_j})^{\frac{4\Gamma}{N-2}} \left| \frac{U}{|y|^{\frac{4\beta_-}{N-2}} } \right|_{\frac{2N}{N+2}, \frac{\mathcal A_h}{\mu_l}} \le c (\frac{\mu_l}{\mu_j})^{\frac{4\Gamma}{N-2}} \left\{ \begin{array}{ll} 
\log^{\frac{2}{3}} \frac{\sqrt{\mu_{h-1} \mu_h}}{\mu_l} & {if }N=6\\
(\frac{\mu_l}{\sqrt{\mu_h \mu_{h+1}}})^{\frac{N-6}{N-2}\Gamma} & \hbox{if }N\geq 7 \end{array} \right.\\
&=O\( (\frac{\mu_l}{\mu_{l-1}})^{\frac{N+2}{N-2}\frac{\Gamma}{2}} \log^{\frac{2}{3}} \frac{1}{\mu_l} \)
\end{aligned}\end{equation}
when $N\geq 6$, in view of $\frac{2 \beta_-}{N-2}<1<\frac{2 \beta_+}{N-2}$ and 
$$\frac{2N}{N+2}(\frac{4\beta_-}{N-2}+\beta_+)=N+\frac{2N(N-6)}{N^2-4}  \Gamma.$$
Similarly, for $j=1,\ldots,l-1$ we have that
\begin{equation}\label{ok222}\begin{aligned}
\left| (PU_l)^{\frac{4}{N-2}} PU_j \right|_{\frac{2N}{N+2}, \mathcal A_l} & \le
\left| U_l^{\frac{4}{N-2}} U_j \right|_{\frac{2N}{N+2}, \mathcal A_l}
\leq c \mu_j^{-\Gamma} \left| \frac{U_l^{\frac{4}{N-2}}}{|x|^{\beta_-} } \right|_{\frac{2N}{N+2}, \mathcal A_l}\\
&= c (\frac{\mu_l}{\mu_j})^{\Gamma} \left| \frac{U^{\frac{4}{N-2}}}{|y|^{\beta_-} } \right|_{\frac{2N}{N+2}, \frac{\mathcal A_l}{\mu_l}}=O\( (\frac{\mu_l}{\mu_{l-1}})^{\Gamma}\)
\end{aligned}
\end{equation}
when $3\leq N\leq 5$ and
\begin{equation}\label{ok2222}
\begin{aligned} & \left| (PU_l)^{\frac{4}{N-2}} PU_j \right|_{\frac{2N}{N+2}, \mathcal A_l} \le
\left| U_l^{\frac{4}{N-2}} U_j \right|_{\frac{2N}{N+2}, \mathcal A_l}
\leq c \mu_l^{\frac{4\Gamma}{N-2}} \left| \frac{U_j}{|x|^{\frac{4\beta_+}{N-2}} } \right|_{\frac{2N}{N+2}, \mathcal A_l}\\
&= c (\frac{\mu_l}{\mu_j})^{\frac{4\Gamma}{N-2}} \left| \frac{U}{|y|^{\frac{4\beta_+}{N-2}} } \right|_{\frac{2N}{N+2}, \frac{\mathcal A_l}{\mu_j}} \le c (\frac{\mu_l}{\mu_i})^{\frac{4\Gamma}{N-2}} \left\{ \begin{array}{ll} 
\log^{\frac{2}{3}} \frac{\mu_j}{\sqrt{\mu_{l-1} \mu_l}} & {if }N=6\\
(\frac{\sqrt{\mu_{l-1} \mu_l }}{\mu_j})^{\frac{N-6}{N-2}\Gamma} & \hbox{if }N\geq 7 \end{array} \right.\\
&=O\( (\frac{\mu_l}{\mu_{l-1}})^{\frac{N+2}{N-2}\frac{\Gamma}{2}} \log^{\frac{2}{3}} \frac{1}{\mu_l} \)
\end{aligned}\end{equation}
when $N \geq 6$ in view of $\frac{2 \beta_-}{N-2}<1$ and \eqref{13311}. By inserting \eqref{ok1}-\eqref{ok2222} into \eqref{15066} we deduce an estimate of $(I)$ which, along with the estimate on $(II)$ in terms of $\mu_\ell$, leads to the validity of \eqref{116}. 

\subsection{The reduced energy: proof of \eqref{21556}-\eqref{21557}} To get an expansion of $J_\e (\mu_1,\ldots,\mu_k)$, let us first write that
\begin{eqnarray*}
&& J_\e (\sum_{\ell=1}^k (-1)^\ell PU_\ell) = \sum_{\ell=1}^k J_\e( PU_\ell)+\sum_{i < \ell} (-1)^{i+\ell}\int_\Omega [U_\ell^{\frac{N+2}{N-2}}- \e PU_\ell -(PU_\ell)^{\frac{N+2}{N-2}}] PU_i \, dx \\
& &-\frac{N-2}{2N} \int_\Omega [|\sum_{\ell=1}^k  (-1)^\ell PU_\ell|^{\frac{2N}{N-2}}-\sum_{\ell=1}^k (PU_\ell)^{\frac{2N}{N-2}}-\frac{2N}{N-2} \sum_{i<\ell}(-1)^{i+\ell} (PU_\ell)^{\frac{N+2}{N-2}}  PU_i ]\, dx 
\end{eqnarray*}
in view of $PU_\ell=\i^*\(U_\ell^{\frac{N+2}{N-2}}\)$. Introducing the quantities 
\begin{equation*}\begin{aligned} a_\ell=&  J_\e( PU_\ell)+\sum_{ i=1 }^{\ell-1} (-1)^{i+\ell}  \int_\Omega [U_\ell^{\frac{N+2}{N-2}}-\e PU_\ell -(PU_\ell)^{\frac{N+2}{N-2}}] PU_i \, dx \\ &
- \frac{N-2}{2N}  \int_{\Omega}  [ |\sum_{i=1}^\ell   (-1)^i PU_i |^{\frac{2N}{N-2}}-|\sum_{i=1}^{\ell-1}  (-1)^i PU_i|^{\frac{2N}{N-2}}-(PU_\ell)^{\frac{2N}{N-2}}-\frac{2N}{N-2} \sum_{i=1}^{\ell-1}(-1)^{i+\ell} ( PU_\ell)^{\frac{N+2}{N-2}} PU_i ] \, dx \end{aligned}
\end{equation*}
for any $\ell=1,\dots,k$, let us notice that each $a_\ell$ only depends on $d_1,\dots,d_\ell$ and the following decomposition does hold:
\begin{equation} \label{05377}
J_\e (\sum_{\ell=1}^k (-1)^\ell PU_\ell) =\sum_{\ell=1}^k a_\ell.
\end{equation}
We claim that
\begin{equation}\label{a1} a_1=A_1+A_2 m \mu_1^{2\Gamma}(1+o(1))-A_3 \eps (1+o(1))\left\{\begin{array}{ll}
\mu_1^2 \log \frac{1}{\mu_1}&\hbox{if }\Gamma=1\\
\mu_1^2& \hbox{if }\Gamma>1 \end{array} \right.
\end{equation}
and
\begin{equation}
\label{aelle}
a_\ell=A_1+A_4\(\frac{\mu_\ell}{\mu_{\ell-1}}\)^{\Gamma}(1+o(1)) -A_3  \eps \mu_\ell^2(1+o(1)),\ \ell=2,\dots,k,
\end{equation}
where $m>0$ is the {\it Hardy interior mass} of $\Omega$ associated to $L_\gamma$ and $A_1,\ldots,A_4>0$. Inserting \eqref{a1}-\eqref{aelle} into \eqref{05377}, we deduce the validity of \eqref{21556}-\eqref{21557}.

\medskip \noindent To compute $J_\e( PU_\ell)$, let us first write
\begin{equation}\label{11598}
\begin{aligned}
J_\e(PU_\ell)&= \frac1{N} \int_\Omega U_\ell^{\frac{2N}{N-2}}dx-\frac12\int_\Omega U_\ell^{\frac{N+2}{N-2}} (PU_\ell-U_\ell )dx-\frac{\eps}{2} \int_\Omega PU_\ell^2dx\\
&-\frac{N-2}{2N} \int_\Omega [(PU_\ell)^{\frac{2N}{N-2}}-U_\ell^{\frac{2N}{N-2}}-\frac{2N}{N-2} U_\ell^{\frac{N+2}{N-2}} (PU_\ell-U_\ell) ]dx
\end{aligned}\end{equation}
in view of $PU_\ell=\i^*\(U_\ell^{\frac{N+2}{N-2}}\)$. We have that
\begin{equation} \label{11599}
 \int_\Omega U_\ell^{\frac{2N}{N-2}} dx= \int_{\mathbb R^N} U^{\frac{2N}{N-2}} dy+O(\mu_\ell^{{2N\over N-2}\Gamma}), \end{equation}
and by Lemma \ref{prop-pro} and \eqref{1922} we deduce that
\begin{equation} \label{11600}
\begin{aligned}
\int_\Omega U_\ell^{\frac{N+2}{N-2}} (PU_\ell-U_\ell )dx&=-\alpha_N  \mu_\ell^\Gamma \int_\Omega 
U_\ell^{\frac{N+2}{N-2}}[H_\gamma(x)+O(\frac{\mu_\ell^{\frac{4\Gamma}{N-2}}}{|x|^{\beta_-}})] \, dx \\
&=-\alpha_N m \mu_\ell^{2\Gamma} \int_{\mathbb R^N} {U^{\frac{N+2}{N-2}}\over |y|^\bm} dy \ (1+o(1))
\end{aligned}
\end{equation}
and 
\begin{equation} \label{11601}
\begin{aligned}
\int_\Omega PU_\ell^2dx &=
\int_\Omega U_\ell^2dx+O(\int_\Omega U_\ell \frac{\mu_\ell^\Gamma}{|x|^{\beta_-}}\  dx)=
\mu_\ell^2 \int_{\frac{\Omega}{\mu_\ell}} U^2\ dy+O(\mu_\ell^{2\Gamma} \int_\Omega \frac{dx}{|x|^{N-2}} )\\
&=
\left\{ \begin{array}{ll}
\mu_\ell^2 \log \frac{1}{\mu_\ell}  [\alpha_N^2 \omega_{N-1}+o(1)]&\hbox{if }\Gamma=1\\
\mu_\ell^2 [\int_{\mathbb{R}^N}U^2 \ dy +o(1)]& \hbox{if }\Gamma>1
\end{array} \right. \end{aligned}
\end{equation}
in view of $\frac{2N \beta_-}{N-2}<N<\beta_-+\frac{N+2}{N-2}\beta_+$ and $2\beta_\pm=N-2 \pm 2\Gamma$. Since 
\begin{equation} \label{18377}
|a+b|^{\frac{2N}{N-2}}-|a|^{\frac{2N}{N-2}}-\frac{2N}{N-2}|a|^{\frac{4}{N-2}}ab=O( |a|^{\frac{4}{N-2}}b^2+|b|^{\frac{2N}{N-2}}) \end{equation}
for all $a,b \in \mathbb R$, by Lemma \ref{prop-pro} we finally deduce
\begin{equation} \label{11602}
\begin{aligned}&\int_\Omega [(PU_\ell)^{\frac{2N}{N-2}}-U_\ell^{\frac{2N}{N-2}}-\frac{2N}{N-2}U_\ell^{\frac{N+2}{N-2}} (PU_\ell-U_\ell) ]dx\\
&=O\(\int_\Omega|PU_\ell-U_\ell |^{\frac{2N}{N-2}}dx+ \int_\Omega U_\ell^{\frac{4}{N-2}}(PU_\ell-U_\ell)^{2}dx\)=O( \mu_\ell^{\frac{2N}{N-2} \Gamma}+\mu_\ell^{2\Gamma} \int_\Omega \frac{U_\ell^{\frac{4}{N-2}}}{|x|^{2\beta_-}})\\
&=\left\{\begin{array}{ll} 
O( \mu_\ell^{\frac{2N}{N-2} \Gamma} +\mu_\ell^{4\Gamma} \int_{B_{\frac{R}{\mu_\ell}}(0)} \frac{U^{\frac{4}{N-2}}}{|y|^{2\beta_-}} \ dy) &\hbox{if }3\leq N\leq 4\\
O(\mu_\ell^{\frac{2N}{N-2} \Gamma}+\mu_\ell^{\frac{2N}{N-2} \Gamma} \int_\Omega \frac{dx}{ |x|^{\frac{4 \beta_+}{N-2}+2\beta_-}}  )&\hbox{if }N\ge 5
\end{array} \right.=o(\mu_\ell^{2\Gamma}) 
 \end{aligned}
\end{equation}
in view of $\frac{2\beta_-}{N-2}<1$ and $\frac{4\beta_+}{N-2}+2\beta_-=N-2 \frac{N-4}{N-2} \Gamma$. Inserting \eqref{11599}-\eqref{11601} and \eqref{11602} into \eqref{11598} we get the validity of \eqref{a1} for $a_1=J_\eps(PU_1)$. 

\medskip \noindent Hereafter let us consider the case $k\geq 2$ with $\Gamma>2$. As for $\ell=1$ in \eqref{a1}, the following expansion does hold  
\begin{equation}\label{a1bis} 
J_\e (PU_\ell) =A_1+A_2 m \mu_\ell^{2\Gamma}(1+o(1))-A_3 \eps \mu_\ell^2(1+o(1)),\quad \ell=1,\ldots,k.
\end{equation}
Let $\ell \geq 2$. Since 
$$U_\ell^{\frac{4}{N-2}}=O\((\frac{\mu_\ell^\Gamma}{|x|^{\beta_+}})^{\frac{5}{2(N-2)}} (\frac{1}{\mu_\ell^\Gamma |x|^{\beta_-}})^{\frac{3}{2(N-2)}}\)=O(\frac{\mu_\ell^{\frac{\Gamma}{N-2}}}{|x|^{\frac{5}{2(N-2)} \beta_+ +\frac{3}{2(N-2)}\beta_- }}),$$ 
by Lemma \ref{prop-pro} and \eqref{18355} we have that for any $i=1,\dots,\ell-1$ 
\begin{equation*}
\begin{aligned}
&\int_\Omega [U_\ell^{\frac{N+2}{N-2}}-\e PU_\ell -(PU_\ell)^{\frac{N+2}{N-2}}] PU_i \, dx\\
&= O\(\mu_\ell^{\frac{N+2}{N-2}\Gamma} \int_\Omega \frac{U_i}{|x|^{\frac{N+2}{N-2} \beta_-}}\ dx+ \mu_\ell^\Gamma \int_\Omega \frac{ U_\ell^{\frac{4}{N-2}} U_i}{|x|^{\beta_-}} \, dx
+\e \int_\Omega U_i U_\ell \, dx \)\\
&=O\(\int_\Omega \frac{\mu_\ell^{\frac{N+2}{N-2}\Gamma} \mu_i^\Gamma }{|x|^{\frac{N+2}{N-2} \beta_-+\beta_+}} \ dx
+(\frac{\mu_\ell}{\mu_i})^\Gamma \mu_\ell^{\frac{\Gamma}{N-2}} \int_\Omega \frac{dx}{|x|^{\frac{4N-5}{2(N-2)} \beta_-+
\frac{5}{2(N-2)}\beta_+}}
+\e (\frac{\mu_\ell}{\mu_i})^\Gamma \int_\Omega \frac{dx}{|x|^{\beta_-+\beta_+}}  \)
\end{aligned}
\end{equation*}
and then
\begin{equation} \label{12422}
\int_\Omega [U_\ell^{\frac{N+2}{N-2}}-\e PU_\ell -(PU_\ell)^{\frac{N+2}{N-2}}] PU_i \, dx=o\((\frac{\mu_\ell}{\mu_{\ell-1}})^\Gamma  \)
\end{equation}
in view of \eqref{mujbis}-\eqref{muj} and
\begin{equation} \label{diesis18}
\frac{N+2}{N-2}\beta_-+\beta_+<N,\quad \frac{4N-5}{2(N-2)} \beta_-+\frac{5}{2(N-2)}\beta_+<N.\end{equation} 
In order to expand the last term in $a_\ell$, $\ell=2,\ldots,k$, let us split $\Omega$ as $\displaystyle \Omega= \bigcup_{h=0}^\ell \mathcal A_h$ (see \eqref{anelli}), and for $h=0,\ldots,\ell$ set 
\begin{equation*}  
\begin{aligned}
I_h=\int_{\mathcal A_h} [ |\sum_{i=1}^\ell   (-1)^i PU_i|^{\frac{2N}{N-2}}-|\sum_{i=1}^{\ell-1}  (-1)^i PU_i|^{\frac{2N}{N-2}}-(PU_\ell )^{\frac{2N}{N-2}}-\frac{2N}{N-2} \sum_{i=1}^{\ell-1}(-1)^{i+\ell} ( PU_\ell)^{\frac{N+2}{N-2}} PU_i ] \, dx.
\end{aligned}
\end{equation*}
By \eqref{ok1}, \eqref{ok222}-\eqref{ok2222} and \eqref{18377} we deduce that
\begin{equation} \label{22511}
\begin{aligned}
I_\ell & =O \( \sum_{i=1}^{\ell-1}  \int_{\mathcal A_\ell}  [(PU_i)^{\frac{2N}{N-2}}+(PU_i)^2  PU_\ell^{\frac{4}{N-2}}] \, dx \) \\
&=O\(\sum_{i=1}^{\ell-1} |PU_i|_{{2N\over N-2},\mathcal A_\ell}^{\frac{2N}{N-2}}+\sum_{i=1}^{\ell-1} |PU_i|_{{2N\over N-2},\mathcal A_\ell} | PU_\ell ^{\frac{4}{N-2}} PU_i  |_{{2N\over N+2},\mathcal A_\ell} \)
=o\((\frac{\mu_\ell}{\mu_{\ell-1}})^{\Gamma}\).
\end{aligned}
\end{equation}
For $h=0, \ldots, \ell-1$ by \eqref{18355} and \eqref{18377} we get that
\begin{equation*}
\begin{aligned}
&I_h =\frac{2N}{N-2} \int_{\mathcal A_h} [|\sum_{i=1}^{\ell-1} (-1)^i PU_i|^{\frac{4}{N-2}}\sum_{i=1}^{\ell-1} (-1)^i PU_i](-1)^\ell PU_\ell \ dx\\
&+O(\int_{\mathcal A_h}\sum_{i=1}^{\ell -1}[ (PU_i)^{\frac{4}{N-2}} (PU_l)^2+(PU_l)^{\frac{N+2}{N-2}}PU_i] \ dx +\int_{\mathcal A_h} (PU_l)^{\frac{2N}{N-2}}\ dx)\\
&=-\frac{2N}{N-2} \int_{\mathcal A_h}  (PU_{\ell-1})^{\frac{N+2}{N-2}} PU_\ell \ dx+O\(
 \int_{\mathcal A_h}\sum_{i=1}^{\ell-2}[(PU_{\ell-1})^{\frac{4}{N-2}} PU_i PU_\ell+(PU_i)^{\frac{N+2}{N-2}} PU_\ell ] \ dx \)\\
&+O(\int_{\mathcal A_h}\sum_{i=1}^{\ell -1}[ (PU_i)^{\frac{4}{N-2}} (PU_l)^2+(PU_l)^{\frac{N+2}{N-2}}PU_i] \ dx +\int_{\mathcal A_h} (PU_l)^{\frac{2N}{N-2}}\ dx).
\end{aligned}
\end{equation*}
Since $\beta_-+\frac{N+2}{N-2}\beta_+=N+\frac{4 \Gamma}{N-2}>N$, by Lemma \ref{prop-pro} and \eqref{diesis18} we deduce that 
\begin{eqnarray*}
&&\int_{\mathcal A_h}(PU_{\ell-1})^{\frac{4}{N-2}} PU_i PU_\ell=
O((\frac{\mu_\ell}{\mu_i})^\Gamma \int_{\mathcal A_h} \frac{U_{\ell-1}^{\frac{4}{N-2}}}{|x|^{N-2}} \ dx)
=O((\frac{\mu_\ell}{\mu_i})^\Gamma \int_{\frac{\mathcal A_h}{\mu_{\ell-1}}} \frac{U^{\frac{4}{N-2}}}{|y|^{N-2}} \ dy)=O((\frac{\mu_\ell}{\mu_i})^\Gamma)\\
&&\int_{\mathcal A_h}(PU_i)^{\frac{N+2}{N-2}} PU_\ell  \ dx=O( \mu_\ell^\Gamma 
\int_{\mathcal A_h} \frac{U_i^{\frac{N+2}{N-2}}}{|x|^{\beta_+}}\ dx)=
O((\frac{\mu_\ell}{\mu_i})^{\Gamma}
\int_{\frac{\mathcal A_h}{\mu_i}} \frac{U^{\frac{N+2}{N-2}}}{|y|^{\beta_+}}\ dy)
=O((\frac{\mu_\ell}{\mu_i})^\Gamma)\\
&& \int_{\mathcal A_h} (PU_\ell)^{\frac{N+2}{N-2}} PU_i \ dx=O( \frac{1}{\mu_i^\Gamma}\int_{\mathcal A_h} \frac{U_\ell^{\frac{N+2}{N-2}}}{|x|^{\beta_-}}\ dx )=
O((\frac{\mu_\ell}{\mu_i})^{\Gamma} \int_{\frac{\mathcal A_h}{\mu_\ell}} \frac{U^{\frac{N+2}{N-2}}}{|y|^{\beta_-}} \ dy)=O((\frac{\mu_\ell}{\mu_i})^\Gamma (\frac{\mu_\ell}{\mu_{\ell-1}})^{\frac{2\Gamma}{N-2}})
\end{eqnarray*}
for any $i=1,\ldots,\ell-1$ and $h=0,\ldots, \ell-1$, which inserted into the previous expression for $I_h$ give that
\begin{equation} \label{22522}
\begin{aligned}
I_h &=-\frac{2N}{N-2} \int_{\mathcal A_h}  (PU_{\ell-1})^{\frac{N+2}{N-2}} PU_\ell \ dx
+O(\int_{\mathcal A_{h}}\sum_{i=1}^{\ell -1} (PU_i)^{\frac{4}{N-2}} (PU_l)^2 \ dx +\int_{\mathcal A_{h}} (PU_l)^{\frac{2N}{N-2}}\ dx)\\
&+o((\frac{\mu_\ell}{\mu_{\ell -1}})^\Gamma)=-\frac{2N}{N-2} \int_{\mathcal A_{h}}  (PU_{\ell-1})^{\frac{N+2}{N-2}} PU_\ell \ dx\\
&+O(\int_{\mathcal A_{h}}\sum_{i=1}^{\ell -1} | (PU_i)^{\frac{4}{N-2}} PU_l|_{\frac{2N}{N+2},\mathcal A_{h} }
| PU_l|_{\frac{2N}{N-2},\mathcal A_{h}}
+| PU_l|^{\frac{2N}{N-2}}_{\frac{2N}{N-2},\mathcal A_{h} })+o((\frac{\mu_\ell}{\mu_{\ell -1}})^\Gamma)\\
&=-\frac{2N}{N-2} \int_{\mathcal A_{h}}  (PU_{\ell-1})^{\frac{N+2}{N-2}} PU_\ell \ dx
+o((\frac{\mu_\ell}{\mu_{\ell -1}})^\Gamma)
\end{aligned}
\end{equation}
for $h=0,\ldots,\ell-1$ in view of \eqref{ok1}-\eqref{ok22}. By \eqref{18355} and Lemma \ref{prop-pro} we have that
\begin{equation} \label{star18} 
\int_{\mathcal A_h}(PU_{\ell-1})^{\frac{N+2}{N-2}} PU_\ell  \ dx=
O((\frac{\mu_\ell}{\mu_{\ell-1}})^{\Gamma}
\int_{\frac{\mathcal A_h}{\mu_{\ell-1}}} \frac{U^{\frac{N+2}{N-2}}}{|y|^{\beta_+}}\ dy)=
o((\frac{\mu_\ell}{\mu_{\ell-1}})^\Gamma)
\end{equation}
for $h=0,\ldots,\ell-2$ and
\begin{equation} \label{bistar18}
\begin{aligned} 
&\int_{\mathcal A_{\ell-1}}  (PU_{\ell-1})^{\frac{N+2}{N-2}} PU_\ell \ dx \\
&= \int_{\mathcal A_{\ell-1}}  U_{\ell-1}^{\frac{N+2}{N-2}} U_\ell \ dx+O(\int_{\mathcal A_{\ell-1}} [\mu_{\ell-1}^\Gamma \frac{(U_{\ell-1})^{\frac{4}{N-2}} U_\ell}{|x|^{\beta_-}}+
\mu_{\ell-1}^{\frac{N+2}{N-2}\Gamma} \frac{U_\ell}{|x|^{\frac{N+2}{N-2}\beta_-}}+ \mu_\ell^\Gamma \frac{U_{\ell-1}^{\frac{N+2}{N-2}}}{|x|^{\beta_-}}] \ dx)\\
&= (\frac{\mu_{\ell-1}}{\mu_\ell})^{\frac{N-2}{2}} \int_{\frac{\mathcal A_{\ell-1}}{\mu_{\ell-1}}}  U^{\frac{N+2}{N-2}} U (\frac{\mu_{\ell-1} }{\mu_\ell} y)\ dy+
O \(\mu_{\ell-1}^\Gamma  \int_{\mathcal A_{\ell-1}} \frac{(U_{\ell-1})^{\frac{4}{N-2}} U_\ell}{|x|^{\beta_-}} \ dx\)\\
&+O\(
\mu_{\ell-1}^{\frac{N+2}{N-2}\Gamma} \mu_\ell^\Gamma \int_{\mathcal A_{\ell-1}} \frac{dx}{|x|^{\frac{N+2}{N-2}\beta_-+\beta_+}}\) + O(\mu_\ell^\Gamma) (\int_{\mathcal A_{\ell-1}} \frac{dx}{|x|^{\frac{2N \beta_-}{N-2}}})^{\frac{N-2}{2N}}\\
&=\alpha_N (\frac{\mu_\ell}{\mu_{\ell-1}})^\Gamma \int_{\mathbb R^N}  \frac{U^{\frac{N+2}{N-2}} }{|y|^{\beta_+}} \ dy+o((\frac{\mu_\ell}{\mu_{\ell-1} })^\Gamma)
\end{aligned}
\end{equation}
in view of
$\frac{\mu_{\ell-1} }{\mu_\ell} y \geq \sqrt{\frac{\mu_{\ell-1} }{\mu_\ell}} \to +\infty$ for all $y \in \frac{\mathcal A_{\ell-1}}{\mu_{\ell-1}}$, \eqref{diesis18} and
\begin{equation} \label{bidiesis}
\int_{\mathcal A_{\ell-1}}\frac{U_{\ell-1}^{\frac{4}{N-2}} U_\ell }{|x|^{\beta_-}}=
\left\{ \begin{array}{ll}
O\( |U_{\ell-1}^{\frac{4}{N-2}} U_\ell|_{\frac{2N}{N+2},\mathcal A_{\ell-1}}\)=O( (\frac{\mu_\ell}{\mu_{\ell-1}})^\Gamma)&\hbox{if }3\leq N\leq 5\\
O\( \frac{\mu_\ell^\Gamma}{\mu_{\ell-1}^{\frac{4\Gamma}{N-2}}} \int_{\mathcal A_{\ell-1}} \frac{dx}{|x|^{\frac{N+2}{N-2}\beta_-+\beta_+}}\)=O( (\frac{\mu_\ell}{\mu_{\ell-1}})^\Gamma)&\hbox{if } N\geq 6
\end{array} \right.
\end{equation}
thanks to \eqref{ok2}. Therefore, inserting \eqref{star18}-\eqref{bistar18} into \eqref{22522} we have the following expansion:
\begin{equation} \label{22533}
I_h=o((\frac{\mu_\ell}{\mu_{\ell -1}})^\Gamma),\qquad
I_{\ell-1} =-\frac{2N}{N-2} A_4 (\frac{\mu_\ell}{\mu_{\ell-1}})^\Gamma +o((\frac{\mu_\ell}{\mu_{\ell -1}})^\Gamma).
\end{equation}
Summing up \eqref{22511} and \eqref{22533} we get that the third term in $a_\ell$, $\ell=2,\ldots,k$, takes the form
$$-\frac{N-2}{2N} \sum_{h=0}^\ell I_h= A_4 (\frac{\mu_\ell}{\mu_{\ell-1}})^\Gamma +o((\frac{\mu_\ell}{\mu_{\ell -1}})^\Gamma),$$
which, along with \eqref{a1bis}-\eqref{12422}, finally establishes the validity of \eqref{aelle} for $a_\ell$, $\ell=2,\ldots,k$.

\subsection{The remainder term: proof of Proposition \ref{phij}}
We assume that either $\ell=1$ or $\ell\geq 2$ and $C^1-$maps $(d_1,\ldots,d_k) \in A \to \phi_{j,\eps}(d_1,\ldots,d_j) \in \mathcal K_j^\bot$ have already been constructed for $j=1,\ldots,\ell-1$ satisfying the properties of Proposition \ref{phij}. By Lemma \ref{lineare} we can rewrite the equation $\mathcal E_\ell +\mathcal L_\ell(\phi_{\ell})+\mathcal N_\ell (\phi_{1,\eps}, \ldots,\phi_{\ell-1,\eps}, \phi_{\ell})=0$ as 
 $$\phi_{\ell} =-\mathcal L^{-1}_\ell\left(\mathcal E_\ell +\mathcal N_\ell(\phi_{1,\eps}, \ldots, \phi_{\ell-1,\eps} , \phi_{\ell})\right)=\mathcal T_\ell(\phi_{\ell}).$$
Given $R>0$ large, we show below that $\mathcal T_\ell:\mathcal B_\ell \to\mathcal B_\ell= \{\phi \in \mathcal K_\ell^\bot:\ \|\phi\| \leq R R_\ell \}$ is a contraction mapping for $\eps$ small, where 
\begin{equation} \label{15022}
R_\ell=\left\{\begin{array}{ll}
\|\mathcal E_1\| &\hbox{if }\ell=1\\
({\mu_\ell\over\mu_{\ell-1}} )^{\Gamma} +({\mu_\ell\over\mu_{\ell-1}} )^{\frac{\Gamma}{2}+1}+ ({\mu_\ell \over \mu_{\ell-1}})^{\frac{N+2}{2(N-2)}\Gamma} \log^{\frac 23} \frac{1}{\mu_\ell} &\hbox{if }\ell=2,\ldots,k. \end{array} \right.
\end{equation}
Hence, for $\epsilon>0$ small it follows the existence of a unique fixed point $\phi_{\ell,\eps}(d_1,\ldots,d_\ell) \in \mathcal B_\ell$ for any $(d_1,\ldots,d_k) \in A$. By the Implicit Function Theorem it is possible to show that $(d_1,\ldots,d_k)\in A \to \phi_{\ell,\eps}(d_1,\ldots,d_\ell)$ is a $C^1-$map satisfying also \eqref{stimaphi2}. Since the proof can be made similarly as in \cite{MPV} we omit it. The validity of \eqref{stimalinfty} will be addressed at the end of this section.

\medskip \noindent Set $\mathcal N_\ell(\phi)=\mathcal N_\ell(\phi_{1, \eps}, \ldots ,\phi_{\ell-1,\eps},\phi)$. Since by Lemma \ref{lineare} 
\begin{eqnarray*}
\|\mathcal T_\ell(\phi )\| \leq c \left(\|\mathcal E_\ell\|+ \|\mathcal N_\ell(\phi)\|\right), \quad  \|\mathcal T_\ell(\phi_1)-\mathcal T_\ell(\phi_2)\| \leq c \|\mathcal N_\ell( \phi_1)-\mathcal N_\ell(\phi_2)\|, \end{eqnarray*} 
by Lemma \ref{errorej} and \eqref{mujbis}-\eqref{muj2} it is enough to show that
\begin{eqnarray}\label{cont1}
\|\mathcal N_\ell(\phi)\|=O(R_\ell)+o(\|\phi\|), \quad \|\mathcal N_\ell(\phi_1)-\mathcal N_\ell(\phi_2)\| = o(1) \|\phi_1- \phi_2\| \end{eqnarray}
uniformly for any $\phi,\phi_1,\phi_2 \in \mathcal B_\ell$. Let $f(u)=|u|^{\frac{4}{N-2}}u$ and set 
\begin{eqnarray*}
N_\ell&=& f\(\sum_{j=1}^\ell (-1)^j PU_j+\sum_{j=1}^{\ell-1} \phi_{j,\eps}+\phi\)-f\(\sum_{j=1}^\ell (-1)^j PU_j\)-f'\(\sum_{j=1}^\ell (-1)^j PU_j\)\phi\\
&&-f\(\sum_{j=1}^{\ell-1} [(-1)^j PU_j+\phi_{j,\eps}]\)+f\(\sum_{j=1}^{\ell-1}(-1)^j PU_j \).
\end{eqnarray*}
First, by \eqref{18355} for $\ell=1$ we have that
\begin{eqnarray*}
\|\mathcal N_1 (\phi)\| & \leq & c | N_1 |_{\frac{2N}{N+2}} = c
|f(-PU_1+\phi)-f(-PU_1)-f'(- PU_1)\phi |_{\frac{2N}{N+2}} \\
&\leq&
c ( |\phi |_{\frac{2N}{N-2}}^{\frac{N+2}{N-2}}+\underbrace{|U_1^{\frac{6-N}{N-2}} \phi^2|_{\frac{2N}{N+2}}}_{\hbox{if }3 \leq N \leq 5} ) =o(\|\phi\|)
\end{eqnarray*}
and then the first in \eqref{cont1} is established. For $\ell \geq 2$, by \eqref{18355} we have the expansion
\begin{equation} \label{12511}
\begin{aligned}
 N_\ell &= f \(\sum_{j=1}^\ell (-1)^j PU_j+\sum_{j=1}^{\ell-1} \phi_{j,\eps}\)-f\(\sum_{j=1}^\ell (-1)^j PU_j\)
-f\(\sum_{j=1}^{\ell-1} [(-1)^j PU_j+\phi_{j,\eps}] \) \\
&+f\(\sum_{j=1}^{\ell-1}(-1)^j PU_j\)+\[f' \(\sum_{j=1}^\ell (-1)^j PU_j+\sum_{j=1}^{\ell-1} \phi_{j,\eps}\)-f' \(\sum_{j=1}^\ell (-1)^j PU_j\)\]\phi\\
&+O(|\phi|^{\frac{N+2}{N-2}})+\underbrace{ O( \sum_{j=1}^\ell (PU_j)^{\frac{6-N}{N-2}} \phi^2 +\sum_{j=1}^{\ell-1} |\phi_{j,\eps}|^{\frac{6-N}{N-2}} \phi^2)}_{\hbox{if }3\leq N\leq 5}
\end{aligned}
\end{equation}
Letting $\mathcal A_h$ be as in \eqref{anelli}, we have that 
\begin{equation} \label{10222}
\|\mathcal N_\ell (\phi)\|\leq c \displaystyle  \sum_{h=0}^\ell | N_\ell |_{\frac{2N}{N+2},\mathcal A_h}.
\end{equation} 
By \eqref{18355} and
$$|a+b|^{\frac{4}{N-2}}-|a|^{\frac{4}{N-2}}=O(|b|^{\frac{4}{N-2}}+\underbrace{|a|^{\frac{6-N}{N-2}} |b|}_{\hbox{if }3\leq N\leq 5}),$$
for $h=0,\ldots,\ell-1$ we have 
\begin{equation}\label{Nl}
\begin{aligned}
| N_\ell |_{\frac{2N}{N+2},\mathcal A_h} & \leq c \Big| U_l^{\frac{N+2}{N-2}}+U_l \sum_{j=1}^{\ell-1}[U_j^{\frac{4}{N-2}}+|\phi_{j,\eps}|^{\frac{4}{N-2}}]+|\phi| \sum_{j=1}^{\ell-1} |\phi_{j,\eps}|^{\frac{4}{N-2}}+|\phi|^{\frac{N+2}{N-2}} \Big|_{\frac{2N}{N+2},\mathcal A_h}\\
&+c \underbrace{ \Big|  \sum_{j=1}^\ell U_j^{\frac{6-N}{N-2}} \phi^2 +\sum_{j=1}^{\ell-1} |\phi_{j,\eps}|^{\frac{6-N}{N-2}} \phi^2 +\sum_{i=1}^\ell \sum_{j=1}^{\ell-1}U_i^{\frac{6-N}{N-2}} |\phi|  |\phi_{j,\eps}| \Big|_{\frac{2N}{N+2},\mathcal A_h}}_{\hbox{if }3\leq N\leq 5}\\
&=O\( R_\ell+ \sum_{j=1}^{\ell-1} | |\phi_{j,\eps}|^{\frac{4}{N-2}} U_l   |_{\frac{2N}{N+2},\mathcal A_h}\)+
o(\|\phi\|)
\end{aligned}
\end{equation}
and
\begin{equation}\label{Nlbis}
\begin{aligned}
& | N_\ell |_{\frac{2N}{N+2},\mathcal A_\ell}  \leq c \Big| U_l^{\frac{4}{N-2}}\sum_{j=1}^{\ell-1} |\phi_{j,\eps}|
+\sum_{j=1}^{\ell-1} |\phi_{j,\eps}|^{\frac{N+2}{N-2}}+|\phi|^{\frac{N+2}{N-2}} \Big|_{\frac{2N}{N+2},\mathcal A_\ell}\\
&+c \underbrace{ \Big| U_l  \sum_{i,j=1}^{\ell-1} U_i^{\frac{6-N}{N-2}} |\phi_{j,\eps}|
+\sum_{i=1}^{\ell} \sum_{j=1}^{\ell-1}  U_i^{\frac{6-N}{N-2}} \phi_{j,\eps}^2+
 \sum_{j=1}^\ell U_j^{\frac{6-N}{N-2}} \phi^2 +\sum_{j=1}^{\ell-1} |\phi_{j,\eps}|^{\frac{6-N}{N-2}} \phi^2 \Big|_{\frac{2N}{N+2},\mathcal A_\ell}}_{\hbox{if }3\leq N\leq 5}\\
&=O\( \sum_{j=1}^{\ell-1} | U_\ell^{\frac{4}{N-2}} \phi_{j,\eps} |_{\frac{2N}{N+2},\mathcal A_\ell}+ \sum_{j=1}^{\ell-1} |  \phi_{j,\eps}|^{\frac{N+2}{N-2}} _{\frac{2N}{N-2},\mathcal A_\ell}
+\sum_{j=1}^{\ell-1} |  \phi_{j,\eps}|^2 _{\frac{2N}{N-2},\mathcal A_\ell} \)\\
&+\underbrace{O\( \sum_{i,j=1}^{\ell-1}  | U_\ell^{\frac{4}{N-2}} \phi_{j,\eps} |^{\frac{N-2}{4}}_{\frac{2N}{N+2},\mathcal A_\ell}
|U_i|^{\frac{6-N}{N-2}}_{\frac{2N}{N-2},\mathcal A_\ell} |  \phi_{j,\eps}|^{\frac{6-N}{4}}_{\frac{2N}{N-2},\mathcal A_\ell}\)}_{\hbox{if }3\leq N\leq 5}+o(\|\phi\|)
\end{aligned}
\end{equation}
for any $\phi \in \mathcal B_\ell$ in view of \eqref{ok1}-\eqref{ok22} and H\"older inequality, where $R_\ell$ is given in \eqref{15022}. Notice in the estimate \eqref{Nl} we couple the first/second term in the expression \eqref{12511} of $N_\ell$ with the third/fourth one, while in the estimate \eqref{Nlbis} the first two and the second two terms in \eqref{12511} are coupled.

\medskip \noindent For $j=1,\ldots, \ell-1 $  there holds $\mathcal A_\ell \subset B_{\rho \mu_j}(0)$
and by \eqref{stimalinfty} we deduce that 
\begin{equation}
| \phi_{j,\eps}|_{\frac{2N}{N-2},\mathcal A_\ell} \le \frac{c}{\mu_j^\Gamma} 
| \frac{1}{|x|^{\beta_-}}|_{\frac{2N}{N-2},\mathcal A_\ell}
\leq c (\frac{\sqrt{\mu_{\ell-1}\mu_\ell}}{\mu_j})^\Gamma 
=O\((\frac{\mu_\ell}{\mu_{\ell-1}})^{\frac{\Gamma}{2}}\)
\label{13211}
\end{equation}
and
\begin{equation} \label{15599} 
\begin{aligned}
&| U_\ell^{\frac{4}{N-2}} \phi_{j,\eps} |_{\frac{2N}{N+2},\mathcal A_\ell}
\leq \frac{c}{\mu_j^\Gamma} 
| \frac{U_\ell^{\frac{4}{N-2}}}{|x|^{\beta_-}}|_{\frac{2N}{N+2},\mathcal A_\ell}\\
& \leq \left\{\begin{array}{ll}
c (\frac{\mu_\ell}{\mu_j})^\Gamma 
| \frac{U^{\frac{4}{N-2}}}{|y|^{\beta_-}}|_{\frac{2N}{N+2},\frac{\mathcal A_\ell}{\mu_\ell}}
=O\((\frac{\mu_\ell}{\mu_{\ell-1}})^{\Gamma} \)
&\hbox{if }3\leq N\leq 5\\
c (\frac{\mu_\ell}{\mu_j})^\Gamma 
| \frac{U^{\frac{4}{N-2}}}{|y|^{\beta_-}}|_{\frac{2N}{N+2},\frac{\mathcal A_\ell}{\mu_\ell}}
=O\( (\frac{\mu_\ell}{\mu_{\ell-1}})^{\Gamma} \log^{\frac{2}{3}} \frac{1}{\mu_\ell}\)
&\hbox{if } N=6\\
c \frac{\mu_\ell ^{\frac{4 \Gamma}{N-2}}}{\mu_j^\Gamma} 
| \frac{1}{|x|^{\beta_-+\frac{4}{N-2} \beta_+}}|_{\frac{2N}{N+2},\mathcal A_\ell}
=O\( (\frac{\mu_\ell }{\mu_{\ell-1}})^{\frac{N+2}{N-2}\frac{\Gamma}{2}}\)
&\hbox{if }N\geq 7.
\end{array} \right.
\end{aligned}\end{equation}
For $h=0,\ldots,\ell-2$ by \eqref{ok1} we deduce
\begin{equation} \label{17000}
|| \phi_{j,\eps}|^{\frac{4}{N-2}} PU_\ell  |_{\frac{2N}{N+2}, \mathcal A_h} \le c \|\phi_{j,\eps}\|^{\frac{4}{N-2}}   |  PU_\ell 
|_{\frac{2N}{N-2}, \mathcal A_h}
\le   c  \left(\frac{\mu_\ell}{\sqrt{\mu_h \mu_{h+1}}}\right)^{\Gamma} = O\( (\frac{\mu_\ell}{\mu_{\ell-1}})^\Gamma \)
\end{equation}
for any $j=1,\ldots,\ell-1$. Splitting $\mathcal A_{\ell-1}$ as $\mathcal A'_{\ell-1}\cup \mathcal A''_{\ell-1}$, where $\mathcal A'_{\ell-1}=\mathcal A_{\ell-1}\cap B_{ \rho \mu_{\ell-1}}(0)$ and $\mathcal A''_{\ell-1}=\mathcal A_{\ell-1}\setminus B_{ \rho \mu_{\ell-1}}(0)$, by \eqref{stimalinfty} and \eqref{ok1} we get that
\begin{equation} \label{17002}
\begin{aligned}
& | |\phi_{j,\eps}|^{\frac{4}{N-2}} PU_\ell |_{\frac{2N}{N+2}, \mathcal A_{\ell-1}}  \le \frac{c}{\mu_j^{\frac{4 \Gamma}{N-2} }} |\frac{PU_\ell}{|x|^{\frac{4\beta_-}{N-2}}}|_{\frac{2N}{N+2}, \mathcal A'_{\ell-1}}
+c \|\phi_{j,\eps} \|^{\frac{4}{N-2}}|P U_\ell|_{\frac{2N}{N-2}, \mathcal A''_{\ell-1} }\\
& \leq  
\frac{c}{ \mu_j^{\frac{4\Gamma}{N-2}}} |PU_\ell|_{\frac{2N}{N+2}, \mathcal A'_{\ell-1}} \left\{ \begin{array}{ll}
\mu_{\ell-1}^{\frac{4\Gamma}{N-2}-2}&\hbox{if }\Gamma \geq \frac{N-2}{2}\\
(\mu_{\ell-1}\mu_\ell)^{\frac{2\Gamma}{N-2}-1}&\hbox{if }\Gamma <\frac{N-2}{2}
\end{array} \right.
+c (\frac{\mu_\ell}{\mu_{\ell-1}})^{\Gamma} \\
&=O\(  (\frac{\mu_\ell}{\mu_{\ell-1}})^{\frac{N+2}{N-2}\frac{\Gamma}{2}}+  (\frac{\mu_\ell}{\mu_{\ell-1}})^{\frac{\Gamma}{2}+1} \)
\end{aligned}
\end{equation}
in view of 
\begin{equation}\label{ok31}
\begin{aligned}
& |PU_\ell  |_{\frac{2N}{N+2}, \mathcal A'_{\ell-1}}\leq
 \mu_\ell^2 | U   |_{\frac{2N}{N+2}, \mathbb R^N \setminus B_{\sqrt{\frac{\mu_{\ell-1}}{\mu_\ell}}}(0)}=O\(
\mu_\ell^2 (\frac{\mu_\ell}{\mu_{\ell-1}})^{\frac{\Gamma}{2}-1}\)\\
&|PU_\ell  |_{\frac{2N}{N-2}, \mathcal A''_{\ell-1} }\leq
 | U   |_{\frac{2N}{N-2}, \mathbb R^N \setminus B_{ \frac{\mu_{\ell-1}}{\mu_\ell}}(0)}=O\((\frac{\mu_\ell}{\mu_{\ell-1}})^\Gamma\).
\end{aligned}
\end{equation}
Estimates \eqref{ok1}, \eqref{13211}-\eqref{15599} into \eqref{Nlbis} and \eqref{17000}-\eqref{17002} into \eqref{Nl} lead to 
$| N_\ell |_{\frac{2N}{N+2},\mathcal A_h}=O(R_\ell)+o(\|\phi\|)$ for any $h=0,\ldots,\ell$, which, inserted into \eqref{10222}, finally give the validity of the first in \eqref{cont1} for $\ell\geq 2$.

\medskip \noindent Concerning the second one in \eqref{cont1}, we have that
\begin{equation*}
\begin{aligned}
\|\mathcal N_\ell(\phi_1)-\mathcal N_\ell(\phi_2)\|  & \leq c \left| [\sum_{j=1}^{\ell-1}|\phi_{j, \eps}|^{\frac{4}{N-2}}+|\phi_1|^{\frac{4}{N-2}}+|\phi_2|^{\frac{4}{N-2}}] (\phi_1-\phi_2)\right|_{\frac{2N}{N+2}}\\
& +\underbrace{ c \left| \sum_{i=1}^\ell U_i^{\frac{6-N}{N-2}}
[\sum_{j=1}^{\ell-1}   |\phi_{j, \eps}|+|\phi_1|+|\phi_2|]  (\phi_1-\phi_2) \right|_{\frac{2N}{N+2}}}_{\hbox{if }3\leq  N \leq 5} \end{aligned}
\end{equation*}
in view of
\begin{eqnarray*}
&&|a+b+c_1|^{\frac{4}{N-2}}(a+b+c_1)- |a+b+c_2|^{\frac{4}{N-2}}(a+b+c_2)-\frac{N+2}{N-2} |a|^{\frac{4}{N-2}}(c_1-c_2)\\
&& =|c_1-c_2| O\( |b|^{\frac{4}{N-2}}+|c_1|^{\frac{4}{N-2}}+|c_2|^{\frac{4}{N-2}}
+ \underbrace{|a|^{\frac{6-N}{N-2}}(|b|+|c_1|+|c_2|)}_{\hbox{if }3\leq N \leq 5} \)
\end{eqnarray*}
for all $a,b,c_1,c_2 \in \mathbb R$. Therefore there holds
\begin{equation} \label{17557}
\begin{aligned}
\|\mathcal N_\ell(\phi_1)-\mathcal N_\ell(\phi_2)\|  & \leq c \[\sum_{j=1}^{\ell-1}\| \phi_{j, \eps}\|^{\frac{4}{N-2}}+\| \phi_1\|^{\frac{4}{N-2}}+\| \phi_2\|^{\frac{4}{N-2}} \]
\|\phi_1-\phi_2\| \\
& +\underbrace{ c  \sum_{i=1}^\ell |U_i|^{\frac{6-N}{N-2}}_{\frac{2N}{N-2}}
\[\sum_{j=1}^{\ell-1}   \|\phi_{j, \eps}\|+\| \phi_1 \|+\| \phi_2\|  \] \|\phi_1-\phi_2\|}_{\hbox{if }3\leq  N \leq 5}\\ 
&=o(1) \|\phi_1-\phi_2\|.
\end{aligned}
\end{equation}
in view of $\phi_1,\phi_2 \in \mathcal B_\ell$. The validity of \eqref{cont1} has been fully established.

\medskip \noindent To prove the validity of \eqref{stimalinfty}, assume that either $\ell=1$ or $\ell\geq 2$ and $C^1-$maps $(d_1,\ldots,d_k) \in A \to \phi_{j,\eps}(d_1,\ldots,d_j) \in \mathcal K_j^\bot$ have already been constructed for $j=1,\ldots,\ell-1$ satisfying the properties of Proposition \ref{phij}. Setting $u_j=(-1)^j PU_j +\phi_{j, \eps}$, $j=1,\ldots, \ell$, we have that $u_j$ satisfies
\begin{equation}
\label{18222}
u_j=\i^* \[f(\sum_{i=1}^j u_i)-f (\sum_{i=1}^{j-1} u_i)+\e u_j \]+\Psi_j, \quad \Psi_j \in \mathcal K_j,
\end{equation}
for any $j=1,\ldots,\ell$, and then 
\begin{equation}
\label{18222bis}
\sum_{j=1}^\ell u_j=\i^* \[f(\sum_{i=1}^\ell u_i)+\e \sum_{j=1}^\ell u_j \]+\sum_{j=1}^\ell \lambda_{j , \eps} P Z _j
\end{equation}
in view of $\displaystyle \sum_{j=1}^\ell \Psi_j \in \mathcal K_\ell$. We claim that $ \lambda_{j,\eps}=o(1)$   as $\eps\to 0$ for any $j=1,\ldots, \ell$. Indeed, let us take the inner product of \eqref{18222bis} against $PZ _i$, $i=1,\dots,\ell$, to get
\begin{equation}\label{eqMint1-2}
\begin{aligned}
\sum_{j=1}^\ell \lambda_{j, \eps}\langle P Z _j,PZ_i \rangle  
& =\int_\Omega [\sum_{j=1}^\ell (-1)^j  U_j^{\frac{N+2}{N-2}}-f(\sum_{j=1}^\ell u_j)] PZ _i\, dx  +\sum_{j=1}^{i-1} \langle  \phi_{j, \eps}, PZ_i \rangle\\
& -\e\sum_{j=1}^\ell\int_\Omega u_j  PZ _i\, dx
\end{aligned}
\end{equation}
in view of $\phi_{j, \eps}\in \mathcal K_i^\bot$ for any $j \geq i$ and $PU_j=\i^*(U_j^{\frac{N+2}{N-2}})$. By Proposition \ref{prop-proZ} and \eqref{18377} we have that
\begin{equation}\label{11025}
\begin{aligned}
\int_\Omega (PZ_i)^{\frac{2N}{N-2}} \ dx & =\int_\Omega Z_i^{\frac{2N}{N-2}} \ dx+O\( 
\mu_i^\Gamma \int_\Omega \frac{Z_i^{\frac{N+2}{N-2}}}{|x|^{\beta_-}} \ dx+
\int_\Omega \frac{\mu_i^{\frac{2N}{N-2}\Gamma}}{|x|^{\frac{2N}{N-2}\beta_-}}\ dx \)\\
&=
\int_{\frac{\Omega}{\mu_i}} Z^{\frac{2N}{N-2}} \ dy+O\( 
\mu_i^{2 \Gamma} \int_{\frac{\Omega}{\mu_i}} \frac{Z^{\frac{N+2}{N-2}}}{|y|^{\beta_-}} \ dy\)+o(1)
=\int_{\mathbb{R}^N } Z^{\frac{2N}{N-2}} \ dy+o(1) 
\end{aligned}
\end{equation}
and
\begin{equation}\label{czero}
\begin{aligned}
 \langle PZ _j,PZ _i \rangle&= \frac{N+2}{N-2} \int_\Omega U_j^{\frac{4}{N-2}} Z _jPZ _i\, dx  \\
&=  \frac{N+2}{N-2} \int_\Omega U_j^{\frac{4}{N-2}} Z _j Z _i\, dx+
O\( \mu_i^\Gamma \int_\Omega \frac{U_j^{\frac{4}{N-2}} Z _j}{|x|^{\beta_-}}  \, dx \) \\
&= \frac{N+2}{N-2} \delta_{ij} \int_{\frac{\Omega}{\mu_j}} U^{\frac{4}{N-2}} Z ^2  \, dy +
O\( \mu_i^\Gamma \mu_j^\Gamma \int_{\frac{\Omega}{\mu_j}} \frac{U^{\frac{4}{N-2}} Z }{|y|^{\beta_-}}  \, dy \)+o(1)\\
& = \frac{N+2}{N-2} \delta_{ij} \int_{\mathbb{R}^N} U^{\frac{4}{N-2}} Z ^2  \, dy +o(1)
 \end{aligned}\end{equation}
in view of  $PZ_j=\i^* (\frac{N+2}{N-2} U_j^{\frac{4}{N-2}}  Z_j)$, $|Z_i|\leq U_i$ and
\begin{equation*}
|\int_\Omega U_j^{\frac{4}{N-2}} Z _j  Z _i \ dx | \leq c \int_\Omega U_j^{\frac{N+2}{N-2}} U_i \ dx
\leq \left\{ \begin{array}{ll}
c (\frac{\mu_j}{\mu_i})^\Gamma \int_{\mathbb R^N}\frac{U^{\frac{N+2}{N-2}}}{|y|^\bm} \, dy & \hbox{if }j>i\\
c (\frac{\mu_i}{\mu_j})^\Gamma \int_{\mathbb R^N}\frac{U^{\frac{N+2}{N-2}}}{|y|^\bp} \ dy
& \hbox{if }j<i.
\end{array} \right.
\end{equation*}
By inserting \eqref{11025}-\eqref{czero} into \eqref{eqMint1-2} we get that
\begin{equation}\label{1258}
\begin{aligned}
\frac{N+2}{N-2} \( \int_{\mathbb{R}^N} U^{\frac{4}{N-2}} Z ^2  \, dy\)  \lambda_{i, \eps}
&=
\int_\Omega [\sum_{j=1}^\ell (-1)^j  (PU_j)^{\frac{N+2}{N-2}}-f(\sum_{j=1}^\ell (-1)^j PU_j)] PZ _i\, dx \\ &+o(\sum_{j=1}^\ell |\lambda_{j, \eps}|)+
o(1)
\end{aligned}
\end{equation}
in view of \eqref{stimaphi1}, \eqref{113}-\eqref{112}, \eqref{18355} and $\| PU_j \|=O(1)$. We have that
$$\begin{aligned}
&\int_\Omega [\sum_{j=1}^\ell (-1)^j  (PU_j)^{\frac{N+2}{N-2}}-f(\sum_{j=1}^\ell (-1)^j PU_j)] PZ _i\, dx\\ &=-\sum_{j=1}^\ell \int_\Omega \[ f\(\sum_{i=1}^j (-1)^i PU_i\) -f\(\sum_{i=1}^{j-1} (-1)^i PU_i\)-(-1)^j  (PU_j)^{\frac{N+2}{N-2}}\] PZ _i\, dx \\
&=
O \( \sum_{h=0}^\ell \Big| f\(\sum_{i=1}^j (-1)^i PU_i\) -f\(\sum_{i=1}^{j-1} (-1)^i PU_i\)-(-1)^j  (PU_j)^{\frac{N+2}{N-2}}\Big|_{\frac{2N}{N+2},\mathcal A_h}\)
\end{aligned} $$
in view of \eqref{11025}, with $\mathcal A_h$ given as in \eqref{anelli}. By \eqref{15066}-\eqref{ok2222} we deduce that
$$\int_\Omega [\sum_{j=1}^\ell (-1)^j  (PU_j)^{\frac{N+2}{N-2}}-f(\sum_{j=1}^\ell (-1)^j PU_j)] PZ _i\, dx=o(1),$$
and then \eqref{1258} reduces to
$$\frac{N+2}{N-2} \( \int_{\mathbb{R}^N} U^{\frac{4}{N-2}} Z ^2  \, dy\)  \lambda_{i, \eps}
=o(\sum_{j=1}^\ell |\lambda_{j, \eps}|)+o(1).$$
This in turn implies that $\displaystyle \sum_{j=1}^\ell |\lambda_{j, \eps}|=o(1)$, and the claim is established.
 
\medskip \noindent The function $\mathcal U_\ell(y)=\mu_\ell^{\frac{N-2}{2}}  (\displaystyle \sum_{j=1}^\ell u_j)(\mu_\ell y )$ solves
\begin{equation}\label{eqM2ell}
\begin{aligned}
&-\Delta \mathcal U_\ell- \frac{\gamma}{|y|^2} \mathcal U_\ell -\eps\mu_\ell^2 \mathcal U_\ell- \mathcal U_\ell^{\frac{N+2}{N-2}}=h \quad \hbox{in } \frac{\Omega}{\mu_\ell}
\end{aligned}
\end{equation}
in view of \eqref{18222bis}, where 
$$h=O\( \sum_{j=1}^{\ell} |\lambda_{j,\eps}| (\frac{\mu_\ell}{\mu_j})^{\frac{N+2}{2}} 
\ U^{\frac{N+2}{N-2}} (\frac{\mu_\ell}{\mu_j}y) \).$$ 
We have that
$$|y|^\tau |h(y)|=O(\sum_{j=1}^{\ell} |\lambda_{j,\eps}| (\frac{\mu_\ell}{\mu_j})^{\frac{N+2}{N-2}\Gamma}) =O(1)$$
with $\tau=\frac{N+2}{N-2}\beta_-<\beta_-+2$ and
$$\(\int_{B_r(0)}  |\mathcal U_\ell |^{\frac{2N}{N-2}} \ dy\)^{\frac{N-2}{2N}} \leq \ell \(\int_{B_r(0)} U^{\frac{2N}{N-2}} \ dy\)^{\frac{N-2}{2N}}+\sum_{j=1}^\ell \|\phi_{j,\epsilon}\|^{\frac{2N}{N-2}} \leq \epsilon$$
in view of $B_{r \mu_\ell} \subset B_{r \mu_j}$ for any $j=1,\ldots,\ell-1$ and \eqref{stimaphi1}, for some $r=r(\epsilon)$. We are in position to apply Proposition \ref{astratto} below to get the existence of $\rho,K>0$ such that
$$|y|^\bm |\mathcal U_\ell(x)|\le K$$
for all $x \in B_\rho(0)$, or equivalently
$$|x|^\bm  |\sum_{j=1}^\ell u_j (x)| \le \frac{K}{\mu_\ell^{\Gamma}} \quad \mbox{ in } B_{\rho \mu_\ell}(0).$$
Since by assumption for any $j=1,\ldots,\ell-1$
$$|u_j|\leq PU_j+|\phi_{j,\e}|\leq \frac{C}{\mu_j^\Gamma |x|^{\beta_-}}\leq \frac{C}{\mu_\ell^\Gamma |x|^{\beta_-}}$$
in $B_{\rho \mu_j}(0)$ with $B_{\rho \mu_\ell}(0)\subset B_{\rho \mu_j}(0)$, we deduce that
$|u_\ell| \leq \frac{C}{\mu_\ell^\Gamma |x|^{\beta_-}}$ and then $|\phi_{\ell,\e}| \leq \frac{C}{\mu_\ell^\Gamma |x|^{\beta_-}}$ in $B_{\rho \mu_\ell}(0)$, and \eqref{stimalinfty} is established.

\medskip \noindent The following result is established using the same scheme as in \cite{GhRo} and for convenience we reproduce it here. 
\begin{proposition}\label{astratto}
Let $M>0$ and $\tau< \beta_-+2$. There exist $\varepsilon,\rho,K>0$ so that 
\begin{equation}\label{tesiastratto}
\sup_{x \in B_\rho(0)}|x|^\bm |u(x)|\le K
\end{equation}
does hold for any solution $u$ of
\begin{equation}\label{operatore}
-\Delta u -\frac{\gamma}{|x|^2} u=a u+|u|^{\frac{4}{N-2}}u+ h \mbox{ in }B_1(0),\quad u \in H^1(B_1(0)),
\end{equation}
with 
\begin{eqnarray}\label{ipotesiastratto}
&& |u|_{\frac{2N}{N-2},B_1(0)} \leq \varepsilon\\
\label{iph}
&& |a|_{\infty,B_1(0)} +\sup_{x \in B_1(0)}|x|^\tau |h(x)|\leq M.
\end{eqnarray}
\end{proposition}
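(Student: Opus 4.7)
The target estimate says $u$ is at worst as singular at the origin as the least singular homogeneous solution $|x|^{-\beta_-}$ of $L_\gamma$. The natural strategy is a weighted Moser iteration on a small ball followed by a rescaling and standard interior estimates on dyadic annuli; the smallness $|u|_{\frac{2N}{N-2},B_1(0)}\le\varepsilon$ is what enables the critical nonlinearity to be absorbed throughout.

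\emph{Step 1: weighted integrability.} Fix a cutoff $\phi$ supported in $B_{\rho_0}(0)$ with $\rho_0$ small, and test \eqref{operatore} against $|u|^{2(s-1)}u\phi^2|x|^{2\alpha}$ for a weight exponent $\alpha$ to be tuned. Integration by parts produces a positive weighted gradient term, a negative Hardy contribution $-\gamma\int|u|^{2s}\phi^2|x|^{2\alpha-2}dx$, a critical term $\int|u|^{2s+\frac{4}{N-2}}\phi^2|x|^{2\alpha}dx$, a subcritical term controlled by $|a|_\infty\le M$, and a source term controlled by $\sup|x|^\tau|h|\le M$. The weighted Hardy inequality absorbs the Hardy contribution into the gradient term with a strictly positive remainder (this is where $\gamma<(N-2)^2/4$ is used, with $\alpha$ tuned so that the best weighted Hardy constant exceeds $\gamma$); H\"older together with the Sobolev embedding and the smallness $|u|_{\frac{2N}{N-2},B_1(0)}\le\varepsilon$ absorbs the critical term when $\varepsilon$ is small; the hypothesis $\tau<\beta_-+2$ makes the contribution of $h$ finite. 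Bootstrapping in $s$ then produces
$$\int_{B_{\rho_0/2}(0)}|x|^{2\alpha(q)}|u|^{2q}dx\le C_q\qquad\text{for every finite }q,$$
with $\alpha(q)$ chosen so that the weight exponent is compatible with the expected decay rate $|x|^{-\beta_-}$.

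\emph{Step 2: rescaling and pointwise bound.} On each dyadic shell $A_r=\{r\le|x|\le 2r\}$ with $r\le\rho_0/4$, set $u_r(y)=r^{\beta_-}u(ry)$. A direct computation, using $\tfrac{4\beta_-}{N-2}=2-\tfrac{4\Gamma}{N-2}$, shows that $u_r$ satisfies on the fixed annulus $A_1$ an equation
$$L_\gamma u_r=r^2\,a(ry)\,u_r+r^{\frac{4\Gamma}{N-2}}|u_r|^{\frac{4}{N-2}}u_r+r^{\beta_-+2}h(ry),$$
whose coefficients are uniformly controlled on $A_1$: the first is bounded by $r^2 M$; the second has $L^{N/2}(A_1)$ norm of order $r^{\frac{4\Gamma}{N-2}}|u_r|_{\frac{2N}{N-2},A_1}^{\frac{4}{N-2}}$, which is small thanks to \eqref{ipotesiastratto} and $\Gamma>0$; the third is bounded by $Mr^{\beta_-+2-\tau}$, which is controlled because $\tau<\beta_-+2$. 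Standard interior Moser estimates on $A_1$ for equations with small critical potential then yield $\|u_r\|_{L^\infty(A_1)}\le C\|u_r\|_{L^{2N/(N-2)}(A_1)}+C$, and the right-hand side is uniformly bounded in $r$ by Step 1. Translating back gives $|x|^{\beta_-}|u(x)|\le K$ on $A_r$ uniformly, hence on $B_\rho(0)$.

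\emph{Main obstacle.} The delicate point is the choice of $\alpha$ in Step 1: one must pick the weight so that the weighted Hardy inequality leaves a strictly positive residue after absorbing the $\gamma/|x|^2$ potential \emph{and} so that the resulting weight exponent matches the decay $|x|^{-\beta_-}$ predicted by $\beta_-(N-2-\beta_-)=\gamma$. Both requirements can be met precisely because $\gamma<(N-2)^2/4$. A secondary technical issue is propagating the smallness hypothesis through the iteration so that the critical term can be absorbed at every step, which is the role of $\varepsilon$ in \eqref{ipotesiastratto}.
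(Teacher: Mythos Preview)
Your overall architecture (Moser-type integrability estimates followed by a rescaling to a fixed annulus) matches the paper's, but the execution diverges at the crucial point and, as written, has a genuine gap.

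\medskip
\textbf{What the paper actually does.} The paper proceeds in \emph{three} steps rather than two. First, an \emph{unweighted} Moser iteration (testing with $\eta^2 G_L(u)$, $G_L$ a truncation of $|t|^{q-2}t$) yields $u\in L^q(B_{1/2})$, but only in the limited range $q<Q$ with $Q=N/\beta_-$ when $\gamma>0$: the coercivity constraint $\tfrac{4(q-1)}{q^2}>\tfrac{4\gamma}{(N-2)^2}$ caps $q$ there. Second, rescaling with the exponent $N/q$ (not $\beta_-$) and standard interior estimates on annuli give the suboptimal pointwise bound $|u(x)|\le K|x|^{-N/q}$ with $N/q>\beta_-$. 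Third --- and this is the step you are missing --- a \emph{barrier/maximum principle} argument with the explicit supersolution $\Phi(x)=|x|^{-\beta_-}-|x|^{-\alpha}$ (for suitable $\alpha\in(\beta_--\theta,\beta_-)$) upgrades the bound to the sharp $|x|^{-\beta_-}$, using that the preliminary pointwise control makes the potential $|x|^2|u|^{4/(N-2)}$ H\"older-small near $0$.

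\medskip
\textbf{Where your argument stalls.} Your Step~2 rescales with the \emph{optimal} exponent $\beta_-$ and then invokes interior Moser on $A_1$ to get $\|u_r\|_{L^\infty(A_1)}\le C\|u_r\|_{L^{2N/(N-2)}(A_1)}+C$. But
\[
\|u_r\|_{L^{2N/(N-2)}(A_1)}=r^{-\Gamma}\,\|u\|_{L^{2N/(N-2)}(A_r)},
\]
so uniform boundedness in $r$ requires $\|u\|_{L^{2N/(N-2)}(A_r)}=O(r^\Gamma)$, i.e.\ an honest weighted bound $\int_{B_\rho}|x|^{-2N\Gamma/(N-2)}|u|^{2N/(N-2)}dx<\infty$. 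The hypothesis $|u|_{\frac{2N}{N-2},B_1}\le\varepsilon$ alone gives only $\|u_r\|_{L^{2N/(N-2)}(A_1)}\le\varepsilon\,r^{-\Gamma}$, which blows up. So everything hinges on Step~1 delivering this specific weighted integrability, and your sketch does not establish it: the weighted Hardy constant $\bigl(\tfrac{N-2+2\alpha}{2}\bigr)^2$ moves with $\alpha$, the coercivity constraint $\tfrac{2s-1}{s^2}\cdot\bigl(\tfrac{N-2+2\alpha}{2}\bigr)^2>\gamma$ still degenerates as $s\to\infty$ for any fixed $\alpha$, and letting $\alpha$ vary with $s$ changes the weighted Sobolev embedding at each step in a way you have not tracked. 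The claim ``for every finite $q$'' with a compatible $\alpha(q)$ is precisely the hard point, not a formality.

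\medskip
\textbf{How to fix it.} Either (i) carry out the weighted iteration carefully --- this can be done but is delicate and essentially amounts to iterating in Caffarelli--Kohn--Nirenberg spaces --- or (ii) follow the paper's route: run the unweighted Moser only up to $q<N/\beta_-$, rescale with $|x|^{N/q}$ to obtain $|u|\le K|x|^{-N/q}$, and then close the gap from $N/q$ to $\beta_-$ by the comparison with $\Phi=|x|^{-\beta_-}-|x|^{-\alpha}$. The barrier step is short and robust and is exactly what compensates for the exponent ceiling imposed by the Hardy term.
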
 
\begin{proof} We need some preliminary facts.

\medskip \noindent \underline{{\bf $1^{\hbox{st}}$ Claim}}: Let $M>0$ and $q>2 $ with $\frac{4(q-1)}{q^2}>\frac{4\gamma}{(N-2)^2}$. There exist $\epsilon,K>0$ so that for any $0<\rho_2<\rho_1\leq 1$ there holds
\begin{equation}\label{11255}
|u|_{B_{\rho_2}(0), \frac{Nq}{N-2}} \leq K \[  |u|_{B_{\rho_1}(0),q}+
|h(x)|_{B_{\rho_1}(0),\frac{Nq}{N-2+2 q}}\]
\end{equation}
for any solution $u \in L^q (B_{\rho_1}(0))$ of \eqref{operatore} so that \eqref{ipotesiastratto}, $h^{\frac{N}{N-2+2 q}} \in L^q (B_{\rho_1}(0))$ and $|a|_{\infty,B_{\rho_1}(0)} \leq M$ do hold.

\medskip \noindent Indeed, given $L>0$ define 
$$G_L(t)=\left\{
\begin{array}{ll}
|t|^{q-2} t &\mbox{if } |t|\le L\\ 
(q-1) L^{q-2}t-(q-2) L^{q-1}\ \hbox{sign} \ t &\mbox{if } |t| > L\\
\end{array}\right. 
$$
and
$$H_L(t)=\left\{
\begin{array}{ll}
|t|^{\frac{q}{2}} &\mbox{if } |t|\le L\\ 
\frac{q}{2} L^{\frac{q-2}{2}}|t|-\frac{q-2}{2} L^{\frac{q}{2}} &\mbox{if } |t| > L
\end{array} \right. $$
in such a way that $H_L,G_L \in C^1(\mathbb R)$ satisfy
\begin{eqnarray}\label{1247bis}
G_L'(t)=\frac{4(q-1)}{q^2}[H_L'(t)]^2, \quad t \in \mathbb R.
\end{eqnarray}
Observe that for all $t\in\mathbb R$ there hold
\begin{eqnarray}\label{propGLHL}
0\leq t G_L(t)\leq H_L^2(t), \qquad |G_L(t)|  \leq H_L^{\frac{2 (q-1)}{q}}  (t).
\end{eqnarray}
Given $0<\rho_2<\rho_1\leq 1,$ let $\eta\in C^\infty_c(\mathbb R^N)$ be so that $\eta=1$ in $B_{\rho_2}(0)$ and $\eta=0$ in $ \mathbb R^N \setminus B_{\rho_1}(0)$. Test \eqref{operatore} against $\eta^2 G_L(u)$ to get
\begin{equation}\label{mul}
\begin{aligned}
& \int_{B_1(0)} \langle \nabla u, \nabla (\eta^2 G_L(u)) \rangle dx-\int_{B_1(0)}\frac{\gamma}{|x|^2}\eta^2 uG_L(u)\, dx \\
&=\lambda\int_{B_1(0)}\eta^2 u G_L(u)\, dx +\int_{B_1(0)}\eta^2 |u|^{\frac{4}{N-2}}u G_L(u)\, dx + \int_{B_1(0)}\eta^2  h(x) G_L(u)\, dx.
\end{aligned}
\end{equation}
By \eqref{1247bis} an integration by parts leads to
\begin{equation}\label{mul1}
\begin{aligned}
& \int_{B_1(0)} \langle \nabla u, \nabla (\eta^2 G_L(u)) \rangle dx =\frac{4(q-1)}{q^2}\int_{B_1(0)} |\nabla (\eta H_L(u))|^2 \\
& +\frac{4(q-1)}{q^2}\int_{B_1(0)} \eta  \Delta \eta H_L^2(u) \, dx  -\int_{B_1(0)} \Delta(\eta^2) J_L(u)\, dx
\end{aligned}
\end{equation}
where $J_L(t)=\int_0^t G_L(\tau)\, d\tau.$ Inserting \eqref{mul1} into \eqref{mul} we get
\begin{equation}\label{mul2}
\begin{aligned}
&\frac{4\alpha}{(\alpha+1)^2}\int_{B_1(0)}|\nabla (\eta H_L(u))|^2\, dx -\int_{B_1(0)}\frac{\gamma}{|x|^2}\eta^2 uG_L(u)\, dx\\
&\le K   \int_{B_{\rho_1}(0)}[ H_L^2(u)+ J_L(u)]\ dx
+K   \int_{B_1(0)}\left\{  |u|^{\frac{4}{N-2}} [\eta H_L(u)]^2+\eta^2  |h(x)| |G_L(u)| \right\} \ dx
\end{aligned}
\end{equation}
in view of \eqref{propGLHL}, where $K$ denotes a generic constant just depending on $q$, $M$, $\gamma$, $N$ and $\rho_1,\rho_2$. By H\"older and Sobolev inequalities we have that
\begin{equation}\label{mul3}
\begin{aligned}
\int_{B_1(0)}|u|^{\frac{4}{N-2}} [\eta H_L(u)]^2\, dx&\le \(\int_{B_1(0)}|u|^{\frac{2N}{N-2}}  dx\)^{\frac{2}{N}}\(\int_{B_1(0)}|\eta H_L(u)|^{\frac{2N}{N-2}}\, dx\)^{\frac{N-2}{N}}\\
&\le K  \varepsilon^{\frac{4}{N-2}} \int_{B_1(0)}|\nabla (\eta H_L(u))|^2\, dx
\end{aligned}
\end{equation}
in view of \eqref{ipotesiastratto} and
\begin{equation}\label{mul4}
\begin{aligned}
& \int_{B_1(0)}\eta^2  |h(x)|| G_L(u)|\, dx \le  \int_{B_1(0)}|h(x)| [\eta H_L(u)]^{\frac{2(q-1)}{q}}  \, dx \\
&\leq K \( \int_{B_1(0)}|h(x)|^{\frac{Nq}{N-2+2q}} \ dx \)^{\frac{N-2+2q}{Nq}}
 \( \int_{B_1(0)}|\nabla (\eta H_L(u))|^2\, dx \)^{\frac{q-1}{q}}
\end{aligned}
\end{equation}
in view of \eqref{propGLHL}. Plugging \eqref{mul3}-\eqref{mul4} into \eqref{mul2} by \eqref{propGLHL} we get
\begin{eqnarray*}
&& \[\frac{4(q-1)}{q^2}-K\epsilon^{\frac{4}{N-2}}\] \int_{B_1(0)}|\nabla (\eta H_L(u))|^2\, dx -\gamma^+\int_{B_1(0)}\frac{1}{|x|^2}[\eta H_L(u)]^2\, dx\\
&&\le  K \int_{B_{\rho_1}(0)}[H_L^2(u)+J_L(u)]\, dx+
K \( \int_{B_1(0)}|h(x)|^{\frac{Nq}{N-2+2q}} \ dx \)^{\frac{N-2+2q}{Nq}}
 \( \int_{B_1(0)}|\nabla (\eta H_L(u))|^2\, dx \)^{\frac{q-1}{q}}
\end{eqnarray*}
where $\gamma^+=\max\{\gamma, 0\}$. By the Hardy inequality we finally deduce that
\begin{equation}\label{1124}
\begin{aligned}
& \[\frac{4(q-1)}{q^2}-K \varepsilon^{\frac{4}{N-2}} -\frac{4 \gamma^+}{(N-2)^2} \] \int_{B_1(0)}|\nabla (\eta H_L(u))|^2\, dx \leq K \int_{B_{\rho_1}(0)}[H_L^2(u)+J_L(u)]\, dx\\
&+
K \( \int_{B_1(0)}|h(x)|^{\frac{Nq}{N-2+2q}} \ dx \)^{\frac{N-2+2q}{Nq}}
 \( \int_{B_1(0)}|\nabla (\eta H_L(u))|^2\, dx \)^{\frac{q-1}{q}}
\end{aligned}. 
\end{equation}
Since $\frac{4(q-1)}{q^2}>\frac{4 \gamma}{(N-2)^2}$, for $\epsilon $ small we can assume that $\frac{4(q-1)}{q^2}-K \varepsilon^{\frac{4}{N-2}} -\frac{4 \gamma+}{(N-2)^2}>0$. By \eqref{1124} we deduce that
\begin{equation}\label{1125}
\begin{aligned}
&\(\int_{B_1(0)}|\eta H_L(u)|^{\frac{2N}{N-2}}\, dx\)^{\frac{N-2}{N}} \leq K \int_{B_1(0)}|\nabla (\eta H_L(u))|^2\, dx \\
& \leq K \int_{B_{\rho_1}(0)}[H_L^2(u)+J_L(u)]\, dx+
K \( \int_{B_1(0)}|h(x)|^{\frac{Nq}{N-2+2q}} \ dx \)^{\frac{N-2+2q}{N}}
\end{aligned}\end{equation}
in view of $\frac{q-1}{q}<1$ and the Sobolev inequality. Since $0\leq J_L(t)\leq t G_L(t)\leq H_L^2(t) \leq |t|^q$ does hold for all $t \in \mathbb R$ in view of \eqref{propGLHL}, by \eqref{1125} we get that
$$\(\int_{B_{\rho_2}(0)}|H_L(u)|^{\frac{2N}{N-2}}\, dx\)^{\frac{N-2}{N}} \leq K \int_{B_{\rho_1}(0)}|u|^q
\, dx+K \( \int_{B_{\rho_1}(0)}|h(x)|^{\frac{Nq}{N-2+2q}} \ dx \)^{\frac{N-2+2q}{N}}.$$
Taking the power $\frac{1}{q}$ and letting $L\to+\infty$ by the Fatou's Lemma we obtain the validity of \eqref{11255}.

\medskip \noindent \underline{{\bf $2^{\hbox{nd}}$ Claim}}: Let $1\leq q <Q$, $M>0$ and $\tau< \beta_-+2$, where
$$Q=\left\{\begin{array}{ll} +\infty& \hbox{if }\gamma\leq 0\\ \frac{N}{\beta_-}& \hbox{if }\gamma>0. \end{array} \right. $$ 
There exist $\epsilon,K>0$ so that 
\begin{equation}\label{112557}
|u|_{q,B_{\frac{1}{2}}(0)} \leq K \[ |u|_{\frac{2N}{N-2},B_1(0)}+1\]
\end{equation}
does hold for any solution $u$ of \eqref{operatore} so that \eqref{ipotesiastratto}-\eqref{iph} are valid.

\medskip \noindent Indeed, notice that for $\gamma>0$ the property $\frac{4(q-1)}{q^2}>\frac{4 \gamma}{(N-2)^2}$, $q>2$, is equivalent to $2<q<\frac{N-2}{\bm}=\frac{N-2}{N}Q$. Since
$$\sup_{q \in [1,\frac{N-2}{N}Q)} \frac{Nq}{N-2+2q}=\left\{ \begin{array}{ll} \frac{N}{2}& \hbox{if }\gamma\leq 0\\
\frac{N}{\beta_-+2}& \hbox{if }\gamma>0 \end{array} \right.< \frac{N}{\tau}$$
if $\tau<\beta_-+2$, we have that
\begin{equation} \label{1227}
|h|_{\frac{Nq}{N-2+2q},B_1(0)} \leq K(M,\tau)
\end{equation}
for any $q \in [1,\frac{N-2}{N}Q)$, $\tau<\tau_0$ and $h$ satisfying \eqref{iph}. Let $q_j=(\frac{N-2}{N})^j q$, $j \in \mathbb N$, and $r_j$ be any decreasing sequence so that $r_0=1$ and $r_k=\frac{1}{2}$. Since $q_j \to 0$ as $j \to +\infty$, we can find a smallest index $k \in \mathbb N$ so that $q_k\leq \frac{2N}{N-2}$. Notice that $q_j \leq q_1<\frac{N-2}{N} Q$ for all $j \geq 1$ and $q_k>2$ in view of $q_{k-1}>\frac{2N}{N-2}$. We can apply the $1^{\hbox{st}}$ Claim with $q_j$ between $r_{j+1}$ and $r_j$ for $j=1,\ldots,k-1$ and obtain by \eqref{1227} that for $\epsilon>0$ small
\begin{equation}\label{2144}
|u|_{q,B_{\frac{1}{2}}(0)} \leq K \[ |u|_{q_k,B_1(0)}+1\]
\end{equation}
does hold for some $K>0$. We can conclude in view of $q_k \leq \frac{2N}{N-2}$ and
$$|u|_{q_k,B_1(0)}\leq \omega_N^{\frac{2N-(N-2)q_k}{2Nq_k}} |u|_{\frac{2N}{N-2},B_1(0)}.$$

\medskip \noindent 
\underline{{\bf $3^{\hbox{rd}}$ Claim}}: Let $\frac{2N}{N-2}< q <Q$, $M>0$ and $\tau< \beta_-+2$. There exist $\epsilon,K>0$ so that 
\begin{equation}\label{0843}
\sup_{x \in B_{\frac{1}{4}}(0)}|x|^{\frac{N}{q}}|u(x)| \leq K
\end{equation}
does hold for any solution $u$ of \eqref{operatore} so that \eqref{ipotesiastratto}-\eqref{iph} are valid.

\medskip \noindent Given $\frac{2N}{N-2}< q <Q$, $M>0$ and $\tau< \beta_-+2$, choose $\epsilon>0$ small so that the $2^{\hbox{nd}}$ Claim applies. The function $U(y)=|x|^{\frac{N}{q}} u(|x|y)$ satisfies
$$-\Delta U-\frac{\gamma}{|y|^2}U=|x|^2 a(|x|y) U+|x|^{2-\frac{4N}{q(N-2)}}  |U|^{\frac{4}{N-2}} U
+|x|^{\frac{N}{q}+2} h(|x|y)\quad \hbox{in }B_2(0)\setminus B_{\frac{1}{2}}(0),$$
where  
\begin{equation} \label{0901}
\big| |x|^2 a(|x|y) U+|x|^{2-\frac{4N}{q(N-2)}}  |U|^{\frac{4}{N-2}} U \big|\leq
\frac{M}{16} |U|+4^{\frac{4N}{q(N-2)}-2} |U|^{\frac{N+2}{N-2}}
\end{equation}
and
\begin{equation} \label{0902}
|x|^{\frac{N}{q}+2} |h(|x|y)| \leq |x|^{\frac{N}{q}+2-\tau} \frac{M}{|y|^\tau}
\leq 4^{2 \tau-\frac{N}{q}-2} M
\end{equation}
for any $|x|\leq \frac{1}{4}$  and $\frac{1}{2}\leq |y|\leq 2$, in view of $\frac{N}{q}+2-\tau>\frac{N}{Q}+2-\tau \geq \beta_-+2-\tau>0$. Since
$$|U|_{q,B_2(0)\setminus B_1(0)}\leq |u|_{q,B_{\frac{1}{2}}(0)},$$
by \eqref{0901}-\eqref{0902} standard elliptic estimates apply for any $\tilde q\geq q>\frac{2N}{N-2}$ and through a bootstrap argument yield the validity of \eqref{0843} for some universal constant $K>0$. 

\medskip \noindent  To conclude the proof, let us rewrite \eqref{operatore} as 
\begin{equation} \label{operatore1}
-\Delta u -\frac{\gamma+\tilde a(x)}{|x|^2} u=h(x), \quad \tilde a(x)=|x|^2 a(x)+|x|^2 |u(x)|^{\frac{4}{N-2}}.
\end{equation}
Since $\frac{4N}{Q(N-2)}<2$, by $3^{\hbox{rd}}$ Claim and \eqref{iph} it follows that there exists $C_0,\theta>0$ such that 
\begin{equation} \label{0957}
|\tilde a(x)|\leq C_0 |x|^\theta
\end{equation}
for any $|x|\leq \frac{1}{4}$. Since $\tau<\beta_-+2$, we can fix $\alpha$ so that $\bm-\theta<\alpha<\bm$ and $\alpha>\tau-2$. Then we can find $\rho>0$ small so that $\Phi(x)= |x|^{-\bm}-|x|^{-\alpha}\geq 
\frac{1}{2}|x|^{-\bm}$ in $B_\rho(0)$ and satisfies
$$-\Delta \Phi-\frac{\gamma+\tilde a}{|x|^2} \Phi =\frac{\alpha^2-\alpha(N-2)+\gamma}{|x|^{\alpha+2}} - \frac{\tilde a}{|x|^2}\Phi \geq
\frac{\alpha^2-\alpha(N-2)+\gamma}{|x|^{2+\alpha}}- \frac{C_0}{|x|^{\bm+2-\theta}} \geq \frac{M}{|x|^\tau}$$
in $B_\rho(0)$ in view of $\alpha^2 -\alpha(N-2)+\gamma>0$. 
Since $|u(x)|\leq K \Phi(x)$ for some $K \geq 1$ and any $x \in \partial B_\rho(0)$ in view of \eqref{0843}, by \eqref{iph} we can use $K \Phi$ as a supersolution of \eqref{operatore1} with $h$ and $-h$ to get by the maximum principle $|u(x)|\leq K \Phi(x)\leq K |x|^{-\beta_-}$ for any $x \in B_\rho(0)$, as desired.
\end{proof}

\subsection{The reduced energy: end of the proof for Proposition \ref{prob-rido}}
Let us first show that $\widetilde{J}_\eps$ has the same expansion as $J_\eps.$ Setting $u_\ell=(-1)^\ell PU_\ell +\phi_{\ell, \eps}$, $\ell=1,\ldots, k$, we have that
\begin{equation}
\label{funz}
\begin{aligned} 
\widetilde{J}_\e ( \mu_1,\ldots,\mu_k) & =J_\e (\mu_1,\ldots,\mu_k)
+\sum_{\ell,i=1}^k  \[\langle u_\ell , \phi_{i, \eps} \rangle - \eps \int_\Omega u_\ell  \phi_{i, \eps} \, dx \] -\frac 12 \| \sum_{\ell=1}^k \phi_{\ell, \eps}\|^2\\
& +\frac \eps2 |\sum_{\ell=1}^k \phi_{\ell, \eps}|_2  -\frac{N-2}{2N}\int_\Omega \[ |\sum_{\ell=1}^k u_\ell|^{\frac{2N}{N-2}}
-|\sum_{\ell=1}^k (-1)^\ell PU_\ell |^{\frac{2N}{N-2}}  \]\, dx
\end{aligned}
\end{equation}
in view $\langle u+v,u+v \rangle =\langle u, u \rangle -\langle v,v \rangle+2 \langle u+v,v\rangle$ for any bi-linear form $\langle\cdot,\cdot \rangle$. By multiplying  \eqref{18222} against $\phi_{i, \eps}\in \mathcal K_\ell^\bot$, $i \geq \ell$, we get that
\begin{equation*}
\begin{aligned}
 \langle u_\ell , \phi_{i, \eps} \rangle - \eps \int_\Omega u_\ell  \phi_{i, \eps} \, dx = \int_\Omega \[ f (\sum_{j=1}^\ell u_j)-f (\sum_{j=1}^{\ell-1} u_j )\]\phi_{i, \eps}\, dx
\end{aligned}
\end{equation*}
for any $i \geq \ell$. Therefore,  \eqref{funz} reads as
\begin{equation} \label{12377}
\begin{aligned}
\widetilde{J}_\e (\mu_1,\ldots,\mu_k)=& J_\e(\mu_1,\ldots,\mu_k)+\sum_{i<\ell}(-1)^\ell \[\langle PU_\ell,\phi_{i, \eps}\rangle-\eps \int_\Omega  PU_\ell \phi_{i, \eps} \ dx\]\\
& -\frac 12 \sum_{\ell=1}^k\|\phi_{\ell, \eps}\|^2 +\frac \e 2 \sum_{\ell=1}^k|\phi_{\ell, \eps}|_2^2+\sum_{i \geq \ell}\int_\Omega \[f (\sum_{j=1}^\ell u_j) -f (\sum_{j=1}^{\ell-1} u_j)\]
\phi_{i, \eps}\, dx \\
& -\frac{N-2}{2N} \int_\Omega \[|\sum_{\ell=1}^k u_\ell|^{\frac{2N}{N-2}} -|\sum_{\ell=1}^k (-1)^\ell PU_\ell|^{\frac{2N}{N-2}} \]\, dx.
\end{aligned}
\end{equation}
Setting
\begin{equation*}
\begin{aligned}
\widetilde{\Upsilon}_\ell  &=
(-1)^\ell \( \langle PU_\ell, \sum_{i=1}^{\ell-1}  \phi_{i, \eps}\rangle-\eps \int_\Omega  PU_\ell (\sum_{i=1}^{\ell-1} \phi_{i, \eps})\) -\frac{1}{2}\|\phi_{\ell,\eps}\|^2+\frac{\eps}{2}|\phi_{\ell,\eps}|_2^2+\int_\Omega f (\sum_{j=1}^\ell u_j)  \phi_{\ell, \eps}\, dx\\
&- \frac{N-2}{2N}  \int_\Omega \[ |\sum_{j=1}^{\ell}u_j|^{\frac{2N}{N-2}}
-|\sum_{j=1}^{\ell-1}u_j|^{\frac{2N}{N-2}}-|\sum_{j=1}^{\ell}(-1)^j PU_j|^{\frac{2N}{N-2}}
+|\sum_{j=1}^{\ell-1}(-1)^j PU_j|^{\frac{2N}{N-2}}\],
\end{aligned}
\end{equation*}
by \eqref{12377} we have that
$$\widetilde{J}_\e (\mu_1,\ldots,\mu_k)=J_\e(\mu_1,\ldots,\mu_k)+\sum_{\ell=1}^k \widetilde{\Upsilon}_\ell$$
in view of
$$\sum_{i \geq \ell}\int_\Omega \[f (\sum_{j=1}^\ell u_j) -f (\sum_{j=1}^{\ell-1} u_j)\]
\phi_{i, \eps}\, dx =
\sum_{i=1}^k \int_\Omega f (\sum_{j=1}^i u_j)  \phi_{i, \eps}\, dx.$$
Since for $\ell\geq 2$
\begin{equation*}
\begin{aligned}
&(-1)^\ell \( \langle PU_\ell, \sum_{i=1}^{\ell-1}  \phi_{i, \eps}\rangle-\eps \int_\Omega  PU_\ell (\sum_{i=1}^{\ell-1} \phi_{i, \eps})\) \\
&= (-1)^\ell  \int_\Omega PU_\ell^{\frac{N+2}{N-2}} ( \sum_{i=1}^{\ell-1}  \phi_{i, \eps}) \, dx +O\(  | U_\ell^{\frac{N+2}{N-2}}-(PU_\ell)^{\frac{N+2}{N-2}}|_{\frac{2N}{N+2}}+\e |P U_\ell |_{\frac{2N}{N+2}}\)  \ \sum_{i=1}^{\ell-1} \|\phi_{i, \e}\| \\
&=(-1)^\ell \int_\Omega PU_\ell^{\frac{N+2}{N-2}} (\sum_{i=1}^{\ell-1}  \phi_{i, \eps}) \, dx+o\( (\frac{\mu_{\ell}}{\mu_{\ell-1}})^\Gamma +\eps \mu_\ell^2 \)
\end{aligned}
\end{equation*}
in view of \eqref{stimaphi1} and \eqref{114}-\eqref{112} with $\mu_1$ replaced by $\mu_\ell$, we have that
\begin{equation} \label{114pp}
\begin{aligned}
\widetilde{\Upsilon}_1 =- \frac{N-2}{2N}  \int_\Omega \widetilde{\upsilon}_1 \ dx+O(\|\mathcal E_1\|^2), \quad
\widetilde{\Upsilon}_\ell  =- \frac{N-2}{2N}  \int_\Omega \widetilde{\upsilon}_\ell \ dx+o\( (\frac{\mu_{\ell}}{\mu_{\ell-1}})^\Gamma +\eps \mu_\ell^2 \)
\end{aligned}
\end{equation}
for any $\ell=2,\ldots,k$, where
\begin{eqnarray*}
\widetilde{\upsilon}_\ell &=& |\sum_{j=1}^{\ell}u_j|^{\frac{2N}{N-2}}
-|\sum_{j=1}^{\ell-1}u_j|^{\frac{2N}{N-2}}
-|\sum_{j=1}^{\ell}(-1)^j PU_j|^{\frac{2N}{N-2}}
+|\sum_{j=1}^{\ell-1}(-1)^j PU_j|^{\frac{2N}{N-2}}\\
&&-\frac{2N}{N-2} \[f(\sum_{j=1}^\ell u_j)\phi_{\ell,\eps}+(-1)^\ell (PU_\ell)^{\frac{N+2}{N-2}} (\sum_{i=1}^{\ell-1} \phi_{i,\eps}) \].
\end{eqnarray*}
By \eqref{18355} and \eqref{18377} we have the expansion
\begin{equation} \label{13000}
\begin{aligned}
\widetilde{\upsilon}_\ell &= |\sum_{j=1}^\ell (-1)^j PU_j+\sum_{j=1}^{\ell-1} \phi_{j,\eps}|^{\frac{2N}{N-2}}
-|\sum_{j=1}^{\ell-1} u_j|^{\frac{2N}{N-2}}
-|\sum_{j=1}^{\ell} (-1)^j PU_j|^{\frac{2N}{N-2}} \\
&+|\sum_{j=1}^{\ell-1}(-1)^j PU_j|^{\frac{2N}{N-2}}
-\frac{2N}{N-2} (-1)^\ell (PU_\ell)^{\frac{N+2}{N-2}} (\sum_{i=1}^{\ell-1} \phi_{i,\eps})\\
&+ O\(|\phi_{\ell,\eps} |^{\frac{2N}{N-2}}+\sum_{j=1}^\ell (PU_j)^{\frac{4}{N-2}} \phi_{\ell,\eps}^2 +\sum_{j=1}^{\ell-1} |\phi_{j,\eps}|^{\frac{4}{N-2}} \phi_{\ell,\eps}^2\).
\end{aligned}
\end{equation}
We have that
\begin{equation} \label{1829}
 \widetilde{\Upsilon}_1=O(\|\mathcal E_1\|^2)=o(\mu_1^{2\Gamma})
\end{equation}
in view of \eqref{mujbis}-\eqref{muj2} and \eqref{114pp}.

\medskip \noindent Let us now discuss the case $\ell\geq 2$. Given $\mathcal A_h$ as in \eqref{anelli}, by \eqref{18355} and \eqref{18377}  for $h=0,\ldots,\ell-1$ we have 
\begin{equation}\label{14032}
\begin{aligned}
| \widetilde{\upsilon}_\ell |_{1,\mathcal A_h} & \leq c \Big| U_l^{\frac{2N}{N-2}}+U_l^2 \sum_{j=1}^{\ell-1}[U_j^{\frac{4}{N-2}}+|\phi_{j,\eps}|^{\frac{4}{N-2}}]
+U_\ell^{\frac{N+2}{N-2}}  \sum_{j=1}^{\ell-1}  |\phi_{j,\eps}|  \Big|_{1,\mathcal A_h}\\
&+c \Big| [f (\sum_{j=1}^{\ell-1} u_j)-f(\sum_{j=1}^{\ell-1} (-1)^j PU_j) ] U_\ell \Big|_{1,\mathcal A_h}
+O(\|\phi_{\ell,\eps}\|^2)\\
&\le 
c  \sum_{j=1}^{\ell-1} \Big|U_\ell |\phi_{j,\eps}|^{\frac{N+2}{N-2}} 
\Big|_{1,\mathcal A_h}
+ \sum_{i,j=1}^{\ell-1} \Big| U_\ell U_i^{\frac{4}{N-2}} |\phi_{j,\eps}| \Big|_{1,\mathcal A_h}+o\( (\frac{\mu_{\ell}}{\mu_{\ell-1}})^\Gamma +\eps \mu_\ell^2 \)
\end{aligned}
\end{equation}
and
\begin{equation}\label{14033}
\begin{aligned}
| \widetilde{\upsilon}_\ell |_{1,\mathcal A_\ell}  & \leq c  \Big|   \sum_{i=1}^{\ell} \sum_{j=1}^{\ell-1} U_i^{\frac{4}{N-2}}  \phi_{j,\eps}^2
+\sum_{j=1}^{\ell-1} |\phi_{j,\eps}|^{\frac{2N}{N-2}} 
+ \sum_{i,j=1}^{\ell-1} U_i^{\frac{N+2}{N-2}}  |\phi_{j,\eps}|
\Big|_{1,\mathcal A_\ell}\\
&+c \Big| [f (\sum_{j=1}^{\ell} (-1)^j PU_j )-f((-1)^\ell PU_\ell) ]  
\sum_{j=1}^{\ell-1} \phi_{j,\eps} \Big|_{1,\mathcal A_\ell}
+O(\|\phi_{\ell,\eps}\|^2)\\
&\le c  \Big| U_\ell^{\frac{4}{N-2}}  \sum_{j=1}^{\ell-1}   \phi_{j,\eps}^2
+\sum_{j=1}^{\ell-1} |\phi_{j,\eps}|^{\frac{2N}{N-2}} 
\Big|_{1,\mathcal A_\ell}+o\( (\frac{\mu_{\ell}}{\mu_{\ell-1}})^\Gamma +\eps \mu_\ell^2 \)
\end{aligned}
\end{equation}
in view of \eqref{stimaphi1}, \eqref{ok1}-\eqref{ok2222} and for any $i,j=1,\ldots,\ell-1$
\begin{eqnarray*}
&& \hspace{-0.6 cm}
| U_l^2 U_j^{\frac{4}{N-2}}  |_{1,\mathcal A_h}=O( |  U_j^{\frac{4}{N-2}} U_\ell |_{\frac{2N}{N+2},\mathcal A_h} |U_\ell |_{\frac{2N}{N-2},\mathcal A_h}),\   U_\ell^2 |\phi_{j,\eps}|^{\frac{4}{N-2}}+U_\ell^{\frac{N+2}{N-2}}|\phi_{j,\eps}|=O(U_\ell |\phi_{j,\eps}|^{\frac{N+2}{N-2}}+U_\ell^{\frac{2N}{N-2}}),\\
&& \hspace{-0.6 cm} U_i^{\frac{4}{N-2}} \phi_{j,\eps}^2+U_i^{\frac{N+2}{N-2}}  |\phi_{j,\eps}|=O(|\phi_{j,\eps}|^{\frac{2N}{N-2}}+U_i^{\frac{2N}{N-2}}), \
U_\ell^{\frac{4}{N-2}}  U_i \phi_{j,\eps} =O(
[U_\ell^{\frac{4}{N-2}}  U_i]^{\frac{2N}{N+2}}  +|\phi_{j,\eps} |^{\frac{2N}{N-2}}).
\end{eqnarray*}
Notice in the estimate \eqref{14032} we couple the first two and the second two terms in the expression \eqref{13000} of $\widetilde{\upsilon}_\ell$, while in the estimate \eqref{14033} the first/second term  is coupled with the third/fourth one in \eqref{13000}.

\medskip \noindent For $j=1,\ldots,\ell-1 $  there holds $\mathcal A_\ell \subset B_{ \rho \mu_j}(0)$
and by \eqref{stimalinfty} we deduce that 
\begin{equation} \label{15007}
\begin{aligned}
& \Big| U_\ell^{\frac{4}{N-2}}    \phi_{j,\eps}^2+|\phi_{j,\eps}|^{\frac{2N}{N-2}}  \Big|_{1,\mathcal A_\ell} \leq  \frac{c}{\mu_j^{2\Gamma}}  \int_{\mathcal A_\ell} \frac{U_\ell^{\frac{4}{N-2}}}{|x|^{2\beta_-}} \ dx
+\frac{c}{\mu_j^{\frac{2N}{N-2}\Gamma}}  \int_{\mathcal A_\ell}
\frac{dx}{|x|^{\frac{2N}{N-2}\beta_-}} \\
& \leq c (\frac{\mu_\ell}{\mu_j})^{2\Gamma}  \int_{\frac{\mathcal A_\ell}{\mu_\ell}} \frac{U^{\frac{4}{N-2}}}{|y|^{2\beta_-}} \ dy
+c  (\frac{\mu_\ell}{\mu_{\ell-1}})^{\frac{N}{N-2}\Gamma} 
=O \( (\frac{\mu_\ell}{\mu_{\ell-1}})^{\frac{N}{N-2}\Gamma} \log \frac{1}{\mu_\ell} \).
\end{aligned}
\end{equation}
For $h=0,\ldots,\ell-2$ by \eqref{stimaphi1} and \eqref{ok1} we deduce
\begin{equation} \label{17000bis}
\begin{aligned}
|| \phi_{j,\eps}|^{\frac{N+2}{N-2}} U_\ell  |_{1, \mathcal A_h} +|U_\ell U_i^{\frac{4}{N-2}} \phi_{j,\eps}  |_{1, \mathcal A_h} &
\le c \|\phi_{j,\eps}\|^{\frac{N+2}{N-2}}   | U_\ell |_{\frac{2N}{N-2}, \mathcal A_h}
+ c \|\phi_{j,\eps}\| |U_\ell |_{\frac{2N}{N-2},\mathcal A_h} \\
& =o \left( (\frac{\mu_\ell}{\sqrt{\mu_h \mu_{h+1}}})^{\Gamma} \right)= o\( (\frac{\mu_\ell}{\mu_{\ell-1}})^\Gamma \)
\end{aligned}
\end{equation}
for any $i,j=1,\ldots,\ell-1$. Splitting $\mathcal A_{\ell-1}$ as $\mathcal A'_{\ell-1}\cup \mathcal A''_{\ell-1}$, where $\mathcal A'_{\ell-1}=\mathcal A_{\ell-1}\cap B_{ \rho \mu_{\ell-1}}(0)$ and $\mathcal A''_{\ell-1}=\mathcal A_{\ell-1}\setminus B_{ \rho \mu_{\ell-1}}(0)$, by \eqref{stimaphi1}
and \eqref{stimalinfty} we get that
\begin{equation} \label{17002bis}
\begin{aligned}
& | |\phi_{j,\eps}|^{\frac{N+2}{N-2}} U_\ell |_{1, \mathcal A_{\ell-1}}  
+|U_\ell U_i^{\frac{4}{N-2}} \phi_{j,\eps}  |_{1, \mathcal A_{\ell-1}}\\
& \le  c \|\phi_{j,\eps}\|^{\frac{2N}{(N-2)(N-1)}}   | \phi_{j,\eps}^{\frac{N+1}{N-1}} U_\ell |_{\frac{N-1}{N-2}, \mathcal A'_{\ell-1}}
+ c \|\phi_{j,\eps}\|^{\frac{2N}{(N-2)(5N-9)}} |  U_\ell U_i^{\frac{4}{N-2}} \phi_{j,\eps}^{\frac{5N^2-21N+18}{(N-2)(5N-9)}}  |_{\frac{5N-9}{5N-10},\mathcal A'_{\ell-1}}\\
&+c \[ \|\phi_{j,\eps} \|^{\frac{N+2}{N-2}}+\|\phi_{j,\eps}\| \] \left| U_\ell \right|_{\frac{2N}{N-2}, \mathcal A''_{\ell-1} }\\
&=o\[ \frac{\mu_\ell^\Gamma}{\mu_j^{\frac{N+1}{N-1}\Gamma}} (\int_{\mathcal A'_{\ell-1}} \frac{dx}{|x|^{N-\frac{2\Gamma}{N-2}}})^{\frac{N-2}{N-1}}+
\frac{\mu_\ell^\Gamma}{\mu_i^{\frac{4\Gamma}{N-2}} \mu_j^{\frac{5N^2-21N+18}{(N-2)(5N-9)}\Gamma}  }  (\int_{\mathcal A'_{\ell-1}} \frac{dx}{|x|^{N- \frac{18\Gamma}{5(N-2)} }})^{\frac{5N-10}{5N-9}}\]\\
&+o\(  (\frac{\mu_\ell}{\mu_{\ell-1}})^{\Gamma} \)=o\(  (\frac{\mu_\ell}{\mu_{\ell-1}})^{\Gamma} \)
\end{aligned}
\end{equation}
for any $i,j=1,\ldots,\ell-1$ in view of \eqref{ok31}. Inserting \eqref{17000bis}-\eqref{17002bis} into \eqref{14032} and \eqref{15007} into \eqref{14033} we deduce that $|\widetilde{\upsilon}_\ell |_{1,\mathcal A_h}=o\(  (\frac{\mu_\ell}{\mu_{\ell-1}})^{\Gamma} +\eps \mu_\ell^2 \)$ for any $h=0,\ldots, \ell$ and then
\begin{equation} \label{1830}
\widetilde{\Upsilon}_\ell =o\(  (\frac{\mu_\ell}{\mu_{\ell-1}})^{\Gamma}\)
\end{equation}
for any $\ell \geq 2$ in view of \eqref{mujbis}-\eqref{muj2} and \eqref{114pp}. Thanks to \eqref{1829} and \eqref{1830} we have established that $\widetilde{\Upsilon}_\ell$ satisfies the same estimate as $\Upsilon_\ell$, $\ell =1,\ldots,k$.

\medskip \noindent To conclude the proof of Proposition \ref{prob-rido}, let us show that, if $(d_1,\dots,d_k)$ is a critical point of $\widetilde J_\e$, then $ \displaystyle \sum_{\ell=1}^k (-1)^\ell PU_\ell +\Phi_{ \e}$ is a critical point of the functional $J_\e.$
 Assume that
$$
0=\partial_{d_h}\widetilde J_\e(d_1,\dots,d_k)=\nabla J_\e\( \sum_{\ell=1}^k  (-1)^\ell PU_\ell +\Phi_{\e} \) \[(-1)^h \partial _{d_h}  PU_h+\partial_{d_h} \Phi_\eps \]$$
for any $h=1,\dots,k$. Since
$$\nabla J_\e\( \sum_{\ell=1}^k (-1)^\ell PU_\ell +\Phi_{\e} \) =\sum\limits_{j=1}^k \lambda_{j,\e}P Z _j,$$
we get that
\begin{equation} \label{15347} 0=\sum\limits_{j=1}^k \lambda_{j,\e}\< P Z _j,(-1)^h \partial _{d_h}  PU_h+\partial_{d_h} \Phi_\e \>
\end{equation}
for any $h=1,\dots,k$. 
Since by \eqref{czero} there hold
$$\|P Z _h\|^2=c_0+o(1),\qquad \<P Z _j, PZ _h\>=o(1)\ \forall \ j\not=h,$$
we have that
$$\<P Z _j, \partial_{d_h}\Phi_\e \>=\sum_{\ell=h}^k \<P Z _j, \partial_{d_h}\phi_{\ell,\e} \>=
O\( \sum_{\ell=h}^k \|PZ _j\| \ \|\partial_{d_h}\phi_{\ell,\eps}\|\)=o\(1\).$$
in view of \eqref{stimaphi2}. Since
$$\partial _{d_h} PU_h=-\Gamma \alpha_N P Z _h \times \left\{ \begin{array}{ll}-\frac{1}{\e}&\hbox{if }\Gamma=1\\
{1\over d_h} &\hbox{if }\Gamma>1, \end{array} \right.$$
by \eqref{15347} we deduce that $\lambda_{j,\e}=0$ for any $j=1,\ldots,k$, or equivalently
$$\nabla J_\e\( \sum_{\ell=1}^k (-1)^\ell PU_\ell +\Phi_{\e} \) =0.$$
Then $ \displaystyle \sum_{\ell=1}^k (-1)^\ell PU_\ell +\Phi_{ \e}$ is a critical point of the functional $J_\e$ and the proof of Proposition \ref{prob-rido} is complete.


\begin{thebibliography}{999}
\bibitem{AdYa} Adimurthi, S.L. Yadava, {\em  Elementary proof of the nonexistence of nodal solutions for the semilinear elliptic equations with critical Sobolev exponent}, Nonlinear Anal. 14 (1990), no. 9, 785--787.

\bibitem{ABP}F.V. Atkinson, H. Br\'ezis, L. Peletier, {\em Nodal solutions of elliptic equations with critical Sobolev exponents} J. Differential Equations 85 (1990), no. 1, 151--170.
	
\bibitem{BrNi} H. Br\'ezis, L. Nirenberg, {\em  Positive solutions of nonlinear elliptic equations involving critical Sobolev exponents}, Comm. Pure Appl. Math. 36 (1983), no. 4, 437--477.

\bibitem{CGS} L.A.  Caffarelli, B. Gidas, J. Spruck, {\em Asymptotic symmetry and local behavior of semilinear elliptic equations with critical Sobolev growth}, Comm. Pure Appl. Math. 42 (1989), no. 3, 271--297.

\bibitem{CaHa}  D. Cao, P. Han, {\em Solutions for semilinear elliptic equations with critical exponents and Hardy potential}, J. Differential Equations 205 (2004), no. 2, 521--537.

\bibitem{CaPe} D. Cao, S. Peng, {\em A note on the sign-changing solutions to elliptic problems with critical Sobolev and Hardy terms}, J. Differential Equations 193 (2003), no. 2, 424--434.

\bibitem{CaYa} D. Cao, S. Yan, {\em Infinitely many solutions for an elliptic problem involving critical Sobolev growth and Hardy potential}, Calc. Var. Partial Differential Equations 38 (2010), 471--501.

\bibitem{cl} F. Catrina, R. Lavine, {\em Radial solutions for weighted semilinear equations}, Commun. Contemp. Math. 4 (2002), no. 3, 529--545.

\bibitem{CaWa} F. Catrina, Z.-Q. Wang, {\em On the Caffarelli-Kohn-Nirenberg inequalities: sharp constants, existence (and nonexistence), and symmetry of extremal functions}, Comm. Pure Appl. Math. 54 (2001), no. 2, 229--258.

\bibitem{Che} J. Chen, {\em Existence of solutions for a nonlinear PDE with an inverse square potential}, J. Differential Equations 195 (2003), 497--519.

\bibitem{cz} Z. Chen, W. Zou, {\em On an elliptic problem with critical exponent and Hardy potential}, J. Differential Equations 252 (2012), no. 2, 969--987. 

\bibitem{ChCh} K.S. Chou, C.W. Chu, {\em On the best constant for a weighted Sobolev-Hardy inequality}, J. London Math. Soc. (2) 48 (1993), no. 1, 137--151.

\bibitem{dgg} N. Dancer, F. Gladiali, M. Grossi, {\em On the Hardy-Sobolev equation}, Proc. Roy. Soc. Edinburgh Sect. A 147 (2017), no. 2, 299--336. 
   
\bibitem{Dru} O. Druet, {\em  Elliptic equations with critical Sobolev exponents in dimension $3$}, Ann. Inst. H. Poincar\'e Anal. Non Lin\'eaire 19 (2002), no. 2, 125--142.

\bibitem{Esp} P.  Esposito, {\em On some conjectures proposed by Ha\"im Brezis}, Nonlinear Anal. 56 (2004), no. 5, 751--759.

\bibitem{fg} A. Ferrero, F. Gazzola, {\em Existence of solutions for singular critical growth semilinear elliptic equations}, J. Differential Equations 177 (2001), no. 2, 494--522.

\bibitem{GR1} N. Ghoussoub, F. Robert, {\em Concentration estimates for Emden-Fowler equations with boundary singularities and critical growth}, IMRN Int. Math. Res. Pap. (2006), 21867, 1--85. 

\bibitem{gr} N. Ghoussoub, F. Robert, {\em Sobolev inequalities for the Hardy-Schr\"odinger operator: extremals and critical dimensions}, Bull. Math. Sci. 6 (2016), no. 1, 89--144.

\bibitem{GR} N. Ghoussoub, F. Robert, {\em  Hardy-Singular Boundary Mass and Sobolev-Critical Variational Problems},  Anal. PDE 10 (2017), no. 5, 1017--1079.
	
\bibitem{GhRo} N. Ghoussoub, F. Robert, {\em The Hardy-Schr\"odinger operator with interior singularity: the remaining cases}, preprint, arXiv:1612.08355v1 (2016) 48 pp.

\bibitem{GNN} B. Gidas, W.-M. Ni, L. Nirenberg, {\em Symmetry and related properties via the maximum principle}, Comm. Math. Phys. 68 (1979), no. 3, 209--243.

\bibitem{HLP}G.H. Hardy, J.E. Littlewood, G. P\'olya. Inequalities. 2d ed. Cambridge, at the University Press, 1952. xii+324 pp. 

\bibitem{Iac} A. Iacopetti, {\em  Asymptotic analysis for radial sign-changing solutions of the Brezis-Nirenberg problem}, Ann. Mat. Pura Appl. (4) 194 (2015), no. 6, 1649--1682.

\bibitem{IaVa} A.  Iacopetti, G. Vaira, {\em Sign-changing tower of bubbles for the Brezis-Nirenberg problem}, Commun. Contemp. Math. 18 (2016), no. 1, 1550036, 53 pp.

\bibitem{Jan} E. Jannelli, {\em The role played by space dimension in elliptic critical problems}, J. Differential Equations 156 (1999), no. 2, 407--426.
 
\bibitem{MPV} F. Morabito, A. Pistoia, G. Vaira, {\em Towering Phenomena for the Yamabe Equation on Symmetric Manifolds}, Potential Anal. 47 (2017), no. 1, 53--102. 

\bibitem{mp} M. Musso, A. Pistoia, {\em Tower of bubbles for almost critical problems in general domains}, J. Math. Pures Appl. (9) 93 (2010), no. 1, 1--40.
	
\bibitem{Poh} S.J. Pohozaev, {\em Eigenfunctions of the equation $\Delta u+\lambda f(u)=0$}, Soviet Math. Dokl. 6 (1965), 1408--1411.

\bibitem{RuWi} D. Ruiz, M. Willem, {\em Elliptic problems with critical exponents and Hardy potentials}, J. Differential Equations 190 (2003), no. 2, 524--538. 

\bibitem{Ter} S. Terracini, {\em On positive entire solutions to a class of equations with a singular coefficient and critical exponent}, Adv. Differential Equations 1 (1996), no. 2, 241--264. 


\end{thebibliography}
\end{document}